\documentclass[11pt]{article}
\usepackage[T1]{fontenc}
\usepackage{lmodern}
\usepackage[a4paper,margin=29mm]{geometry}
\usepackage{amsmath,amssymb,amsthm,mathtools}
\usepackage{microtype}
\usepackage{enumitem}
\usepackage{needspace}
\newcommand{\rev}[1]{#1}
\newenvironment{revision}{\begingroup}{\par\endgroup}
\usepackage[title]{appendix}
\usepackage[hidelinks]{hyperref}
\usepackage{bookmark}
\usepackage{fancyhdr}
\fancypagestyle{plain}{\fancyhf{}\fancyfoot[C]{\thepage}}
\setlist[enumerate]{itemsep=3pt,topsep=5pt}
\setlist[itemize]{itemsep=3pt,topsep=5pt}
\allowdisplaybreaks[2]
\newtheorem{theorem}{Theorem}[section]
\newtheorem{proposition}[theorem]{Proposition}
\newtheorem{corollary}[theorem]{Corollary}
\newtheorem{lemma}[theorem]{Lemma}
\theoremstyle{definition}

\newtheorem{example}[theorem]{Example}

\theoremstyle{remark}
\newtheorem{remark}[theorem]{Remark}
\numberwithin{equation}{section}
\newcommand{\C}{\mathbb C}

\newcommand{\Z}{\mathbb Z}
\newcommand{\GL}{\operatorname{GL}}
\newcommand{\SL}{\operatorname{SL}}
\newcommand{\SU}{\operatorname{SU}}
\newcommand{\UU}{\operatorname{U}}
\newcommand{\Aut}{\operatorname{Aut}}
\newcommand{\Inn}{\operatorname{Inn}}

\newcommand{\Hom}{\operatorname{Hom}}
\newcommand{\Epi}{\operatorname{Epi}}

\newcommand{\tr}{\operatorname{tr}}
\newcommand{\rank}{\operatorname{rank}}

\newcommand{\diag}{\operatorname{diag}}

\newcommand{\sulie}{\mathfrak{su}}

\newcommand{\Zcl}[1]{\overline{#1}^{\,\mathrm{Zar}}}

\newcommand{\PP}{\operatorname{PU}}
\hypersetup{pdftitle={Cosets with constant characteristic polynomial},pdfauthor={Nir Avni and Tsachik Gelander},pdfsubject={Linear groups, constant characteristic polynomials, and free-group automorphisms}}

\title{\textbf{Cosets with constant characteristic polynomial}}
\author{Nir Avni \and Tsachik Gelander}
\date{\rev{September 28, 2026}}

\begin{document}
\maketitle
\vspace{-1.5em}
\begin{abstract}
Let $k$ be a field, $H\leq\GL_d(k)$, and $x\in\GL_d(k)$. We prove that if the characteristic polynomial is constant on $xH$, then the identity component of the Zariski closure of $H$ is solvable. In particular, $H$ is virtually solvable. Over $\C$, we give a topological proof using a null-homotopy of characteristic-polynomial fibers and a nonzero degree-three period on nontrivial representations of $\SU(2)$. An \'etale-cohomological proof establishes the result in arbitrary characteristic without a Levi decomposition. This proves Conjecture~9.3 of~\cite{Gelander} and implies Conjectures~9.5--9.7. The result also applies when the coset has only finitely many characteristic polynomials and gives a unipotence criterion in characteristic zero. \rev{As an application, we prove that every faithful finite-dimensional representation of $F_n$, $n\geq3$, over any field has infinitely many characteristic polynomials on primitive elements. This gives an alternative straightforward proof for the celebrated Formanek--Procesi nonlinearity theorem for $\Aut(F_n)$.} For complex representations and \rev{$n\geq5$}, the results of~\cite{PrimitiveSpectra} strengthen the classical proper-factor conclusion from nilpotent-by-abelian-by-finite to virtually unipotent, hence virtually nilpotent. Further consequences for primitive representations, automorphism actions, and invariant measures on compact representation spaces are also recorded from~\cite{PrimitiveSpectra}. Explicit examples describe the sharpness of the conclusions.
\end{abstract}

\section{Introduction and main theorem}

The main theorem is stated over an arbitrary field $k$. For a subgroup of $\GL_d(k)$, its Zariski closure is taken inside $\GL_d(\overline{k})$, where $\overline{k}$ is an algebraic closure of $k$, and is given its reduced structure. Subgroups need not be closed or finitely generated. We specify the field separately in sections using complex topology or explicit examples. For a matrix $A$, write
\[
\chi_A(z)=\det(zI-A).
\]
Unless otherwise specified, conjecture numbers refer to the April 2024 version of the survey~\cite{Gelander}.

Conjecture~9.3 of~\cite{Gelander} asks whether a subgroup $H\leq\GL_d(\C)$ must be virtually solvable whenever one of its cosets is contained in a single conjugacy class. We prove the following stronger statement.

\begin{theorem}[Constant characteristic polynomial]\label{thm:main}
Let $k$ be any field, let $H\leq\GL_d(k)$, and let $x\in\GL_d(k)$. Suppose that
\begin{equation}\label{eq:constant-spectrum}
\chi_{xh}=\chi_x\qquad\text{for every }h\in H.
\end{equation}
Then the identity component of $K=\Zcl{H}$ is solvable. In particular, $H$ is virtually solvable.
\end{theorem}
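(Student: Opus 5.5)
The plan is to reduce to a copy of $\SL_2$ inside $\Zcl{H}$ and derive a contradiction from a trace identity on a nontrivial constituent.

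\medskip
\noindent\textbf{Step 1 (pass to the Zariski closure).} The map $g\mapsto \chi_{xg}$ sends $\GL_d(\overline{k})$ to the space of monic polynomials and is a morphism: its coefficients are $\pm\tr\Lambda^i(xg)$. Hence the locus where it equals $\chi_x$ is Zariski closed, so \eqref{eq:constant-spectrum} holds for every $g\in K=\Zcl{H}$. Suppose, for contradiction, that $K^0$ is not solvable. By the structure theory of connected linear algebraic groups in arbitrary characteristic, $K^0$ then contains a nontrivial root subgroup together with its opposite. These generate a closed subgroup $S$ that is the image of a homomorphism $\SL_2\to K^0$ with finite central kernel, so $S\cong \SL_2$ or $\PP$-type $\mathrm{PGL}_2$. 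Thus it suffices to show that no coset $xS$ with $S$ of this form has constant characteristic polynomial.

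\medskip
\noindent\textbf{Step 2 (linearize via exterior powers).} Replacing $g$ by $gs$ shows that $\tr\Lambda^i(xg)$ is constant on $xS$ for each $i$. Let $R_i\subseteq \operatorname{End}(\Lambda^iV)$ be the linear span of $\Lambda^i(S)$, which is an algebra. Let $I_i\subseteq R_i$ be the two-sided ideal spanned by the elements $r(\Lambda^i u-1)$ with $r\in R_i$ and $u\in S$. Constancy then becomes
\[
\tr\bigl(\Lambda^i(x)\, j\bigr)=0\qquad\text{for all } j\in I_i \text{ and all } i.
\]
A semisimple group cannot act unipotently, so some $\Lambda^iV$, and already $V$ itself, has a nontrivial irreducible $S$-constituent $W$. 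The image of $I_1$ in the semisimple quotient $R_1/\operatorname{rad}R_1$ therefore contains the full matrix block $\operatorname{End}(W)$.

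\medskip
\noindent\textbf{Step 3 (main obstacle).} The difficulty is that a single trace identity is not enough. If $R_1$ is block upper triangular and $x$ is block lower triangular with suitable zero diagonal blocks, then $\tr(xI_1)=0$ can hold. So all the coefficients must be used together, and invertibility of $x$ must enter. My first attempt is to pick a maximal torus $\lambda:\mathbb G_m\to S$ and a root group $u:\mathbb G_a\to S$, and expand $\chi_{x\lambda(t)u(s)}$ as a Laurent polynomial in $t$ with polynomial coefficients in $s$.
\begin{itemize}
\item Constancy in $t$ forces all nonzero $t$-weight parts of every $\Lambda^i x$ to have vanishing pairing against the weight spaces of $\Lambda^i V$.
\item One then compares extreme weights, namely the highest weight of $\lambda$ on $V$ and the lowest weight on $\Lambda^{d}V=\det$, which is trivial. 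The aim is to show that $\det(x\lambda(t))$, or a suitably chosen principal minor of $x\lambda(t)$ adapted to a flag of $\lambda$-weight spaces, has a nonconstant leading term in $t$.
\item This leading term is a product of diagonal blocks of $x$, and it can vanish identically only if $x$ is singular. That contradicts $x\in\GL_d$.
\end{itemize}
If this extreme-weight bookkeeping fails in small characteristic, the fallback is the paper's announced approach. One views the fiber of $\chi$ through $x$ as a variety on which $S$ acts freely via $xS$. One then uses a nonzero degree-three class, the fundamental class of $\SU(2)$ over $\C$ or an \'etale analogue in general, which must restrict trivially because the characteristic-polynomial fiber is null-homotopic on the relevant part, giving a contradiction.

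\medskip
\noindent\textbf{Step 4 (conclude).} The contradiction shows that $K^0$ is solvable. Since $K^0$ has finite index in $K$, the subgroup $H\cap K^0$ has finite index in $H$ and is solvable.
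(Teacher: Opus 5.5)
Your proposal does not prove its decisive step, namely that no coset $xS$ of a nontrivial image $S$ of $\SL_2$ has constant characteristic polynomial.

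\textbf{Step~3 fails as stated.} Since $S$ has no nontrivial characters, $\det(x\lambda(t))=\det x$ is constant. A nonconstant principal minor proves nothing, because the coefficients of $\chi$ are sums of principal minors whose leading terms can cancel. Moreover, an invertible $x$ can have all diagonal blocks zero relative to the weight flag. For instance, take $S=\SL_2$ on $k^2$, $\lambda(t)=\diag(t,t^{-1})$ and $x=w=\begin{pmatrix}0&-1\\1&0\end{pmatrix}$. Then $\chi_{w\lambda(t)}=z^2+1$ for all $t$. Tori and unipotent groups genuinely admit constant-spectrum cosets (Examples~\ref{ex:cyclic-torus} and~\ref{ex:unipotent-torus}). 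In this example constancy is broken only by the root group, $\tr(w\lambda(t)u(s))=ts$, and your bullets never say how $s$ enters in general.

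\textbf{The fallback names the paper's strategy without its content.} The missing ingredients are:
\begin{enumerate}
\item The inclusion $\mathcal F_p\hookrightarrow\GL_d$ is null-homotopic via $(1-\gamma(s))I+\gamma(s)A$, because $\det((1-z)I+zA)$ depends only on $\chi_A$. Neither a free $S$-action on the fiber nor contractibility of the fiber is needed.
\item The degree-three class is left invariant, so $u\mapsto xr(u)$ pulls it back exactly as $r$ does.
\item This pullback is nonzero. Over $\C$ this follows from $\tr(dr(U)^2)<0$. In general it follows from the index $D(r)=\tfrac12\sum n_i^2>0$, obtained by showing that $D$ is a derivation on the representation ring of $\SL_2$.
\end{enumerate}
A Lie-algebra argument would not suffice in characteristic $p$, since Frobenius twists have zero differential.

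\textbf{Step~1 is false in positive characteristic.} Without a Levi decomposition, a connected nonsolvable group need not contain any nontrivial image of $\SL_2$. For $p>3$, consider $G=\SL_2(W_2(k))$, viewed as a connected algebraic group over $k$; it is an extension of $\SL_2$ by a vector group. A direct computation shows that every element lifting a nontrivial unipotent element of the quotient has order $p^2$. Hence no $\mathbb G_a$ lifts a root group, and there is no nontrivial homomorphism $\SL_2\to G$. You also cannot simply semisimplify, because $x$ need not preserve the $K^\circ$-stable flag.

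The paper handles both points cohomologically:
\begin{enumerate}
\item For the inclusion $j:K^\circ\hookrightarrow\GL_d$, the fiber vanishing plus translation invariance gives $j^*\eta_d=0$, a statement no longer involving $x$.
\item Block-scalar conjugation degenerates $j$ to $j_{\mathrm{ss}}$, so $j_{\mathrm{ss}}^*\eta_d=0$.
\item The image $L$ on the semisimplification is reductive and hence contains a root $\SL_2$.
\item The map $q^*:H^i(L)\to H^i(K^\circ)$ is injective because $\ker q$ is unipotent.
\end{enumerate}
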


\begin{corollary}[Conjecture 9.3]\label{cor:93}
Let $k$ be any field, let $H\leq\GL_d(k)$, and let $x\in\GL_d(k)$. If $Hx$ is contained in the conjugacy class of $x$ in $\GL_d(\overline{k})$, then $H$ is virtually solvable. The same conclusion holds if $Hx$ is contained in the Zariski closure of that conjugacy class.
\end{corollary}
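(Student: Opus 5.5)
The plan is to reduce Corollary~\ref{cor:93} directly to Theorem~\ref{thm:main}. We check that each of the two hypotheses forces the characteristic polynomial to be constant on a suitable coset of $H$. The first point is that $\chi$ is a conjugation invariant which is a polynomial in the matrix entries. So for $g\in\GL_d(\overline{k})$ we have $\chi_{gxg^{-1}}=\chi_x$. Hence the coefficients of $\chi_A$ are constant on the conjugacy class $C$ of $x$ in $\GL_d(\overline{k})$. These coefficients are regular functions on $\GL_d(\overline{k})$, so they remain constant on the Zariski closure $\Zcl{C}$, since the locus $\{A:\chi_A=\chi_x\}$ is Zariski closed. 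In both cases of the corollary, therefore, $\chi_{hx}=\chi_x$ for every $h\in H$.

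The corollary is phrased with the right coset $Hx$, whereas Theorem~\ref{thm:main} uses the left coset $xH$. To bridge this, I use $hx=x(x^{-1}hx)$. Set $H'=x^{-1}Hx\leq\GL_d(k)$, which is a subgroup because $x\in\GL_d(k)$. Then $xH'=Hx$, so $\chi_{xh'}=\chi_x$ for all $h'\in H'$. (Alternatively, note $\chi_{hx}=\chi_{xh}$, because $hx$ and $xh$ are conjugate by $x$; then $xH$ itself has constant characteristic polynomial.) This second route is simpler, so I would use it and take $H$ itself.

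Theorem~\ref{thm:main} then says that the identity component of $\Zcl{H}$ is solvable, and that $H$ is virtually solvable. To spell out the last step: $K^\circ$ has finite index in $K=\Zcl{H}$, so $H\cap K^\circ$ has finite index in $H$ and is solvable as a subgroup of a solvable group. There is no real obstacle here. The only point needing care is that constancy on the closure follows because $\chi$ is given by polynomials in the entries.
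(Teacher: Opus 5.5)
Your proposal is correct and matches the paper's proof: the characteristic polynomial is constant on the conjugacy class and on its Zariski closure, and $hx$ is conjugate to $xh$, so Theorem~\ref{thm:main} applies directly to $H$. The alternative via $H'=x^{-1}Hx$ and your re-derivation of virtual solvability from $K^\circ$ are not needed, since the theorem already states virtual solvability, but they do no harm.
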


\begin{proof}
The characteristic polynomial is constant on a conjugacy class and on its Zariski closure. Moreover, $hx$ and $xh$ are conjugate. Apply Theorem~\ref{thm:main}.
\end{proof}

Two consequences will be used in the applications.

\begin{corollary}[Finitely many characteristic polynomials on a coset]\label{cor:finite-spectrum-coset}
Let $H\leq\GL_d(k)$ and $x\in\GL_d(k)$. If the set $\{\chi_{xh}:h\in H\}$ is finite, then $\bigl(\Zcl{H}\bigr)^\circ$ is solvable and $H$ is virtually solvable.
\end{corollary}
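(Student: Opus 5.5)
We reduce to Theorem~\ref{thm:main} by passing to a finite-index subgroup of $H$ together with a suitable coset representative. Write $K=\Zcl{H}$ and consider the morphism of varieties $\Phi\colon K\to\mathbb{A}^d$, $g\mapsto\chi_{xg}$, whose coordinates are the coefficients of the characteristic polynomial of $xg$. Since $\Phi$ is a polynomial map, the set
\[
\Phi^{-1}\bigl(\{\chi_{xh}:h\in H\}\bigr)
\]
is Zariski closed in $K$. By hypothesis it contains $H$, so it contains $K$. Hence $\Phi(K)$ is finite. In particular, $\Phi$ restricted to the irreducible identity component $K^\circ$ takes only finitely many values on an irreducible variety, so it is constant, say equal to $\chi_{x}$ (note $1\in K^\circ$).

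Set $H_0=H\cap K^\circ$. This is a finite-index subgroup of $H$, because $K^\circ$ has finite index in $K$ and $H\to K/K^\circ$ has finite image. By the previous paragraph, $\chi_{xh}=\chi_x$ for every $h\in H_0$. Theorem~\ref{thm:main} applied to $H_0$ and $x$ then shows that $\bigl(\Zcl{H_0}\bigr)^\circ$ is solvable.

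It remains to compare $\Zcl{H_0}$ with $K$. The closure $\Zcl{H_0}$ has finite index in $K$: if $H=\bigcup_i h_iH_0$ is a finite union, then $K=\bigcup_i h_i\Zcl{H_0}$. Therefore the two groups share the same identity component, so $K^\circ=\bigl(\Zcl{H_0}\bigr)^\circ$ is solvable. Finally, $H\cap K^\circ$ has finite index in $H$ and lies in a solvable group, so $H$ is virtually solvable.

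The step needing the most care is the first one: constancy must be deduced on all of $K^\circ$, not merely on $H_0$. This follows from the finiteness of the image and the irreducibility of $K^\circ$. Everything here is over $\overline{k}$, and Theorem~\ref{thm:main} is applied with $H_0\le\GL_d(k)$ and $x\in\GL_d(k)$, as its hypotheses require. In fact constancy on $H_0$ alone suffices for that application.
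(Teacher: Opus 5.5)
Your argument is correct and follows the paper's proof: the preimage of the finite set of characteristic polynomials is Zariski closed and so contains $K$, irreducibility of $K^\circ$ forces the constant value $\chi_x$ on $xK^\circ$, and Theorem~\ref{thm:main} finishes. The only difference is that the paper applies Theorem~\ref{thm:main} directly to the algebraic group $K^\circ$ over $\overline{k}$, whereas you apply it to $H_0=H\cap K^\circ\leq\GL_d(k)$; this needs your extra finite-index comparison, which is correct and in fact gives $\Zcl{H_0}=K^\circ$.
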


\begin{proof}
Let $K=\Zcl{H}$. The finite union of the relevant characteristic-polynomial fibers is Zariski closed, so it contains $xK$. The regular characteristic-polynomial map on the connected variety $xK^\circ$ has finite image and is therefore constant. At the identity of $K^\circ$ its value is $\chi_x$. Apply Theorem~\ref{thm:main} to the subgroup $K^\circ$.
\end{proof}

\begin{corollary}[Unipotent cosets in characteristic zero]\label{cor:unipotent-coset}
Assume that $\operatorname{char}(k)=0$. If $H$ is generated by unipotent matrices and $xH$ consists entirely of unipotent matrices, then $H$ is conjugate over $\overline{k}$ to a subgroup of the upper unitriangular group.
\end{corollary}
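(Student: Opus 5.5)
We work with $K=\Zcl{H}\leq\GL_d(\overline{k})$ throughout. First we pass from $H$ to $K$. The condition that $xg$ is unipotent is Zariski closed in $g$: it says $\chi_{xg}(z)=(z-1)^d$. It holds on $H$, hence on all of $K$, and in particular on $K^\circ$. By Theorem~\ref{thm:main}, $K^\circ$ is solvable. Since $H$ is generated by unipotent matrices, the goal is to show that $K$ is connected and unipotent. The Lie--Kolchin theorem will then put $K$, and hence $H$, into upper unitriangular form after conjugating over $\overline{k}$.

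\textbf{Step 1: $K$ is connected.} In characteristic zero, a unipotent element $u$ lies in the image of the exponential of a nilpotent matrix. So $u$ lies in the one-dimensional connected unipotent group $\{\exp(t\log u)\}$, which is Zariski closed. This group is contained in the Zariski closure of $\langle u\rangle$, and therefore in $K^\circ$. Thus every generator of $H$ lies in $K^\circ$. Hence $H\leq K^\circ$, and so $K=K^\circ$ is connected and solvable.

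\textbf{Step 2: $K$ is unipotent.} Write $K=T\ltimes U$ with $U=R_u(K)$ and $T$ a maximal torus. Because $K$ is generated by unipotent elements, its image in $K/U\cong T$ is a torus generated by unipotent elements. That image is therefore trivial. So $K=U$ is unipotent, and Lie--Kolchin gives a $\overline{k}$-conjugation of $K$ into the upper unitriangular group.

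Notice that the argument above never used the hypothesis on $xH$ beyond what Theorem~\ref{thm:main} supplies. It is worth rechecking this, since the hypothesis on the coset looks like it should matter. In fact, in characteristic zero any group generated by unipotents has connected Zariski closure. Such a closure can still be non-solvable, for example $\SL_2$. So the role of the coset hypothesis is exactly to force solvability through Theorem~\ref{thm:main}.

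Step 2 deserves care: the quotient map $K\to K/U$ sends unipotents to unipotents, and a torus has no nontrivial unipotents. With that verified, the proof is complete. The main subtlety is Step 1. It uses characteristic zero, because in positive characteristic unipotents have finite order and $K$ could be finite, as with finite groups generated by transvections.
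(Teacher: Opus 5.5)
Your proof is correct and takes essentially the paper's route: you get connectedness of $K$ from the one-parameter groups $\exp(t\log u)$ in characteristic zero, and solvability from Theorem~\ref{thm:main} applied to the closed condition $\chi_{xg}=(z-1)^d$, followed by Lie--Kolchin. The only difference is the last step. The paper just notes that once $K$ is triangularized, the unipotent generators have unit diagonal and hence so does all of $H$. You instead first show $K=R_u(K)$ via $K=T\ltimes U$; strictly it is the dense subgroup $H$, not $K$, that is generated by unipotents, and its image in the torus $K/U$ is therefore trivial.
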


\begin{proof}
Put $K=\Zcl{H}$. For every unipotent generator $u$, the Zariski closure of $\langle u\rangle$ contains the one-parameter group $\{\exp(t\log u):t\in\overline{k}\}$: the exponential and logarithm are finite polynomial expressions, and a polynomial vanishing at all integral parameters vanishes identically. Thus every generator belongs to $K^\circ$, so $K$ is connected. Theorem~\ref{thm:main} makes $K$ solvable, and the Lie--Kolchin theorem simultaneously upper triangularizes it. Every unipotent generator has all diagonal entries equal to $1$, and hence so does every element of $H$.
\end{proof}

\begin{remark}[Conjectures 9.5--9.7]\label{rem:95-97}
Theorem~\ref{thm:main} also implies Conjectures~9.5--9.7 of~\cite{Gelander}. For $g\in\GL_d(\C)$, let
\[
\mathcal C_g=\{aga^{-1}g^{-1}:a\in\GL_d(\C)\}
\]
be the set of commutators with fixed second argument $g$. If $H\leq\GL_d(\C)$ is contained in $\Zcl{\mathcal C_g}$, then $Hg$ is contained in the Zariski closure of the conjugacy class of $g$, so Corollary~\ref{cor:93} makes $H$ virtually solvable. Thus no faithful image of $F_2$ is contained in $\Zcl{\mathcal C_g}$. Nor can this set contain a nontrivial homomorphic image of $\SL_2(\C)$: this group is perfect and has no nontrivial finite quotient, so any virtually solvable image is trivial. These conclusions hold also with $\SL_d(\C)$ in place of $\GL_d(\C)$ and without taking the closure, giving all three conjectures.
\end{remark}

The first proof, over $\C$, is topological. For any monic polynomial $p$ of degree $d$ with $p(0)\neq0$, the inclusion
\[
\mathcal F_p=\{A\in\GL_d(\C):\chi_A=p\}
\lhook\joinrel\longrightarrow\GL_d(\C)
\]
is null-homotopic. On the other hand, a nontrivial representation of $\SU(2)$, or any left translate of it, has a nonzero period against the closed three-form $\tr((A^{-1}dA)^3)$. A nonsolvable connected complex algebraic group contains the image of a nontrivial algebraic homomorphism from $\SL_2(\C)$. These observations prove the theorem without assumptions on the number or multiplicities of irreducible summands.

In positive characteristic, the same matrix interpolation yields vanishing in rational $\ell$-adic cohomology. A degree-three class detects every nontrivial algebraic representation of $\SL_2$, including Frobenius twists. Passing to the image on the semisimplification of the defining module replaces the Levi decomposition and proves the theorem in all characteristics.

\begin{revision}
The coset theorem also gives a strengthening of the nonlinearity theorem of Formanek and Procesi~\cite{FormanekProcesi}: every faithful finite-dimensional representation of $F_n$, $n\geq3$, over any field has infinite primitive characteristic-polynomial spectrum (Theorem~\ref{thm:infinite-primitive-spectrum}). The nonlinearity of $\Aut(F_n)$ follows because its primitive inner automorphisms are conjugate, so their images in any linear representation have a single characteristic polynomial.
\end{revision}

For \rev{$n\geq5$}, the application to $\Aut(F_n)$ admits a stronger conclusion over $\C$. The classical Formanek--Procesi theorem~\cite{FormanekProcesi} gives nilpotent-by-abelian-by-finite images of the inner automorphisms arising from proper free factors. Combining the coset theorem with the characteristic-representation results of~\cite{PrimitiveSpectra}, we obtain virtual unipotence, and hence virtual nilpotence, for these images in every dimension; see Corollary~\ref{cor:aut-proper-unipotent}. The finite-index qualification is necessary, as shown in Remark~\ref{rem:finite-image-obstruction}.

\begin{revision}
For $n\geq4$, the companion paper also proves that the connected solvable radical of the Zariski closure of the full inner image is unipotent. Consequently, virtual solvability, virtual nilpotence and virtual unipotence of the full inner image are equivalent. The remaining global conjecture is equivalent to finiteness of the inner image in every irreducible complex representation of $\Aut(F_n)$; see Corollary~\ref{cor:inner-radical-applications}.
\end{revision}

Section~\ref{sec:topology} gives the topological proof over $\C$, and Section~\ref{sec:positive-characteristic} proves the theorem in arbitrary characteristic. Section~\ref{sec:primitive-applications} records applications to primitive representations, characteristic representations, and invariant measures for compact groups; proofs of those results are given in the companion paper~\cite{PrimitiveSpectra}. \rev{Section~\ref{sec:aut-nonlinearity} proves infinitude of the primitive spectrum for faithful representations, deduces the Formanek--Procesi theorem, and gives the proper-factor virtual-unipotence strengthening for $n\geq5$ over $\C$.} Appendix~\ref{sec:examples} contains four examples illustrating the scope and sharpness of the theorem.

\section{The topological obstruction}\label{sec:topology}

Throughout this section the ground field is $\C$.

\begin{lemma}[Null-homotopy of a characteristic-polynomial fiber]\label{lem:fiber}
For every monic $p\in\C[z]$ of degree $d$ with $p(0)\neq0$, the inclusion $\mathcal F_p\hookrightarrow\GL_d(\C)$ is null-homotopic. More precisely, it is homotopic to the constant map with value $I$ by an explicit homotopy in $\GL_d(\C)$.
\end{lemma}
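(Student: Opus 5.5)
The plan is to contract $\mathcal F_p$ within $\GL_d(\C)$ by a straight-line interpolation toward a scalar matrix, and then to move that scalar to $1$. The key observation is that every $A\in\mathcal F_p$ has the same finite set of eigenvalues, namely the roots $\mu_1,\dots,\mu_r$ of $p$, all of which are nonzero because $p(0)\neq0$. This is what allows one homotopy to work uniformly over the whole fiber.

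First, choose $\lambda\in\C^\times$ such that for every root $\mu_j$ the closed segment $\{(1-t)\mu_j+t\lambda: t\in[0,1]\}$ does not contain $0$. The segment from $\mu_j\neq0$ to $\lambda\neq0$ passes through $0$ exactly when $\lambda$ lies on the open ray $\mathbb{R}_{<0}\,\mu_j$. So it suffices to take $\lambda$ off these finitely many rays. For instance, $\lambda$ can be taken on the unit circle, avoiding finitely many arguments.

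Then define
\[
\Phi(A,t)=(1-t)A+t\lambda I,\qquad A\in\mathcal F_p,\ t\in[0,1].
\]
This is jointly continuous. The eigenvalues of $\Phi(A,t)$ are the numbers $(1-t)\mu_j+t\lambda$, since $A$ is triangularizable and adding a scalar matrix shifts the eigenvalues. By the choice of $\lambda$ these are nonzero, so $\Phi(A,t)\in\GL_d(\C)$. We have $\Phi(\cdot,0)$ equal to the inclusion and $\Phi(\cdot,1)\equiv\lambda I$.

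Finally, concatenate with a path $\gamma\colon[0,1]\to\C^\times$ from $\lambda$ to $1$, which exists because $\C^\times$ is path-connected. The map $(A,s)\mapsto\gamma(s)I$ is then a homotopy of constant maps from $\lambda I$ to $I$. The concatenation is an explicit homotopy in $\GL_d(\C)$ from the inclusion to the constant map with value $I$.

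The only point needing care is the invertibility along the straight-line segment, and this is exactly where the fixed spectrum is used. For an arbitrary subset of $\GL_d(\C)$ no single $\lambda$ would work, but here the avoided rays depend only on $p$. I do not expect any step to be a genuine obstacle.
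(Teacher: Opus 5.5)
Your proof is correct and is essentially the paper's argument. Both use the pencil $\alpha I+\beta A$, whose determinant $\prod_j(\alpha+\beta\mu_j)$ depends only on $p$, and move along a path from $A$ to a scalar matrix that avoids its zeros. The paper keeps the endpoints $A$ and $I$ and bends the interpolation parameter $z\in\C$ around the finitely many roots of $q(z)=\prod_j(1-z+z\mu_j)$. You instead keep the parameter real, choose the scalar endpoint $\lambda$ off finitely many rays, and then slide $\lambda I$ to $I$; this is a cosmetic difference.
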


\begin{proof}
Let $\lambda_1,\ldots,\lambda_d\in\C^*$ be the roots of $p$, counted with multiplicity. For $A\in\mathcal F_p$ and $z\in\C$,
\begin{equation}\label{eq:qpoly}
\det((1-z)I+zA)
=q(z):=\prod_{j=1}^d(1-z+z\lambda_j).
\end{equation}
The right-hand side is independent of $A$. Since $q(0)=1$ and $q(1)=\prod_j\lambda_j\neq0$, choose a smooth path
\[
\gamma:[0,1]\longrightarrow\C\setminus q^{-1}(0),
\qquad\gamma(0)=0,\quad\gamma(1)=1.
\]
Then
\begin{equation}\label{eq:fiber-homotopy}
F(s,A)=(1-\gamma(s))I+\gamma(s)A
\end{equation}
is invertible for all $s$ and $A$, by \eqref{eq:qpoly}. It satisfies $F(0,A)=I$ and $F(1,A)=A$.
\end{proof}

\begin{remark}
The homotopy takes place in the ambient group $\GL_d(\C)$; it is not asserted to stay inside $\mathcal F_p$. In particular, the lemma does not assert that the fiber itself is contractible. Its possible singularities cause no difficulty, because \eqref{eq:fiber-homotopy} can be composed with any smooth map into the fiber.
\end{remark}

\begin{lemma}[A nonzero three-dimensional period]\label{lem:period}
Let $r:\SU(2)\to\GL_d(\C)$ be a nontrivial smooth homomorphism, and let $x\in\GL_d(\C)$. The map
\[
f_x:\SU(2)\longrightarrow\GL_d(\C),\qquad u\longmapsto x r(u),
\]
is not null-homotopic.
\end{lemma}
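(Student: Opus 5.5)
The plan is to pair $f_x$ with the closed three-form
\[
\omega=\tr\bigl((A^{-1}dA)^3\bigr)\quad\text{on }\GL_d(\C)
\]
and show that $\int_{\SU(2)}f_x^*\omega\neq0$. A null-homotopic map from the closed oriented manifold $\SU(2)\cong S^3$ pulls every closed form back to an exact form, whose integral vanishes. Hence a nonzero period shows $f_x$ is not null-homotopic.

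\emph{Removing $x$.} Write $\theta=A^{-1}dA$ for the Maurer--Cartan form. Since $x$ is constant, $(xr)^{-1}d(xr)=r^{-1}dr$. Thus $f_x^*\theta=r^*\theta$ and $f_x^*\omega=r^*\omega$, so we may assume $x=I$. This also follows from left invariance of $\omega$.

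\emph{Reduction to irreducibles.} $\SU(2)$ is compact, so $r$ is unitarizable and decomposes. After conjugating by some $g\in\GL_d(\C)$, which leaves $\omega$ unchanged because $\tr$ is conjugation invariant, $r=\bigoplus_i\rho_{n_i}$. Here $\rho_n$ is the irreducible representation of dimension $n+1$. Because $r$ is a homomorphism, $r^*\theta$ is left invariant on $\SU(2)$ and equals $dr_e$ applied to the left-invariant Maurer--Cartan form of $\SU(2)$, which is $\sulie(2)$-valued. Block-diagonal forms have additive traces, so
\[
r^*\omega=\sum_i\rho_{n_i}^*\omega .
\]
Each term equals $c(\rho_{n_i})\,\tr_{\mathbb C^2}\bigl((u^{-1}du)^3\bigr)$, where $c(\rho)$ is determined by the trace form. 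Indeed, for $X,Y,Z\in\sulie(2)$ one has $\tr\bigl(d\rho(X)\,[d\rho(Y),d\rho(Z)]\bigr)=c(\rho)\,\tr\bigl(X[Y,Z]\bigr)$. The invariant trilinear form on $\sulie(2)\cong\mathbb R^3$ is unique up to scalar, namely the determinant.

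\emph{Computing $c(\rho_n)$.} The form $B_\rho(X,Y)=\tr\bigl(d\rho(X)d\rho(Y)\bigr)$ is a multiple of the Killing form. For $H=\diag(i,-i)$ we have $d\rho_n(H)=\diag(in,i(n-2),\dots,-in)$. Hence
\[
B_{\rho_n}(H,H)=-\sum_{j=0}^n(n-2j)^2=-\tfrac{n(n+1)(n+2)}{3},
\]
and the corresponding multiple is $\tfrac{n(n+1)(n+2)}{6}$ times that of the standard representation. All these multiples have the same sign and vanish only for $n=0$. Therefore
\[
\int r^*\omega=\Bigl(\sum_i\tfrac{n_i(n_i+1)(n_i+2)}{6}\Bigr)\int_{\SU(2)}\tr\bigl((u^{-1}du)^3\bigr).
\]
Nontriviality of $r$ gives some $n_i\geq1$, so no cancellation occurs.

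\emph{The base integral.} It remains to check that $\int_{\SU(2)}\tr\bigl((u^{-1}du)^3\bigr)\neq0$. At $u=e$, with $\theta=\sum e_a\theta^a$ for $e_a=i\sigma_a$, we have
\[
\tr(\theta^3)=\sum\tr\bigl(e_ae_be_c\bigr)\,\theta^a\wedge\theta^b\wedge\theta^c=6\,\tr(e_1e_2e_3)\,\theta^1\wedge\theta^2\wedge\theta^3 .
\]
Here $\tr(e_1e_2e_3)=-i^3\cdot 2i\neq0$, up to sign. So the form is a nonzero multiple of the invariant volume form, and the integral is a nonzero constant times the volume; the standard value is $-24\pi^2 i$ up to normalization.

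The main obstacle is the no-cancellation step: the contributions of different summands must not cancel. The proportionality-to-Killing-form argument handles this, since every coefficient has the same sign. The rest is bookkeeping.
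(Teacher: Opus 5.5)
Your proof is correct. Its skeleton is the paper's: the same closed form $\omega=\tr(\theta^3)$, left invariance to discard $x$, conjugation invariance after unitarizing, and Stokes. The difference is in how you show that $r^*\omega$ has nonzero integral. You decompose $r$ into irreducibles $\rho_{n_i}$. You then use proportionality of invariant trace forms to write each $\rho_{n_i}^*\omega$ as $\binom{n_i+2}{3}$ times the form of the standard representation, and compute the base integral separately. The paper instead evaluates at the identity on a basis with $[V,W]=U$, obtaining $(r^*\omega)_e(U,V,W)=3\tr(dr(U)^2)$. It concludes from injectivity of $dr$ (simplicity of $\sulie(2)$) and negative definiteness of $X\mapsto\tr(X^2)$ on skew-Hermitian matrices. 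That argument disposes of your ``no cancellation'' concern at once, since $\tr(dr(U)^2)$ is already a sum of nonpositive block contributions. It also needs neither the classification of irreducible $\SU(2)$-modules nor the base integral. What your route buys is the exact period,
\[
\int r^*\omega=\pm24\pi^2\sum_i\binom{n_i+2}{3}.
\]
Since $\frac12\sum_{j=0}^n(n-2j)^2=\binom{n+2}{3}$, this coefficient is half the sum of the squares of all torus weights of $r$. That is precisely the index $D(r)$ of Proposition~\ref{prop:etale-index}, so your computation is the de Rham counterpart of the paper's \'etale formula. One harmless slip: $e_1e_2e_3=I$, so $\tr(e_1e_2e_3)=2$. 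Moreover, $\tr(\theta^3)$ is real-valued on $\SU(2)$, so the base integral is $\pm24\pi^2$, not an imaginary number.
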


\begin{proof}
On $\GL_d(\C)$, let $\theta=A^{-1}dA$ be the Maurer--Cartan form and set
\[
\omega=\tr(\theta\wedge\theta\wedge\theta).
\]
Products of matrix-valued forms include both matrix multiplication and exterior multiplication. The Maurer--Cartan equation gives
\[
d\omega=-\tr(\theta^4)=0.
\]
The last equality follows from graded cyclicity of trace: moving the first factor of $\theta$ past the other three changes the sign, so $\tr(\theta^4)=-\tr(\theta^4)$. The form $\omega$ is left invariant, hence
\begin{equation}\label{eq:translation-period}
f_x^*\omega=r^*\omega.
\end{equation}

By averaging a Hermitian inner product, we may conjugate $r$ to a unitary representation. Conjugation preserves $\omega$. Choose a basis $U,V,W$ of $\sulie(2)$ such that $[V,W]=U$. At the identity,
\begin{align*}
(r^*\omega)_e(U,V,W)
&=3\tr\bigl(dr(U)[dr(V),dr(W)]\bigr)\\
&=3\tr\bigl(dr(U)^2\bigr).
\end{align*}
The differential $dr$ is nonzero, since $\SU(2)$ is connected and $r$ is nontrivial. Simplicity of $\sulie(2)$ implies that $dr$ is injective. Thus $dr(U)$ is a nonzero skew-Hermitian matrix, and
\[
\tr(dr(U)^2)<0.
\]
It follows that $r^*\omega$ is a nonzero left-invariant top-degree form on the compact connected three-manifold $\SU(2)$. Consequently,
\begin{equation}\label{eq:nonzero-period}
\int_{\SU(2)}r^*\omega\neq0.
\end{equation}
The integral of a closed form over a null-homotopic map from a closed oriented manifold is zero, by Stokes' theorem. Equations~\eqref{eq:translation-period} and~\eqref{eq:nonzero-period} prove the claim.
\end{proof}

\begin{proposition}[No semisimple coset in a characteristic-polynomial fiber]\label{prop:sl2-coset}
Let $r:\SL_2(\C)\to\GL_d(\C)$ be a nontrivial algebraic representation. For every $x\in\GL_d(\C)$, the characteristic polynomial is nonconstant on $x r(\SL_2(\C))$.
\end{proposition}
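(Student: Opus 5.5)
We argue by contradiction, combining Lemma~\ref{lem:fiber} with Lemma~\ref{lem:period}. Suppose the characteristic polynomial is constant on $x\,r(\SL_2(\C))$. Evaluating at the identity of $\SL_2(\C)$ shows that the constant value is $p:=\chi_x$. Since $x$ is invertible, $p$ is monic of degree $d$ with $p(0)=\pm\det x\neq0$. Hence the whole coset $x\,r(\SL_2(\C))$ lies in the fiber $\mathcal F_p$.

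Now restrict $r$ to the compact subgroup $\SU(2)\subset\SL_2(\C)$. We first check that this restriction is nontrivial. The kernel of $r$ is a Zariski-closed normal subgroup of $\SL_2(\C)$, and the proper ones are contained in the center $\{\pm I\}$. A nontrivial $r$ therefore has finite kernel. Since $\SU(2)$ is infinite, $r|_{\SU(2)}$ is nontrivial. Alternatively, $dr\neq0$ on $\mathfrak{sl}_2(\C)=\sulie(2)\otimes\C$, so $dr$ cannot vanish on the real form $\sulie(2)$. The restriction is also a smooth homomorphism, because $r$ is algebraic.

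The map $f_x\colon u\mapsto x\,r(u)$ factors through $\mathcal F_p$:
\[
\SU(2)\longrightarrow\mathcal F_p\lhook\joinrel\longrightarrow\GL_d(\C).
\]
By Lemma~\ref{lem:fiber}, the inclusion of $\mathcal F_p$ is null-homotopic via the explicit homotopy \eqref{eq:fiber-homotopy}. Composing gives $F(s,x\,r(u))$, which is a smooth null-homotopy of $f_x$ inside $\GL_d(\C)$. Composing with a smooth map avoids any concern about singularities of the fiber, as noted in the remark after that lemma. This contradicts Lemma~\ref{lem:period}, which says $f_x$ is not null-homotopic. Therefore the characteristic polynomial is nonconstant on $x\,r(\SL_2(\C))$.

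The only step needing care is nontriviality on $\SU(2)$; the other steps are direct citations. That step reduces to the standard fact that $\SL_2(\C)$ is almost simple, or equivalently that $\SU(2)$ is Zariski dense in $\SL_2(\C)$.
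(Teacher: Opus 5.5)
Your proposal is correct and follows the paper's proof: you place the coset in the fiber $\mathcal F_{\chi_x}$, compose the null-homotopy of Lemma~\ref{lem:fiber} with $u\mapsto xr(u)$, and contradict Lemma~\ref{lem:period}. The paper justifies nontriviality of $r$ on $\SU(2)$ only by your second argument, that the differential of the restriction complexifies to $dr$; your kernel argument via almost simplicity of $\SL_2(\C)$ is an equally valid alternative.
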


\begin{proof}
The restriction of $r$ to $\SU(2)$ is nontrivial: its differential complexifies to the differential of $r$. If the coset had constant characteristic polynomial, the map $u\mapsto xr(u)$ would take values in a single fiber $\mathcal F_p$. Lemma~\ref{lem:fiber} would make it null-homotopic, contrary to Lemma~\ref{lem:period}.
\end{proof}

\begin{proof}[Proof of Theorem~\ref{thm:main} over $\C$]
The coefficients of $\chi_{xh}$ are regular functions of $h$. Hence \eqref{eq:constant-spectrum} extends from $H$ to $K=\Zcl{H}$.

Suppose that $K^\circ$ is not solvable. By Levi decomposition in characteristic zero, a Levi subgroup of $K^\circ$ has a nontrivial connected semisimple derived subgroup. The root subgroup construction, applied to a simply connected cover of this subgroup, gives a nontrivial algebraic homomorphism
\[
r:\SL_2(\C)\longrightarrow K^\circ\lhook\joinrel\longrightarrow\GL_d(\C).
\]
These standard structure facts may be found in \cite{Borel}. The homomorphism may have a finite central kernel; injectivity is not needed. Since $xr(\SL_2(\C))\subseteq xK$, its characteristic polynomial is constant, contradicting Proposition~\ref{prop:sl2-coset}.

Thus $K^\circ$ is solvable. The subgroup $H\cap K^\circ$ is solvable and has finite index in $H$, completing the proof.
\end{proof}

\section{The theorem in arbitrary characteristic}\label{sec:positive-characteristic}

We now prove Theorem~\ref{thm:main} over every field, including fields of positive characteristic. Throughout this section, $k$ is algebraically closed and $\ell$ is a prime different from $\operatorname{char}(k)$. Write
\[
H^i(X)=H^i_{\mathrm{\acute et}}(X,\mathbb Q_\ell).
\]
All algebraic groups, unless a group scheme is explicitly mentioned, are smooth affine algebraic groups over $k$. We suppress Tate twists, since no Galois action is being considered. The cohomological input consists of the K\"unneth formula, homotopy invariance, the localization sequence with purity, and finite \'etale transfer; see \cite[\S\S16, 19, 22]{MilneEtale}. We give the degree-three calculation explicitly.

\subsection{Connected families and the degree-three class}

\begin{lemma}[Connected families]\label{lem:connected-family}
Let $S$ be a connected variety over $k$, and let $F:S\times X\to Y$ be a morphism of varieties. For any $s_0,s_1\in S(k)$, the maps $F_{s_0}$ and $F_{s_1}$ induce the same pullback on $H^i$ for every $i$. In particular, left and right translations on a connected algebraic group act trivially on its cohomology.
\end{lemma}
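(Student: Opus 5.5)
The plan is to factor $F_{s}$ through the inclusion of a point and apply the Künneth formula to $S\times X$. For $s\in S(k)$, let $i_s:X\to S\times X$ be the map $x\mapsto(s,x)$. Then $F_s=F\circ i_s$, so
\[
F_s^*=i_s^*\circ F^*:H^i(Y)\to H^i(S\times X)\to H^i(X).
\]
It therefore suffices to show that $i_{s_0}^*=i_{s_1}^*$ on all of $H^i(S\times X)$.

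By Künneth, $H^\ast(S\times X)\cong H^\ast(S)\otimes H^\ast(X)$ via $a\otimes b\mapsto p_S^*a\cup p_X^*b$. Since $p_X\circ i_s=\mathrm{id}_X$ and $p_S\circ i_s$ is the constant map to $s$, we get
\[
i_s^*(p_S^*a\cup p_X^*b)=\bigl(j_s^*a\bigr)\cdot b .
\]
Here $j_s:\operatorname{Spec}k\to S$ is the point $s$, and $j_s^*a\in H^\ast(\operatorname{Spec}k)$. That group is $\mathbb Q_\ell$ in degree $0$ and zero otherwise, because $k$ is algebraically closed. So only the degree-$0$ part of $a$ contributes, and everything reduces to one check: $j_{s_0}^*=j_{s_1}^*$ on $H^0(S)$.

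For the degree-$0$ check, note that $S$ is connected, so $H^0(S)=\mathbb Q_\ell$ consists of the constant functions. Restricting a constant to any point gives the same value, which proves the equality for all $i$.

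\textbf{Expected obstacle.} The main point is the validity of Künneth in this setting: for non-proper varieties one should use the Künneth formula for ordinary étale cohomology with $\mathbb Q_\ell$ coefficients over an algebraically closed field. This holds for varieties in any characteristic (cf.\ \cite{MilneEtale}, and the paper already assumes Künneth).

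If Künneth is to be avoided, a fallback is available. When $S$ is a curve, join $s_0$ and $s_1$ by a chain of smooth curves, normalizing if needed. Connected varieties are chain-connected by irreducible curves, so this reduces to that case. One could also reduce via proper base change, but I would rely on Künneth.

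For translations on a connected algebraic group $G$, take $S=G$ and $F(g,x)=gx$, or $F(g,x)=xg$ for right translation. For $s_1=g$, $F_{s_1}$ is the translation by $g$; for $s_0=e$, $F_{s_0}=\mathrm{id}$. The general statement then gives that translation acts as the identity on $H^i(G)$.
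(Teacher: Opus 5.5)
Your proposal is correct and follows the paper's proof: you factor $F_s$ through $i_s$, use the K\"unneth decomposition of $H^i(S\times X)$ to see that restriction to $\{s\}\times X$ only sees the summand $H^0(S)\otimes H^i(X)$, and use connectedness of $S$ to get $H^0(S)=\mathbb Q_\ell$, so the pullback is independent of $s$. Your application to translations, taking $F(g,x)=gx$ or $xg$ and comparing with $s_0=e$, is also the paper's argument.
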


\begin{proof}
Under the K\"unneth decomposition of $H^i(S\times X)$, restriction to $\{s\}\times X$ kills the summands of positive degree on $S$ and identifies $H^0(S)\otimes H^i(X)$ with $H^i(X)$. Since $H^0(S)=\mathbb Q_\ell$, this is independent of $s$. Apply this to $F^*$, and then to group multiplication.
\end{proof}

\begin{lemma}[Vanishing on a characteristic-polynomial fiber]\label{lem:etale-fiber}
For a monic polynomial $P\in k[z]$ with $P(0)\neq0$, let
\[
\mathcal F_P=\{A\in\GL_d(k):\chi_A=P\}.
\]
The inclusion $i_P:\mathcal F_P\hookrightarrow\GL_d$ induces the zero map on $H^i$ for every $i>0$.
\end{lemma}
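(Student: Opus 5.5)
The plan is to mimic the complex-topological interpolation of Lemma~\ref{lem:fiber} algebraically, replacing the path $\gamma$ by a connected parameter variety and invoking Lemma~\ref{lem:connected-family}. As in \eqref{eq:qpoly}, set
\[
q(z)=\det\bigl((1-z)I+zA\bigr)=\prod_{j=1}^d(1-z+z\lambda_j),
\]
which for $A\in\mathcal F_P$ depends only on $P$. It is a nonzero polynomial, since $q(0)=1$, and $q(1)=\det A=\pm P(0)\neq0$.

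Take $S=\mathbb A^1\setminus q^{-1}(0)$. This is an open subset of the affine line with finitely many points removed, so it is a connected curve containing both $0$ and $1$. Define
\[
F:S\times\mathcal F_P\longrightarrow\GL_d,\qquad F(s,A)=(1-s)I+sA.
\]
By construction $\det F(s,A)=q(s)\neq0$, so $F$ is a morphism of varieties into $\GL_d$. Moreover $F_1=i_P$ and $F_0$ is the constant map to $I$.

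By Lemma~\ref{lem:connected-family}, $i_P^*=F_1^*=F_0^*$ on every $H^i$. The map $F_0$ factors through a point $\mathcal F_P\to\operatorname{Spec}k\to\GL_d$. Since $H^i(\operatorname{Spec}k)=0$ for $i>0$, the map $F_0^*$ vanishes in positive degrees, and hence so does $i_P^*$.

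The main obstacle is the legitimacy of applying Lemma~\ref{lem:connected-family}, which relies on the Künneth decomposition, with $X=\mathcal F_P$. This fiber may be singular and reducible, but it is still a variety (reduced, finite type), so the Künneth formula for $\mathbb Q_\ell$-cohomology of varieties over an algebraically closed field still applies. If one is worried about Künneth, the fallback is to argue directly: restrict along $\{s\}\times\mathcal F_P\hookrightarrow S\times\mathcal F_P$ and compare the restrictions at $0$ and $1$. These agree because the map $S\times\mathcal F_P\to\mathcal F_P$ induces an isomorphism on $H^0(S)\otimes{-}$ components. This is exactly the content of Lemma~\ref{lem:connected-family}, which we take as given.
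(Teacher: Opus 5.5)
Your proposal is correct and follows the paper's proof essentially verbatim: the same curve $S=\mathbb A^1\setminus q^{-1}(0)$, the same interpolation $F(s,A)=(1-s)I+sA$, and an appeal to Lemma~\ref{lem:connected-family}. Your remark that $F_0$ factors through $\operatorname{Spec}k$, so that $F_0^*$ vanishes in positive degrees, spells out a step the paper leaves implicit.
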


\begin{proof}
If $\lambda_1,\ldots,\lambda_d$ are the roots of $P$, counted with multiplicity, set
\[
q(t)=\prod_{j=1}^d(1-t+t\lambda_j),\qquad
S=\mathbb A^1_k\setminus q^{-1}(0).
\]
The curve $S$ is connected and contains $0$ and $1$. As in \eqref{eq:qpoly},
\[
F:S\times\mathcal F_P\longrightarrow\GL_d,
\qquad F(t,A)=(1-t)I+tA
\]
is a morphism. Its restrictions at $0$ and $1$ are the constant map with value $I$ and $i_P$, respectively. Lemma~\ref{lem:connected-family} proves the assertion. The fiber need not be smooth.
\end{proof}

\begin{lemma}[The stable degree-three class]\label{lem:etale-generator}
For $d\geq2$, one has $H^2(\GL_d)=0$ and $\dim H^3(\GL_d)=1$. Standard block inclusions $\GL_d\hookrightarrow\GL_{d+1}$ and the inclusion $\SL_2\hookrightarrow\GL_2$ induce isomorphisms on $H^3$.

Fix a nonzero class $\eta\in H^3(\SL_2)$, and let $\eta_d\in H^3(\GL_d)$ be the compatible classes restricting to $\eta$. With $\eta_1=0$, these classes satisfy
\begin{equation}\label{eq:etale-primitive}
m^*\eta_d=\operatorname{pr}_1^*\eta_d+
\operatorname{pr}_2^*\eta_d,
\end{equation}
where $m$ is multiplication on $\GL_d$.
\end{lemma}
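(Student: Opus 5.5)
We plan to compute the low-degree étale cohomology of $\GL_d$ and $\SL_2$ through explicit geometric decompositions, using only the tools listed above: Künneth, homotopy invariance, localization with purity, and Lemma~\ref{lem:connected-family}.

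\emph{Step 1: $\SL_2$.} Consider the projection $\SL_2\to\mathbb A^2\setminus\{0\}$ sending a matrix to its first column. Its fibers are affine lines: given a first column $(a,c)$, the second column $(b,e)$ is constrained by the single affine-linear condition $ae-bc=1$. Locally over the base this is a Zariski-locally trivial $\mathbb A^1$-bundle, so homotopy invariance identifies $H^*(\SL_2)$ with $H^*(\mathbb A^2\setminus\{0\})$. Localization with purity for the closed point $0\in\mathbb A^2$ then gives $H^*(\mathbb A^2\setminus\{0\})=\mathbb Q_\ell$ in degrees $0$ and $3$ and zero otherwise. Hence $\dim H^3(\SL_2)=1$ and $H^1=H^2=0$.

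\emph{Step 2: $\GL_d$.} As varieties (not as groups), $\GL_d\cong\SL_d\times\mathbb G_m$ via $A\mapsto(A\,\diag(\det A^{-1},1,\dots,1),\det A)$. Moreover $H^*(\mathbb G_m)$ is $\mathbb Q_\ell$ in degrees $0$ and $1$. For $\SL_d$, the first-column map $\SL_d\to\mathbb A^d\setminus\{0\}$ has fibers that are affine bundles over $\SL_{d-1}$; concretely, the stabilizer of $e_1$ is $\SL_{d-1}\ltimes\mathbb G_a^{d-1}$. The map is Zariski-locally trivial. Its base has cohomology only in degrees $0$ and $2d-1\geq5$ when $d\geq3$. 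Running the Leray (or Serre-type) spectral sequence in low degrees shows that restriction $H^i(\SL_d)\to H^i(\SL_{d-1})$ is an isomorphism for $i\leq3$. By induction, $H^1(\SL_d)=H^2(\SL_d)=0$ and $H^3(\SL_d)\cong H^3(\SL_2)$ via the block inclusion. Künneth then gives $H^2(\GL_d)=H^2(\SL_d)\oplus H^1(\SL_d)\otimes H^1(\mathbb G_m)=0$ and $H^3(\GL_d)=H^3(\SL_d)\oplus H^2(\SL_d)\otimes H^1(\mathbb G_m)$, which is one-dimensional. The same bookkeeping shows that the block inclusions and $\SL_2\hookrightarrow\GL_2$ induce isomorphisms on $H^3$: the block inclusion $\GL_{d}\to\GL_{d+1}$ is compatible with the product decompositions up to a connected family, which is handled by Lemma~\ref{lem:connected-family}. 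The main technical point, which I expect to be the real obstacle, is justifying the Leray argument for a Zariski-locally trivial but not globally trivial fibration with only the listed tools. As a fallback, I would use localization directly: remove the closed subvariety where the first coordinate of the column vanishes and use purity to compare degrees. The gap $2d-1\geq5>4$ is what makes degrees $\leq3$ untouched.

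\emph{Step 3: primitivity.} By Künneth and $H^1(\GL_d)\otimes H^2=0$, $H^2(\GL_d)=0$, we have
\[
H^3(\GL_d\times\GL_d)=H^3\otimes H^0\oplus H^0\otimes H^3\oplus\bigl(H^1\otimes H^2\oplus H^2\otimes H^1\bigr),
\]
and the mixed terms vanish. So $m^*\eta_d=a\,\mathrm{pr}_1^*\eta_d+b\,\mathrm{pr}_2^*\eta_d$. Restricting along $g\mapsto(g,I)$ and $g\mapsto(I,g)$, which compose with $m$ to the identity, gives $a=b=1$, proving \eqref{eq:etale-primitive}. For $d=1$ the class $\eta_1=0$ and the identity is trivial. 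The compatibility of the $\eta_d$ with block inclusions follows from the isomorphisms of Step 2, normalized by restriction to $\SL_2$.
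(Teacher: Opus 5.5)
Your proof is correct, and its core is the same as the paper's. Both use a column map to $\mathbb A^N\setminus\{0\}$, localization and purity on the base, and the Leray spectral sequence with the gap $2N-1\geq5$ for $N\geq3$. For \eqref{eq:etale-primitive}, both use the K\"unneth decomposition with vanishing mixed terms, followed by restriction to the two axes.

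The difference is where $\mathbb G_m$ is handled. The paper fibers $\GL_N$ itself over $\mathbb A^N\setminus\{0\}$ by the last column, with fiber $\GL_{N-1}\times\mathbb A^{N-1}$. At $N=2$ that fiber has a class in degree one, so $H^2(\GL_2)=0$ and $H^3(\GL_2)=\pi^*H^3(\mathbb A^2\setminus\{0\})$ are read off a small spectral sequence. The isomorphism for $\SL_2\hookrightarrow\GL_2$ then comes from comparing with the $\mathbb A^1$-bundle $\SL_2\to\mathbb A^2\setminus\{0\}$. You instead split $\GL_d\cong\SL_d\times\mathbb G_m$ as varieties, run the induction on $\SL_d$, and recover $\GL_d$ by K\"unneth; your base case is then pure homotopy invariance. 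In your version, $\SL_2\hookrightarrow\GL_2$ is the slice $\SL_2\times\{1\}$, so its effect on $H^3$ is visibly an isomorphism. Your splitting is also strictly compatible with the upper-left block inclusion, which becomes (block inclusion)$\times\mathrm{id}_{\mathbb G_m}$. Lemma~\ref{lem:connected-family} is needed only to pass between these and the lower-right blocks produced by your first-column fiber. The paper's route avoids the auxiliary splitting and gets block stability directly from the fiber.

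The step you flagged as the main obstacle is not one, and no localization fallback is needed. The first-column map $\SL_d\to\mathbb A^d\setminus\{0\}$ is a Zariski-locally trivial torsor under the connected stabilizer $P\cong\SL_{d-1}\ltimes\mathbb G_a^{d-1}$ of $e_1$. Its transition maps are left translations by points of $P$, which act trivially on $H^*(P)$ by Lemma~\ref{lem:connected-family}. Hence the sheaves $R^q\pi_*\mathbb Q_\ell$ are constant and $E_2^{p,q}=H^p(\mathbb A^d\setminus\{0\})\otimes H^q(P)$, so your low-degree reading of the spectral sequence goes through. This is exactly the one-sentence justification the paper gives.
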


\begin{proof}
Localization and purity give
\[
H^i(\mathbb A^N_k\setminus\{0\})=
\begin{cases}
\mathbb Q_\ell,&i=0,\ 2N-1,\\
0,&\text{otherwise},
\end{cases}
\]
with the Tate twist omitted. The last-column map
\[
\GL_N\longrightarrow\mathbb A^N_k\setminus\{0\}
\]
is Zariski locally trivial, with fiber the stabilizer of the last basis vector, isomorphic as a variety to $\GL_{N-1}\times\mathbb A^{N-1}$. Its structure group is this connected stabilizer. Lemma~\ref{lem:connected-family} shows that its action on the cohomology of the fiber is trivial, so the higher direct image sheaves are constant.

For $N=2$, the fiber has cohomology only in degrees $0$ and $1$. The Leray spectral sequence therefore gives $H^2(\GL_2)=0$ and identifies $H^3(\GL_2)$ with the pullback of $H^3(\mathbb A^2\setminus\{0\})$. The corresponding map $\SL_2\to\mathbb A^2\setminus\{0\}$ is a locally trivial $\mathbb A^1$-bundle and induces an isomorphism on cohomology. This proves the assertion about $\SL_2\hookrightarrow\GL_2$.

For $N\geq3$, the base has zero cohomology in degrees $1$ through $4$. The same spectral sequence identifies $H^i(\GL_N)$ with the cohomology of its fiber for $i=2,3$. Homotopy invariance then gives the asserted stability under the standard block inclusion. Finally, since $H^2(\GL_d)=0$, the K\"unneth decomposition in degree three on $\GL_d\times\GL_d$ has only the two summands of bidegrees $(3,0)$ and $(0,3)$. Restriction of $m$ to either factor is the identity, proving \eqref{eq:etale-primitive}.
\end{proof}

\subsection{The index of an \texorpdfstring{$\SL_2$}{SL2}-representation}

\begin{proposition}[A nonzero degree-three pullback in every characteristic]\label{prop:etale-index}
Let $r:\SL_2\to\GL_d$ be an algebraic representation, and let $n_1,\ldots,n_d\in\Z$ be its weights on the torus $\diag(t,t^{-1})$. Then
\begin{equation}\label{eq:etale-index}
r^*\eta_d=D(r)\eta,
\qquad D(r)=\frac12\sum_{i=1}^d n_i^2.
\end{equation}
If $r$ is nontrivial, $D(r)$ is a positive integer and hence is nonzero in $\mathbb Q_\ell$.
\end{proposition}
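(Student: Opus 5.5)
Since $H^3(\SL_2)$ is one-dimensional (Lemma~\ref{lem:etale-generator}), $r^*\eta_d=c(r)\eta$ for some scalar $c(r)\in\mathbb Q_\ell$. The plan is to show that $c$ is additive in $r$ and depends only on $r$ restricted to a small subgroup, reducing to explicit small cases. The computation splits into three steps.

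\emph{Additivity under direct sums and independence of filtrations.} If $r=r_1\oplus r_2$ with $r_i$ of dimension $d_i$, then $r=m\circ(j_1r_1,j_2r_2)$, where $j_1,j_2$ are the block inclusions $\GL_{d_1},\GL_{d_2}\hookrightarrow\GL_d$. These commute, so $r$ factors as the product of the two homomorphisms $j_1r_1$ and $j_2r_2$. By \eqref{eq:etale-primitive} and the diagonal, $r^*\eta_d=(j_1r_1)^*\eta_d+(j_2r_2)^*\eta_d$. Block inclusions pull $\eta_d$ back to $\eta_{d_i}$, by the stability in Lemma~\ref{lem:etale-generator}; for a non-standard-positioned block, a permutation conjugation is used, which acts trivially by Lemma~\ref{lem:connected-family}. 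Hence $c(r_1\oplus r_2)=c(r_1)+c(r_2)$.

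In positive characteristic $r$ need not be semisimple. Put $r$ in block upper triangular form along a composition series. The map $\SL_2\times\mathbb A^1\to\GL_d$ obtained by conjugating with $\diag(t^{N}, \dots)$-type scalings and letting $t\to0$ gives a connected family, parametrized by $\mathbb A^1$, from $r$ to the direct sum of its composition factors. The family is the conjugate $g_t r g_t^{-1}$ for $t\neq0$, and the off-diagonal blocks are scaled by positive powers of $t$, so it extends to $t=0$. Lemma~\ref{lem:connected-family} then shows that $c(r)$ equals $c(r^{ss})$. The weights of $r$ and $r^{ss}$ also agree, so $D$ is compatible.

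\emph{Reduction to weights.} Both $c$ and $D$ are additive. It remains to check $c(L)=D(L)$ for each irreducible $L$. This is the main obstacle, since irreducibles in characteristic $p$ are Steinberg tensor products of Frobenius twists and are not directly computable.

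My first idea is to replace $L$ by any representation with the same character and apply the degeneration argument above, since $c$ depends only on the semisimplification. Every character is a $\Z$-combination of characters of Weyl modules $V(n)=\operatorname{Sym}^n$, which are defined over $\Z$. Additivity in the Grothendieck group then reduces the claim to $c(\operatorname{Sym}^n)=D(\operatorname{Sym}^n)$, because both sides are additive functions of the character. Strictly, $c$ is defined only on actual representations, but both sides are additive on short exact sequences and so extend to the Grothendieck group.

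\emph{Computing $c(\operatorname{Sym}^n)$.} For $\operatorname{Sym}^n$ I would use a specialization argument. The class $\eta_d$ and the representation $\operatorname{Sym}^n$ are defined over $\Z[1/\ell]$, and smooth base change comparison shows that $c(\operatorname{Sym}^n)$ is independent of the characteristic. In characteristic zero, the comparison with singular cohomology identifies $c$ with the ratio of periods computed in Lemma~\ref{lem:period}. That ratio is $\tr(dr(H)^2)/\tr(H^2)$, the Dynkin index, which equals $\sum n_i^2/2$ because $H$ acts by the weights $n_i$ and $\tr(H^2)=2$ in the standard representation.

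If one prefers to avoid comparison theorems, there is a purely étale alternative: compute $c(\operatorname{Sym}^2)$ and use the tensor formula $c(V\otimes W)=\dim W\,c(V)+\dim V\,c(W)$. That formula would come from the Kronecker product map $\GL_a\times\GL_b\to\GL_{ab}$ restricted to each factor, which is a block sum of copies. I expect this base-change step to be the most delicate point.

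Positivity is immediate. Weights of $\SL_2$-representations are symmetric under $n\mapsto -n$, so $\sum n_i^2$ is even. If $r$ is nontrivial, some weight is nonzero, because a representation with all weights zero has all composition factors trivial and then factors through a unipotent group, and a nontrivial such action of $\SL_2$ is impossible. Hence $D(r)$ is a positive integer.
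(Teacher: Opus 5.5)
Your proof is correct in outline, and its framework matches the paper's. Both use additivity via the primitivity relation~\eqref{eq:etale-primitive} and block inclusions, invariance under degeneration to the semisimplification, passage to the Grothendieck ring, and a positivity argument. For positivity, the paper shows directly that vanishing weights put the torus, and hence both root subgroups, in the kernel. Your route through trivial composition factors and a unipotent image works equally well.

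The real difference is how you evaluate $c$ on a spanning set. The paper stays over the given algebraically closed field. Writing $r_V\otimes r_W$ as the pointwise product of the commuting homomorphisms $r_V\otimes I_W$ and $I_V\otimes r_W$ gives $D(V\otimes W)=\dim(W)D(V)+\dim(V)D(W)$. So $D$ is a derivation of $R\simeq\Z[z+z^{-1}]$ at the dimension homomorphism. It must then equal $S(f)=\frac12(z\,d/dz)^2f|_{z=1}$, because both take the value $1$ on the polynomial generator $[E]$.

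Your main route instead lifts $\operatorname{Sym}^n$ over $\Z[1/\ell]$, specializes to characteristic zero, and reads $c$ off as the Dynkin index from the de~Rham period of Section~\ref{sec:topology}. This gives a conceptual identification of the \'etale index with the topological period, but it comes at a real cost. Smooth base change by itself gives no specialization isomorphisms for the non-proper schemes $\GL_d$ and $\SL_2$ over $\Z[1/\ell]$. You would have to:
\begin{itemize}
\item prove local constancy of $R^if_*\mathbb Q_\ell$, for instance via a relative normal-crossings compactification, or by rerunning the fibration argument of Lemma~\ref{lem:etale-generator} over the base with relative purity;
\item check that specialization is compatible with block inclusions, so that the classes $\eta_d$ correspond;
\item invoke the Artin and de~Rham comparisons.
\end{itemize}

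Your ``purely \'etale alternative'' is exactly the paper's argument, and it is simpler than you indicate: $\operatorname{Sym}^2$ needs no separate computation. You have $c(E)=1$ from the normalization and $c=0$ on trivial representations. The identity $[E]^2=[\operatorname{Sym}^2E]+1$ in the Grothendieck ring then gives $c(\operatorname{Sym}^2E)=4$. The identity $[E][\operatorname{Sym}^nE]=[\operatorname{Sym}^{n+1}E]+[\operatorname{Sym}^{n-1}E]$ determines every $c(\operatorname{Sym}^nE)$ recursively. Equivalently, a derivation of a polynomial ring is fixed by its value on the generator. This is why the paper needs no base change at all.

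One minor slip: in characteristic $p$, $\operatorname{Sym}^nE$ is the induced module, dual to the Weyl module. Only its character enters, so this is harmless.
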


\begin{proof}
Define $D(r)\in\mathbb Q_\ell$ by the first equality in \eqref{eq:etale-index}. We first establish its behavior on exact sequences and tensor products. For a short exact sequence of representations, choose a basis adapted to the invariant subspace and write
\[
r(g)=\begin{pmatrix}r_1(g)&B(g)\\0&r_2(g)\end{pmatrix}.
\]
The matrices
\[
r_t(g)=\begin{pmatrix}r_1(g)&tB(g)\\0&r_2(g)\end{pmatrix}
\qquad(t\in\mathbb A^1)
\]
define a morphism $\mathbb A^1\times\SL_2\to\GL_d$, specializing to $r$ and $r_1\oplus r_2$. Lemma~\ref{lem:connected-family}, block stability, and \eqref{eq:etale-primitive} imply
\begin{equation}\label{eq:index-additive}
D(r)=D(r_1)+D(r_2).
\end{equation}
Indeed, a block sum is the pointwise product of the two representations placed in complementary blocks. These block inclusions have the same pullback as the standard ones, since conjugation acts trivially on cohomology.

Likewise, writing a tensor product as the pointwise product of $r_V\otimes I_W$ and $I_V\otimes r_W$ gives
\begin{equation}\label{eq:index-tensor}
D(V\otimes W)=\dim(W)D(V)+\dim(V)D(W).
\end{equation}
Here each factor is conjugate to a direct sum of copies of the indicated representation. For the standard two-dimensional module $E$, the normalization gives $D(E)=1$.

Let $R$ be the Grothendieck ring of finite-dimensional rational $\SL_2$-modules, with relations from all short exact sequences. The formal character identifies
\begin{equation}\label{eq:sl2-character-ring}
R\simeq\Z[z,z^{-1}]^{z\leftrightarrow z^{-1}}
=\Z[z+z^{-1}],
\qquad [E]\longmapsto z+z^{-1}.
\end{equation}
This holds in arbitrary characteristic; see \cite[Theorem~22.38]{MilneGroups}. To recall why it is enough here, highest weights make the characters of simple modules linearly independent, so the character map is injective. Weyl symmetry puts its image in the displayed invariant ring, and the character of $E$ generates that ring, giving surjectivity.

Equations~\eqref{eq:index-additive} and~\eqref{eq:index-tensor} say that $D$ is a derivation of $R$ at the dimension homomorphism. On formal characters, define
\[
S(f)=\frac12\left.\left(z\frac{d}{dz}\right)^2f(z)\right|_{z=1}.
\]
These are derivatives of Laurent polynomials over $\mathbb Q_\ell$, not derivatives of functions over $k$. Symmetry under $z\leftrightarrow z^{-1}$ gives $f'(1)=0$. Thus $S$ obeys the same product rule as $D$, with dimension given by evaluation at $1$, and $S(z+z^{-1})=1$. The polynomial-ring description \eqref{eq:sl2-character-ring} gives $D=S$, proving the formula.

The weights occur in opposite pairs, so $\frac12\sum_i n_i^2$ is a nonnegative integer. If all weights vanished, the torus would lie in the kernel of $r$. Choose $a\in k^*$ with $a^2\neq1$. Commuting $\diag(a,a^{-1})$ with upper and lower unipotent elements shows that both root subgroups would also lie in the kernel. They generate $\SL_2$, so $r$ would be trivial. This proves strict positivity for a nontrivial representation.
\end{proof}

\begin{remark}[Frobenius twists]\label{rem:frobenius-index}
In characteristic $p>0$, the $e$th Frobenius twist of the standard representation has weights $p^e,-p^e$, and hence index $p^{2e}\neq0$ in $\mathbb Q_\ell$. Its differential is zero when $e>0$. Thus the obstruction detects representations that an argument using only differentials would miss. The weights in \eqref{eq:etale-index} are integer characters of a torus; they are not reduced modulo $p$.
\end{remark}

\subsection{Unipotent kernels and the proof of the theorem}

In positive characteristic a connected affine group need not have a Levi subgroup. The following observation allows us to pass instead to the image on the semisimplification of its defining module.

\begin{lemma}[Pullback through a unipotent kernel]\label{lem:unipotent-kernel}
Let $q:G\to L$ be a surjective homomorphism of connected affine algebraic groups. If its scheme-theoretic kernel is unipotent, then
\[
q^*:H^i(L)\longrightarrow H^i(G)
\]
is injective for every $i$.
\end{lemma}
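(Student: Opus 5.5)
The plan is to show that $q^*$ is split injective by combining two standard facts. First, a torsor under a split unipotent group is an affine-space bundle, so it induces an isomorphism on cohomology. Second, a finite étale map of degree $n$ admits a transfer with $\mathrm{tr}\circ\pi^*=n$, which is invertible in $\mathbb Q_\ell$. The scheme-theoretic kernel $N=\ker q$ is a unipotent group scheme, possibly non-reduced and possibly disconnected. I will handle its reduced identity component and its finite part separately.

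\emph{Step 1: factor $q$.} Let $N_{\mathrm{red}}^\circ$ be the reduced identity component of $N$. It is a smooth connected unipotent group, and it is normal in $G$ because $G$ is connected and normalizes $N$. I factor $q$ as
\[
G\xrightarrow{\;a\;}G'=G/N_{\mathrm{red}}^\circ\xrightarrow{\;b\;}L.
\]
The kernel of $b$ is the finite unipotent group scheme $N/N_{\mathrm{red}}^\circ$.

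\emph{Step 2: the map $a$.} Since $N_{\mathrm{red}}^\circ$ is smooth, connected and unipotent over the algebraically closed field $k$, it is split. That is, it has a composition series with quotients $\mathbb G_a$. The quotient map $a$ is therefore a successive $\mathbb G_a$-torsor. Torsors under $\mathbb G_a$ over affine varieties are trivial, because $H^1(X,\mathcal O)=0$ for affine $X$; the intermediate quotients are affine since the groups are affine. So $a$ is an iterated $\mathbb A^1$-bundle, and homotopy invariance makes $a^*$ an isomorphism.

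\emph{Step 3: the map $b$.} Now $b$ is surjective with finite kernel $F$. If $F$ is étale, then $b$ is a finite étale cover. Transfer gives $b_*b^*=\deg(b)$, so $b^*$ is injective on $\mathbb Q_\ell$-cohomology.

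In general $F$ is an extension of an étale part by an infinitesimal part. An infinitesimal quotient is a purely inseparable homeomorphism. To make this precise, I will factor $b$ as $G'\to G'/F^\circ\to L$. Here $G'\to G'/F^\circ$ is a finite, radicial, surjective morphism. Such a morphism is a universal homeomorphism, so by topological invariance of the étale site it induces an isomorphism on étale cohomology. The remaining map $G'/F^\circ\to L$ has étale kernel, so the transfer argument applies.

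Composing, $q^*=a^*\circ b^*$ is injective.

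The main obstacle is the bookkeeping for non-reduced kernels in Step 3, specifically identifying the infinitesimal part as inducing a universal homeomorphism. It also has to be checked that each quotient in the chain is a smooth connected affine group, so that the earlier steps apply. Unipotence of $N$ is used only in Step 2, to guarantee that the smooth connected part is split unipotent. For the étale part, degree-$n$ transfer suffices regardless of $n$, even when $p$ divides $n$, because the coefficients $\mathbb Q_\ell$ have characteristic zero.
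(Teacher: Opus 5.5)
Your proposal is correct and follows the paper's proof essentially step for step. You quotient by $(N_{\mathrm{red}})^\circ$, which gives an affine-space bundle inducing an isomorphism on cohomology. You then factor the remaining isogeny through the identity component of its finite kernel, obtaining a finite universal homeomorphism followed by a finite \'etale cover whose pullback is injective by transfer with $\mathbb{Q}_\ell$ coefficients.

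The only difference is in Step~2, and either version works. You prove triviality of $\mathbb{G}_a$-torsors over affine bases; for the intermediate quotients to be affine groups, take the filtration of $U$ by subgroups normal in $G$. The paper instead cites Zariski-local triviality of torsors under split unipotent groups.
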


\begin{proof}
Let $N=\ker(q)$ and let $U=(N_{\mathrm{red}})^\circ$. Over the algebraically closed field $k$, this is a smooth connected normal unipotent subgroup of $G$. Such a group is split, is isomorphic as a variety to an affine space, and its torsors are Zariski locally trivial. Consequently, the quotient map $G\to G/U$ induces an isomorphism on cohomology.

The remaining map $G/U\to L$ is an isogeny, with finite kernel $N/U$. Quotienting first by the connected component of this finite group scheme factors the isogeny as a finite universal homeomorphism followed by a finite \'etale covering. The first induces an isomorphism on \'etale cohomology. Pullback for the second is injective, because its composite with transfer is multiplication by the degree of the covering, an invertible scalar in $\mathbb Q_\ell$. This also explains why inseparability of $q$ causes no difficulty. The same isogeny argument is given with finite coefficients in \cite[Proposition~3.3]{ChalupnikKowalski}.
\end{proof}

\begin{proof}[Proof of Theorem~\ref{thm:main} in arbitrary characteristic]
Extend the original field to its algebraic closure $k$, and let $K$ be the Zariski closure of $H$ in $\GL_d(k)$. The coefficients of the characteristic polynomial are regular functions, so the hypothesis extends from $H$ to $K$. Put $G=K^\circ$ and let $j:G\hookrightarrow\GL_d$ be the inclusion. The case $d=1$ is immediate, so assume $d\geq2$.

The map $g\mapsto xj(g)$ factors through $\mathcal F_{\chi_x}$. Lemma~\ref{lem:etale-fiber} and translation invariance give
\begin{equation}\label{eq:vanishing-defining-class}
j^*\eta_d=0.
\end{equation}
Choose a composition series of the $G$-module $k^d$ and a basis adapted to it. In this basis $j$ is block upper triangular. Let $j_{\mathrm{ss}}$ be the block diagonal representation on the direct sum of the simple quotients. If there are $s$ blocks, conjugation by the block scalar matrix with exponents $s-1,s-2,\ldots,0$ multiplies block $(a,b)$, for $a<b$, by $t^{b-a}$. This extends to a morphism $\mathbb A^1\times G\to\GL_d$, specializing to $j$ at $1$ and $j_{\mathrm{ss}}$ at $0$. Hence
\begin{equation}\label{eq:semisimplified-class}
j_{\mathrm{ss}}^*\eta_d=j^*\eta_d=0.
\end{equation}

Let $L=j_{\mathrm{ss}}(G)$ be the closed image, with its reduced structure, and write $j_{\mathrm{ss}}=i\circ q$, where $q:G\to L$ is surjective and $i:L\hookrightarrow\GL_d$. The kernel of $q$ is contained in the block upper unitriangular group and is therefore unipotent. Moreover, $L$ is connected and reductive: its defining module is faithful and completely reducible, and the unipotent radical acts trivially on every simple summand. Indeed, the fixed space of a normal unipotent subgroup in a simple module is nonzero and invariant, hence is the whole module.

Suppose that $G$ is not solvable. Since $\ker(q)$ is unipotent, $L$ is not solvable either. A simply connected cover of the derived group of this connected reductive group supplies a nontrivial root homomorphism
\[
\alpha:\SL_2\longrightarrow L;
\]
see \cite{Borel,MilneGroups}. The representation $i\circ\alpha$ is nontrivial, so Proposition~\ref{prop:etale-index} gives
\[
\alpha^*i^*\eta_d\neq0.
\]
In particular $i^*\eta_d\neq0$. By Lemma~\ref{lem:unipotent-kernel}, $q^*$ is injective, and therefore
\[
j_{\mathrm{ss}}^*\eta_d=q^*i^*\eta_d\neq0,
\]
contradicting \eqref{eq:semisimplified-class}. Thus $K^\circ$ is solvable. As before, $H\cap K^\circ$ is a solvable subgroup of finite index in $H$.
\end{proof}

\begin{remark}
The argument imposes no restriction on the characteristic or its size relative to $d$. It uses rational $\ell$-adic coefficients with $\ell$ different from the characteristic, so a nonzero integer index cannot disappear. No Levi decomposition or complete reducibility of the original representation is assumed.
\end{remark}

\section{Applications to variants of Wiegold's conjecture}\label{sec:primitive-applications}

We record, without proofs, results on the linear variants of Wiegold's conjecture discussed in \cite[Sections~2 and~4--8]{Gelander}, following our companion paper~\cite{PrimitiveSpectra}. Corollaries~\ref{cor:finite-spectrum-coset} and~\ref{cor:unipotent-coset} supply the input on proper free factors. Fixed-space and Nielsen arguments yield the quantitative unipotence criteria, and property~(T) gives virtual unipotence on proper factors for finite automorphism orbits when \rev{$n\geq5$}. Elementary centralizer arguments, independent of the coset theorem and property~(T), give Jordan restrictions and primitive quasi-unipotence already for $n\geq3$. \rev{For $n\geq4$, the connected solvable radical of the inner-image closure is unipotent; the remaining obstruction lies in its semisimple quotient. We record this reduction and the constraints imposed on that quotient by Nielsen subgroups.} Subsection~\ref{sec:compact-measure-applications} gives the quantitative classification of invariant measures for compact groups, \rev{together with dense-orbit conclusions}. \rev{These results and the rank-three spectral criterion in Subsection~\ref{sec:compact-spectral-applications} are proved separately by compact redundancy methods.}

Throughout this section, \rev{$n\geq3$ unless otherwise stated}, the ground field is $\C$, and $P_n\subset F_n$ is the set of primitive elements, namely those belonging to a free basis. For a representation $\rho:F_n\to\GL_d(\C)$, put
\[
\mathcal S(\rho)=\{\chi_{\rho(p)}:p\in P_n\},
\qquad K=\Zcl{\rho(F_n)}.
\]
\rev{We call $\mathcal S(\rho)$ the \emph{primitive characteristic-polynomial spectrum}, or simply the \emph{primitive spectrum}, of $\rho$.} We call $\rho$ \emph{primitive-unipotent} if every $\rho(p)$, $p\in P_n$, is unipotent, and \emph{unipotent} if its whole image is conjugate into the upper unitriangular group. We call $\rho$ \emph{Zariski redundant} if some proper free factor $A<F_n$ satisfies $\Zcl{\rho(A)}=K$.

\subsection{Finite primitive spectra and proper free factors}

Zelmanov's conjecture, as stated in \cite[Section~4]{Gelander}, asks whether $\#\mathcal S(\rho)=1$ forces $\rho(F_n)$ to be virtually solvable. The primitive-torsion variant asks for the same conclusion when every primitive image has finite order; it is Conjecture~6.2 of \cite{Gelander}. The following theorem gives a common proper-factor restriction.

\Needspace{9\baselineskip}
\begin{theorem}[Proper-free-factor restrictions]\label{thm:primitive-factors}
Let $\rho:F_n\to\GL_d(\C)$.
\begin{enumerate}
\item If $\mathcal S(\rho)$ is finite, then $\rho(A)$ is virtually solvable for every proper free factor $A<F_n$. This applies, in particular, if all primitive images have one characteristic polynomial or if all primitive images have finite order.
\item If $\rho$ is primitive-unipotent, then $\rho(A)$ is unipotent for every proper free factor $A<F_n$.
\item Suppose that $\mathcal S(\rho)$ is finite and that $A<F_n$ is a proper free factor of rank at least two with connected Zariski closure $\Zcl{\rho(A)}$. Then $\rho(A)$ is unipotent. If, in addition, every primitive image has finite order, then $\rho(A)=1$.
\end{enumerate}
\end{theorem}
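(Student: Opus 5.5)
Write $F_n=A*B$ with $B\neq1$. After enlarging $A$ to a free factor of rank $n-1$ and extending a basis $a_1,\dots,a_{n-1}$ of it by a primitive $b$, we may assume $F_n=A*\langle b\rangle$. Every statement passes to free factors of $A$, because a free factor of a free factor is a free factor.

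The key observation is that the coset $b\,A$ consists of primitive elements. For each $w\in A$, the tuple $a_1,\dots,a_{n-1},bw$ is a free basis: it arises from $a_1,\dots,a_{n-1},b$ by Nielsen moves, since $w$ is a word in the $a_i$. Similarly $wb$ is primitive. Hence
\[
\{\chi_{\rho(b)\rho(w)}:w\in A\}\subseteq\mathcal S(\rho).
\]

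For part~1, apply Corollary~\ref{cor:finite-spectrum-coset} with $H=\rho(A)$ and $x=\rho(b)$. This shows that $\Zcl{\rho(A)}^\circ$ is solvable and that $\rho(A)$ is virtually solvable. If all primitive images have finite order, their eigenvalues are roots of unity. I then need a uniform bound on the orders to get finitely many characteristic polynomials. I would obtain it by a Zariski argument. The set $\{g:\chi_g\text{ has only root-of-unity roots}\}$ is not closed. However, the coset $\rho(b)\Zcl{\rho(A)}$ has characteristic-polynomial coefficients forming a constructible set, and a countable set of characteristic polynomials meeting the image of an irreducible variety is constant on each component. Since countably many fibers cover a Zariski dense subset, one of them contains a dense subset of a component. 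This reduces to finitely many polynomials on the closure. The one-polynomial case is immediate.

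For part~2, the coset $\rho(b)\rho(A)$ is unipotent, and so is $\rho(A)$, since the $a_i$ are primitive. I want to apply Corollary~\ref{cor:unipotent-coset}, which needs $H$ to be generated by unipotents. Here $H=\rho(A)$ is generated by the unipotent matrices $\rho(a_i)$. The corollary therefore gives triangularizability of $\rho(A)$, that is, unipotence.

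For part~3, write $G=\Zcl{\rho(A)}$, which is connected. By the argument of Corollary~\ref{cor:finite-spectrum-coset}, $\chi$ is constant, equal to $\chi_{\rho(b)}$, on $\rho(b)G$, and by Theorem~\ref{thm:main} $G$ is solvable. To get unipotence, I would use the other primitives. Since $A$ has rank at least two, for $a_1,a_2\in A$ the elements $a_1a_2^m$ are primitive for all $m$. So for every basis element $a$ of $A$, the characteristic polynomial is constant on $\rho(a)\,\Zcl{\langle\rho(a')\rangle}$ for another basis element $a'$. More usefully, I would use the left-translation constancy of the whole group $G$ applied with $x=\rho(b)$. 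In a Lie--Kolchin triangular form of connected solvable $G$, the diagonal of $\rho(b)g$ is not controlled by that alone, because $\rho(b)$ need not be triangular.

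The alternative I would try first is to replace $b$ by the primitives $a_1^{m}$ and $a_1$ themselves. The coset $\rho(a_1)\,\Zcl{\rho(\langle a_2,\dots\rangle)}$ has constant characteristic polynomial. Iterating, the diagonal characters of the torus part of $G$ must be constant on a dense subgroup, hence trivial, so $G$ is unipotent. For this I would realize the torus $T$ of $G$ as a closure of powers inside the constancy locus, and use that a character of $T$ constant on a coset is trivial.

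The finite-order addendum then follows because unipotent elements of finite order in characteristic zero are trivial. The images of a basis of $A$ are primitive, so they are trivial, and hence $\rho(A)=1$.

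The main obstacle is the unipotence step in part~3, namely showing that the torus part of the connected solvable $G$ acts trivially. I expect to need the rank-two hypothesis exactly to make the translating element lie in $G$.
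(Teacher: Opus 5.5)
Your arguments for the main assertion of part~1 and for part~2 are correct and follow the route the paper indicates. The coset $\rho(b)\rho(A)$ consists of primitive images, so Corollary~\ref{cor:finite-spectrum-coset} applies, and so does Corollary~\ref{cor:unipotent-coset} (with $H=\rho(A)$ generated by the unipotent images of a basis), exactly as in the proof of Theorem~\ref{thm:infinite-primitive-spectrum}.

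\textbf{Finite-order clause of part~1.} This clause is not proved. Your countability argument gives nothing: $\rho(A)$ is itself countable, so it is always covered by countably many characteristic-polynomial fibers. Countably many proper closed sets can cover a Zariski-dense set (the points of $\Z\subset\mathbb A^1$). Constancy on a component follows only if the \emph{whole} variety lies in countably many fibers. Finite order is an arithmetic condition, so you need an arithmetic bound. The entries of $\rho(F_n)$ lie in a finitely generated field $L$. For primitive $p$, the coefficients of $\chi_{\rho(p)}$ are algebraic integers in the number field $L\cap\overline{\mathbb Q}$, and all their conjugates have absolute value at most $\binom{d}{j}$. There are only finitely many such algebraic integers, so $\mathcal S(\rho)$ is finite.

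\textbf{Part~3.} The unipotence step is missing, as you acknowledge, and the sketch contains errors. First, $a_1^m$ is not primitive for $|m|\geq2$. Second, the cosets you list carry only finitely many characteristic polynomials, not one, since cyclic closures may be disconnected. Also, do not enlarge $A$ here: connectedness of the closure is not inherited by larger factors, and the coset argument with a complementary basis element works for $A$ itself.

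The missing idea is the one you guess at the end: translate by an element of $A$, not by $b$. The argument then runs as follows.
\begin{enumerate}
\item Part~1 makes the connected group $G$ solvable, so triangularize it. Then $\rho(a_1),\rho(a_2)\in G$ are upper triangular, and the diagonal map $\delta:G\to(\C^*)^d$ is a homomorphism.
\item The diagonal of $\rho(a_1a_2^m)$ is $\delta(\rho(a_1))\delta(\rho(a_2))^m$. Only finitely many multisets of entries occur, and each yields at most $d!$ ordered tuples. Hence two distinct values of $m$ give the same tuple, so $\delta(\rho(a_2))$ has finite order.
\item Pair every basis element of $A$ with another one; this is where the rank-two hypothesis is used. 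Then $\delta(\rho(A))$ is an abelian group generated by finitely many elements of finite order, hence finite.
\item Because $\rho(A)$ is Zariski dense in $G$, the connected set $\delta(G)$ lies in this finite set and contains $1$. So $\delta(G)=1$, and $G$ is unipotent.
\end{enumerate}
Your deduction of $\rho(A)=1$ in the finite-order case then goes through.
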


Compare the restrictions obtained by Formanek and Procesi~\cite{FormanekProcesi} for linear representations of their HNN extensions. The hypotheses here concern only the primitive spectra of a representation of $F_n$; no extension to $\Aut(F_n)$ is assumed. The restrictions apply to all proper free factors, including those obtained after arbitrary Nielsen changes of basis. They give the following conclusions when a proper factor has the full Zariski closure.

\Needspace{9\baselineskip}
\begin{corollary}[The Zariski-redundant case]\label{cor:primitive-redundant}
Let $\rho:F_n\to\GL_d(\C)$ be Zariski redundant.
\begin{enumerate}
\item If $\mathcal S(\rho)$ is finite, then $K^\circ$ is unipotent. In particular, $\rho(F_n)$ is virtually unipotent.
\item If $\mathcal S(\rho)$ consists of one polynomial, then $\rho(F_n)$ is unipotent.
\item If every primitive image has finite order, then $\rho(F_n)$ is finite.
\end{enumerate}
\end{corollary}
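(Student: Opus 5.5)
The plan is to deduce all three parts of Corollary~\ref{cor:primitive-redundant} from Theorem~\ref{thm:primitive-factors}, applied to a proper free factor $A<F_n$ with $\Zcl{\rho(A)}=K$. Since the closures coincide, information about $\rho(A)$ transfers to $K$ and hence to $\rho(F_n)$. The point needing care is that Theorem~\ref{thm:primitive-factors}(3) requires connected closure and rank at least two. So first I will pass to a finite-index situation, which requires adjusting the factor.

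For part~(1), apply Theorem~\ref{thm:primitive-factors}(1) to $A$: $\rho(A)$ is virtually solvable, so $K^\circ=(\Zcl{\rho(A)})^\circ$ is solvable.
\begin{itemize}
\item To get unipotence, I would use the Zariski-redundant structure more fully. Let $K^\circ$ have maximal torus part $T$ and consider the character $K^\circ\to K^\circ/K^\circ_u$.
\item Choose a free basis $a_1,\dots,a_n$ with $A=\langle a_1,\dots,a_{n-1}\rangle$. Then primitive elements such as $a_n w$ with $w\in A$ give the coset $\rho(a_n)\rho(A)$. Its characteristic polynomials form a finite set, so by Corollary~\ref{cor:finite-spectrum-coset}-type reasoning the polynomial is constant on $\rho(a_n)K^\circ$.
\item Likewise, for any $g\in F_n$ and $w$ ranging over suitable conjugates, $xK^\circ$ has constant characteristic polynomial for many $x$.
\item Taking $x$ in $\rho(A)\cap K^\circ$-cosets (available since $\rho(A)$ is dense), the characteristic polynomial is constant on $K^\circ$ itself, equal to $\chi_I$. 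This holds because elements $a_n^{-1}$-type primitive translates of a dense subset of $K^\circ$ exist; this is the main obstacle, namely exhibiting enough primitive elements $p$ with $\rho(p)$ ranging Zariski-densely over $K^\circ$.
\item Then every element of $K^\circ$ is unipotent, so $K^\circ$ is unipotent. I expect to get the density from Theorem~\ref{thm:primitive-factors}(2)-style arguments: the primitive elements $a_n w$ with $w\in A$ have images dense in $\rho(a_n)K$, which meets $K^\circ$-cosets covering a translate.
\end{itemize}

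For part~(2), with one polynomial $p$ on all primitives, the coset $\rho(a_n)\rho(A)$, and hence $\rho(a_n)K$, lies in a single fiber $\mathcal F_p$. Choosing the basis element $a_1\in A$, the elements $a_1^m$ are primitive for $m=\pm1$ only, so instead I use $a_1\,a_n^{k}$-type primitives to compare. The cleaner route is Theorem~\ref{thm:primitive-factors}(2) once primitive-unipotence is shown. To get it, note that $p=\chi_{\rho(a_1)}$ and $\rho(a_1)\in K$, with $\rho(a_n)K=\rho(a_n)\rho(a_1)^{-1}\cdot\rho(a_1)K$. Comparing $\chi$ on $\rho(a_n)\rho(A)$ and on $\rho(a_n)\rho(a_1)\rho(A)$ (both consist of primitive images), and using density, $\chi$ is constant on $\rho(a_n)K$. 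Taking the element $\rho(a_n)\rho(a_n)^{-1}$ is impossible since $a_n\notin A$, so I would instead use the factor $A'=\langle a_2,\dots,a_n\rangle$ symmetrically to reach the identity in $K$. Granting $p=(z-1)^d$, Theorem~\ref{thm:primitive-factors}(2) gives $\rho(A)$ unipotent. Its closure $K$ is then unipotent, so $\rho(F_n)$ is unipotent.

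For part~(3), finite order of primitive images gives a finite spectrum, since the eigenvalues are roots of unity of bounded order by degree-$d$ considerations over $\C$. This is the subtle point: the orders of eigenvalues of matrices in $\GL_d(\C)$ of finite order need not be bounded. I would instead use the trace field being finitely generated, so roots of unity in it have bounded order. Part~(1) then gives $K^\circ$ unipotent. Now $\rho(A)\cap K^\circ$ has finite index in $\rho(A)$, so it is the image of a finite-index subgroup of $A$. Within that subgroup, choose primitive elements of $F_n$: powers $a_i^{m}$ are not primitive, so I would use elements of the form $a_n$ times elements of $\rho^{-1}(K^\circ)\cap A$. This shows unipotent elements of finite order are trivial in suitable cosets, forcing $K^\circ=1$. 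Hence $K$ is finite, and so $\rho(F_n)$ is finite.
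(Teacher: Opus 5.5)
Your proposal has a genuine gap, and it is the same gap in all three parts: you never use that $\rho(a_n)$ already lies in $K$. Zariski redundancy gives $\Zcl{\rho(A)}=K=\Zcl{\rho(F_n)}\ni\rho(a_n)$. Hence the Zariski closure of the primitive coset $\rho(a_n)\rho(A)$ is $\rho(a_n)K=K$ itself, not a translate of it. This is exactly the ``main obstacle'' you identify, and it dissolves in one line. With it, each part is short.

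\begin{itemize}
\item[(1)] The finitely many fibers of $\chi$ that meet $\rho(a_n)\rho(A)$ have Zariski-closed union, and this union contains $K$. So $\chi$ takes finitely many values on $K$. It is therefore constant on the irreducible variety $K^\circ$, with value $\chi_I=(z-1)^d$. Kolchin's theorem then makes $K^\circ$ unipotent. Neither Theorem~\ref{thm:primitive-factors}(1) nor solvability of $K^\circ$ is needed.
\item[(2)] Here $\chi\equiv p$ on all of $K\ni I$, which forces $p=(z-1)^d$. So every element of $K$ is unipotent.
\item[(3)] Once the spectrum is finite, (1) makes $K^\circ$ unipotent. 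Since $K^\circ$ is open in $K$ and $\rho(a_n)\rho(A)$ is dense in $K$, the primitive images lying in $K^\circ$ are dense in $K^\circ$. Each is unipotent of finite order, hence equal to $I$. Therefore $K^\circ=1$.
\end{itemize}

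Without this observation, your substitute steps do not work.

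\begin{itemize}
\item[(1)] Constancy on $\rho(a_n)K^\circ$ gives only the value $\chi_{\rho(a_n)}$, which need not be $(z-1)^d$. The appeal to Theorem~\ref{thm:primitive-factors}(2) is inapplicable, since $\rho$ is not known to be primitive-unipotent.
\item[(2)] Using $\rho(a_n)^{-1}$ is not ``impossible since $a_n\notin A$'': it lies in $K$ regardless. Your symmetric use of $A'=\langle a_2,\dots,a_n\rangle$ is unjustified, because redundancy provides full closure for one factor only. ``Granting $p=(z-1)^d$'' assumes exactly what must be proved.
\item[(3)] The elements $a_nw$ with $\rho(w)\in K^\circ$ have images in the component $\rho(a_n)K^\circ$, not in $K^\circ$. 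Finite order on a nonidentity component does not force $K^\circ=1$. In $K=\left\{\begin{pmatrix}\pm1&u\\0&1\end{pmatrix}:u\in\C\right\}$, every element of the nonidentity component is an involution, yet $K^\circ$ is a nontrivial unipotent group. You must instead take $w\in A$ with $\rho(a_nw)\in K^\circ$. Such $w$ exist only because $\rho(a_n)\rho(A)$ is dense in all of $K$.
\end{itemize}

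Finally, in reducing (3) to finite spectrum, the eigenvalues need not lie in the trace field. The correct statement is that the finitely generated field of matrix entries has only finitely many roots of unity in extensions of degree at most $d$.
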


Here \emph{virtually unipotent} means that a subgroup of finite index is conjugate into the upper unitriangular group. In particular, a Zariski-redundant representation with nonsolvable $K^\circ$ has infinitely many primitive characteristic polynomials.

\begin{revision}
A further consequence of Theorem~\ref{thm:primitive-factors}, recorded in \cite[Corollary~7.2]{PrimitiveSpectra}, is that a representation with one primitive characteristic polynomial is primitive-unipotent as soon as some proper factor of rank at least two has connected Zariski closure. The unipotence criteria below then apply to the whole image.
\end{revision}

\subsection{Quantitative unipotence results}

The Platonov--Potapchik conjecture asserts that every primitive-unipotent representation is unipotent. Write $\PP(n,d)$ and $U(n,d)$ for the varieties of primitive-unipotent and unipotent representations, respectively. The next estimate combines the proper-factor conclusion of Theorem~\ref{thm:primitive-factors} with fixed-space inequalities and simultaneous minima on Nielsen orbit closures.

\Needspace{11\baselineskip}
\begin{theorem}[Irreducible-constituent bound]\label{thm:primitive-constituents}
Let $\sigma:F_n\to\GL_m(\C)$ be a nontrivial irreducible primitive-unipotent representation, and set
\[
b=\min_{p\in P_n}\dim\ker(\sigma(p)-I).
\]
Then
\[
b\geq n,
\qquad
\rev{m\geq b+2\left\lfloor\frac{b}{n-1}\right\rfloor+2\geq n+4.}
\]
In particular, every nontrivial irreducible constituent of a primitive-unipotent representation has dimension at least $n+4$.
\rev{For $n=3$, the sharper estimate is $m\geq2b+1$ when $b$ is odd and $m\geq2b+2$ when $b$ is even; see \cite[Proposition~4.4]{PrimitiveSpectra}.}
\end{theorem}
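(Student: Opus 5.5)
The plan has three stages: lower-bound $b$ using the proper-factor results, then bound $m$ from below by a fixed-space count along a suitable Nielsen configuration.

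\textbf{Step 1: $b\geq n$.} Fix a free basis $x_1,\dots,x_n$ and let $A=\langle x_1,\dots,x_{n-1}\rangle$, a proper free factor. By Theorem~\ref{thm:primitive-factors}(2), $\sigma(A)$ is unipotent, so it has a nonzero common fixed space $W_A=\bigcap_{i<n}\ker(\sigma(x_i)-I)$.

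Suppose some primitive $p$ has $\dim\ker(\sigma(p)-I)=b<n$. Complete $p$ to a basis $p=y_1,y_2,\dots,y_n$. For each $j\geq2$, the pair $\{p,y_j\}$ lies in a proper free factor, which is unipotent. The same holds for the primitive elements $py_j^{k}$. I would compare fixed spaces: the factors $\langle p,y_j\rangle$ each act unipotently, so they stabilize flags inside $\ker(\sigma(p)-I)$.

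Next, use the Nielsen moves $y_j\mapsto y_jp^{k}$. These show that the fixed space of $\sigma(p)$ contains nonzero vectors fixed by each $y_j$, and these vectors can be chosen independent for different $j$. If $b<n$, two of these lines must coincide. The resulting vector is then fixed by $\langle p,y_i,y_j\rangle$. Iterating this over all proper factors containing $p$ should produce a vector fixed by the whole $F_n$. That vector spans an invariant line, so irreducibility together with nontriviality gives a contradiction. Thus $b\geq n$.

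\textbf{Step 2: the bound on $m$.} Choose $p$ realizing the minimum $b$, and choose a basis $p,y_2,\dots,y_n$ whose Nielsen orbit closure minimizes the fixed spaces simultaneously, following the "simultaneous minima" idea. Then consider the invariant flags of the unipotent factors $\langle p,y_j\rangle$.

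The key inequality should be that the image $\operatorname{im}(\sigma(p)-I)$ and the kernel of $\sigma(p)-I$ interact with the $n-1$ partner generators. Irreducibility forces each $\sigma(y_j)$ to move $\ker(\sigma(p)-I)$ off itself. A pigeonhole over the $n-1$ partners then forces at least $\lfloor b/(n-1)\rfloor$ extra dimensions on each of two sides: the top and the bottom of the flag. One further dimension is added at each end because the module is nontrivial and irreducible, so it is not fixed pointwise and has no invariant vector or covector. This yields
\[
m\geq b+2\left\lfloor\frac{b}{n-1}\right\rfloor+2.
\]
With $b\geq n$ the floor is at least $1$, giving $m\geq n+4$.

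\textbf{Step 3: constituents.} For a general primitive-unipotent representation, each irreducible constituent (semisimplify using a composition series) is again primitive-unipotent. Applying the bound to each nontrivial constituent gives the final claim.

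I expect Step 2 to be the main obstacle: making the dimension count rigorous and producing exactly the $2\lfloor b/(n-1)\rfloor$ term. I would first try it in the case $n=3$, using explicit $2\times2$ block forms relative to $\ker(\sigma(p)-I)$, and then generalize to arbitrary $n$.
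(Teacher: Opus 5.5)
Your proposal has genuine gaps in both Step~1 and Step~2. The paper gives no proof of this statement; it cites the companion paper and says only that the estimate combines Theorem~\ref{thm:primitive-factors} with fixed-space inequalities and simultaneous minima on Nielsen orbit closures. You name the same ingredients, but neither inequality is proved.

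\textbf{Step 1.} The moves $y_j\mapsto y_jp^k$ give nothing new, since $\langle p,y_jp^k\rangle=\langle p,y_j\rangle$. Also, vectors fixed by the rank-two factors $\langle p,y_j\rangle$ need not be independent. What does work is to use the rank-$(n-1)$ factors $B_j=\langle y_i:i\neq j\rangle$:
\begin{itemize}
\item the spaces $W_j=\bigcap_{i\neq j}\ker(\sigma(y_i)-I)$ are nonzero by Theorem~\ref{thm:primitive-factors}(2);
\item each meets $\ker(\sigma(y_j)-I)$ only in $V^{F_n}=0$, so they are independent (apply $\sigma(y_i)-I$ to a relation $\sum_j w_j=0$);
\item for $j\geq2$ they lie in $\ker(\sigma(p)-I)$.
\end{itemize}
This yields only $b\geq n-1$. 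Your pigeonhole cannot supply the last dimension. When $b=n-1$, the $n-1$ spaces $W_2,\ldots,W_n$ are independent lines that simply fill $\ker(\sigma(p)-I)$, so no two coincide. In general, linear dependence would not force two lines to coincide. Even a coincidence only gives a vector fixed by $\langle p,y_i,y_j\rangle$, not by all of $F_n$. Some further idea is needed to exclude $\ker(\sigma(p)-I)=W_2\oplus\cdots\oplus W_n$, and your proposal does not contain it.

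\textbf{Step 2.} This step describes the shape of the desired bound rather than proving it.
\begin{itemize}
\item Irreducibility only forbids a proper subspace invariant under all generators at once. It does not force each single $\sigma(y_j)$ to move $\ker(\sigma(p)-I)$ off itself.
\item A basis ``whose Nielsen orbit closure minimizes the fixed spaces'' is not defined. You never say which upper-semicontinuous quantities are minimized, or how the minimum is used.
\item Pigeonholing over the independent spaces $W_j\subseteq\ker(\sigma(p)-I)$ gives $\min_j\dim W_j\leq b/(n-1)$. That is an upper bound, the wrong direction.
\end{itemize}
What the absence of invariant vectors (or covectors) actually gives is $\bigcap_i\ker(\sigma(x_i)-I)=0$. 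Hence $m\leq\sum_i\rank(\sigma(x_i)-I)\leq n(m-b)$, that is, $m\geq b+\lceil b/(n-1)\rceil$. This has a single copy of the $b/(n-1)$ term and no additive constant. The doubling of that term and the $+2$ are the substance of the theorem, and your proposal does not provide them.

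\textbf{Step 3} is fine, since unipotence of each $\sigma(p)$ passes to subquotients.
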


\Needspace{11\baselineskip}
\begin{corollary}[Dimension, Jordan-block and rank criteria]\label{cor:primitive-quantitative}
Let $\rho:F_n\to\GL_d(\C)$ be primitive-unipotent. Its whole image is unipotent if any one of the following conditions holds:
\begin{enumerate}
\item $d\leq n+3$;
\item some primitive image has at most $n-1$ Jordan blocks, with no restriction on their sizes;
\item $\rank(\rho(p)-I)\leq3$ for every $p\in P_n$, with no restriction on $d$.
\end{enumerate}
Moreover, for $d\leq n+3$ one has $\PP(n,d)=U(n,d)$, and this variety is irreducible. Thus Conjectures~7.1 and~7.3 of \cite{Gelander} hold in this range.
\end{corollary}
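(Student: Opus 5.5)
The statement to prove is Corollary~\ref{cor:primitive-quantitative}. The plan is to reduce all three conditions to Theorem~\ref{thm:primitive-constituents}: $\rho(F_n)$ is unipotent exactly when every irreducible constituent of $\rho$ is trivial.

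\emph{Reduction to constituents.} Take a composition series of $\C^d$ under $\rho(F_n)$. Let $\sigma_1,\ldots,\sigma_s$ be the irreducible subquotients. Each $\sigma_i(p)$ is a block of a block upper triangular form of $\rho(p)$. Hence it is unipotent, and every $\sigma_i$ is primitive-unipotent. If all $\sigma_i$ are trivial (one-dimensional and unipotent, hence trivial), then $\rho$ is unipotent in an adapted basis. So it suffices to show that each hypothesis excludes a nontrivial irreducible primitive-unipotent constituent $\sigma$ of dimension $m$.

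\emph{Condition (1).} A nontrivial constituent has $m\geq n+4>d$ by Theorem~\ref{thm:primitive-constituents}, which is impossible.

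\emph{Condition (2).} Suppose $\rho(p_0)$ has at most $n-1$ Jordan blocks. Then $\dim\ker(\rho(p_0)-I)\leq n-1$. The fixed space of a unipotent matrix restricted to an invariant subquotient gives $\dim\ker(\sigma(p_0)-I)\leq\dim\ker(\rho(p_0)-I)$. This holds because the number of Jordan blocks of a unipotent operator is at least the number of blocks of its restriction to an invariant subspace, and likewise for its quotient. So $b(\sigma)\leq n-1<n$, contradicting $b\geq n$.

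\emph{Condition (3).} Here the bound $\rank(\rho(p)-I)\leq3$ passes to constituents: $\rank(\sigma(p)-I)\leq3$. Then $m-b\leq3$, where $b$ is attained at some primitive $p$. But Theorem~\ref{thm:primitive-constituents} gives
\[
m-b\geq2\left\lfloor\tfrac{b}{n-1}\right\rfloor+2,
\]
and $b\geq n>n-1$ gives $\lfloor b/(n-1)\rfloor\geq1$. Hence $m-b\geq4$, a contradiction. This needs no bound on $d$.

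\emph{Equality $\PP(n,d)=U(n,d)$ and irreducibility.} Condition (1) gives the equality of sets for $d\leq n+3$, and hence of the reduced varieties. For irreducibility, $U(n,d)$ is the union over $g\in\GL_d$ of $gU_d^ng^{-1}$, where $U_d$ is the upper unitriangular group. It is therefore the image of the irreducible variety $\GL_d\times U_d^n$ under $(g,u_1,\ldots,u_n)\mapsto(gu_ig^{-1})_i$. The image of an irreducible variety has irreducible closure. The image is closed because $\GL_d\times^{B}U_d^n\to\mathrm{Hom}(F_n,\GL_d)$ factors through the proper map from the flag variety bundle, with $B$ the Borel subgroup. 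Hence $U(n,d)$ is irreducible.

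\emph{Main obstacle.} I expect the delicate step to be this last properness argument. The map is on $\GL_d\times^BU_d^n$, and one uses that the condition ``preserves a given full flag'' is closed on the product of the flag variety with $\mathrm{Hom}$. The constituent monotonicity in (2) and (3) is standard.
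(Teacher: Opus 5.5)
Your proposal is correct and follows essentially the route the paper indicates. The paper records this corollary without proof and defers to the companion paper, but the closing sentence of Theorem~\ref{thm:primitive-constituents} points to exactly your reduction to irreducible constituents: (1), (2) and (3) follow from $m\geq n+4$, from $b\geq n$, and from $m-b\geq 2\lfloor b/(n-1)\rfloor+2\geq 4$ respectively, since $\dim\ker(u-I)$ and $\rank(u-I)$ can only drop on subquotients. Your proper-projection argument from the incidence variety over the flag variety is the standard way to obtain closedness and irreducibility of $U(n,d)$.
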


For $F_3$, the dimension criterion gives $d\leq6$, while one primitive image with at most two Jordan blocks suffices in every dimension. If all primitive images are conjugate, the rank criterion can be checked on a single primitive image. These statements concern representations of $F_n$ itself and do not require an extension to $\Aut(F_n)$.

\begin{revision}
The companion paper proves the following further statements in arbitrary dimension; see \cite[Theorems~1.1, 3.6 and~5.2 and Corollary~6.2]{PrimitiveSpectra}.

\Needspace{9\baselineskip}
\begin{theorem}[Reduction to one primitive conjugacy class]\label{thm:primitive-conjugacy-reduction}
Fix $n\geq2$ and $d\geq1$. If $\rho\in\PP(n,d)$ and $\rho(w)$ is non-unipotent for some $w\in F_n$, there is $\sigma\in\PP(n,d)$ for which $\sigma(w)$ is non-unipotent and all primitive images are conjugate. Thus the primitive-unipotent conjecture in each rank and dimension is equivalent to its restriction to representations with one primitive conjugacy class.
\end{theorem}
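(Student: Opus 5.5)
The plan is to obtain $\sigma$ by degenerating $\rho$ inside a closed, suitably invariant subset of $\PP(n,d)$. Along the way the characteristic polynomial of the image of $w$ is frozen, so non-unipotence cannot be lost in the limit. Then, among the finitely many unipotent conjugacy classes of $\GL_d(\C)$ (Jordan types, i.e.\ partitions of $d$), the primitive images are pushed down to a single minimal class. Throughout I use three facts. First, $\PP(n,d)$ is Zariski closed and stable under $\GL_d(\C)$-conjugation and under precomposition with $\Aut(F_n)$, since $\Aut(F_n)$ permutes $P_n$. Second, the closure of a unipotent class is a union of classes that are smaller in the dominance order. Third, the ranks $\rank(A-I)^k$ are lower semicontinuous.

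\emph{Step 1 (freezing $w$).} Let $\Gamma_w\le\Aut(F_n)$ be the stabilizer of $w$. Put
\[
Y=\Zcl{\{\,g\,(\rho\circ\phi)\,g^{-1}: g\in\GL_d(\C),\ \phi\in\Gamma_w\,\}}\subseteq \PP(n,d).
\]
Every element $\tau$ of the generating set satisfies $\chi_{\tau(w)}=\chi_{\rho(w)}$. Since the coefficients of $\chi_{\tau(w)}$ are regular functions of $\tau$, this identity persists on all of $Y$. As $\chi_{\rho(w)}\neq(z-1)^d$, every $\tau\in Y$ has $\tau(w)$ non-unipotent. It therefore suffices to find some $\tau\in Y$ with all primitive images conjugate.

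\emph{Step 2 (minimizing Jordan types).} For $\tau\in Y$, let $J(\tau)$ be the set of Jordan types of $\tau(p)$, $p\in P_n$; this set is finite. I would choose $\tau\in Y$ that is minimal for a well-founded order on such finite sets, for instance lexicographic comparison of the sorted tuple of total ranks $\sum_k\rank(\tau(p)-I)^k$. Suppose $\tau(p)$ and $\tau(p')$ have different types. I then want a further degeneration $\tau'\in Y$ in which the larger type is replaced by a strictly smaller one and no new larger type appears. The natural tool is a one-parameter family inside the orbit: apply powers $\phi^N$ of a Nielsen transvection in $\Gamma_w$ (for example $p\mapsto p\,q^N$ with $q$ commuting with, or chosen relative to, $w$), conjugate by a diagonalizing family $g_t$, and take limits. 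By semicontinuity, the Jordan types in the limit can only drop.

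\emph{Main obstacle.} Step 2 is where the difficulty lies, because $\Gamma_w$ need not act transitively on $P_n$: freezing $w$ costs the transitivity of $\Aut(F_n)$ on primitives. My first approach is to reduce to the case where $w$ lies in a proper free factor $A=\langle x_1,\dots,x_{n-1}\rangle$ after replacing $\rho$ by a precomposition. Automorphisms fixing $A$ pointwise, namely $x_n\mapsto a\,x_n^{\pm1}b$ with $a,b\in A$, then lie in $\Gamma_w$. These already connect every primitive element to one of the form $x_n$ times an element of $A$. That leaves finitely many comparisons, between $\tau(x_n)$ and the primitive images inside $A$.

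If $w$ cannot be placed in a proper factor, I would instead replace the condition ``$\tau(w)$ non-unipotent'' by a closed $\Aut(F_n)$-invariant condition. One candidate is the existence of some word of bounded length with non-unipotent image, bounded via the Zariski closure $K$, since unipotence of $K^\circ$ is detected by finitely many words. I would then run the minimization over the full orbit closure. Finally, I expect to need a last use of Nielsen moves to pass from ``some word'' back to the specific $w$.
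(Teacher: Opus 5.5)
There is a genuine gap. Your Step~2 carries the whole content of the theorem, it is never carried out, and neither fallback can supply it.

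Precomposition by $\phi\in\Gamma_w$ permutes $P_n$, and conjugation preserves Jordan types, so $J(g(\tau\circ\phi)g^{-1})=J(\tau)$. The set of types can therefore change only at boundary points of the orbit closure. Nothing in your plan controls which primitive images degenerate there. The degeneration that lowers one type without creating another is asserted, not constructed, so your minimal $\tau$ yields no contradiction.

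The first fallback fails twice.
\begin{itemize}
\item Some $w$ lie in no proper free factor. For example, $w=[x_1,x_2]\in F_2$: proper free factors of $F_2$ are generated by primitive elements, whose nonzero powers have nonzero abelianization.
\item Even when $w\in A$, the automorphisms fixing $A$ pointwise fix every primitive element of $A$ and preserve the $x_n$-exponent sum up to sign. Since $x_1x_n^k$ is primitive for every $k$, infinitely many orbits remain, not finitely many comparisons.
\end{itemize}
The second fallback replaces a closed condition by an open one. Non-unipotence of some word of bounded length is Zariski open and is not $\Aut(F_n)$-invariant, so the limits in your minimization can destroy it. This is exactly what Step~1 was meant to prevent.

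The missing idea is to go the other way. Keep non-unipotence of $\tau(w)$ as an open condition, and take a very general point of an \emph{irreducible} closed subvariety of $\PP(n,d)$ that contains $\rho$ and is invariant under a group transitive on $P_n$. The paper only cites this theorem from its companion, so what follows is a repair, not a quotation.

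Irreducibility is the crux. A priori, a component of an $\Aut(F_n)$-orbit closure is preserved only by a finite-index subgroup, and such subgroups need not be transitive on $P_n$; the mod-$2$ congruence kernel is an example. Irreducibility comes from complex Nielsen moves:
\begin{itemize}
\item For $\tau\in\PP(n,d)$, the matrix $\tau(x_i)^t=\sum_{k<d}\binom{t}{k}(\tau(x_i)-I)^k$ is polynomial in $t$. So replacing $\tau(x_j)$ by $\tau(x_j)\tau(x_i)^t$ or by $\tau(x_i)^t\tau(x_j)$ defines morphisms $\mathbb A^1\times\PP(n,d)\to\GL_d^n$.
\item For each primitive $p$, the coefficients of the characteristic polynomial of the new image of $p$ are polynomial in $t$. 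They equal those of $(z-1)^d$ for $t\in\Z$, hence for all $t$, so these moves preserve $\PP(n,d)$.
\item Each finite chain of moves, with parameters ranging over $\mathbb A^m$, applied to $\rho$ has irreducible image containing $\rho$. Any two such images lie in the image of the concatenated chain.
\item Hence the closure $Y$ of their union is irreducible and lies in $\PP(n,d)$. It is invariant under $\mathrm{SAut}(F_n)$, which is generated by transvections and is transitive on $P_n$ for $n\geq2$.
\end{itemize}
For each primitive $p$, exactly one unipotent class $\mathcal O_p$ contains $\tau(p)$ for all $\tau$ in a dense open subset of $Y$. Invariance makes $\mathcal O_p$ independent of $p$. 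The set $\{\tau\in Y:\tau(w)\text{ is not unipotent}\}$ is open and contains $\rho$, hence is dense. A point of $Y$ avoiding these countably many proper closed subsets exists over $\C$, and it is the required $\sigma$.
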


\Needspace{9\baselineskip}
\begin{theorem}[Further unipotence criteria]\label{thm:additional-unipotence}
Let $\rho:F_n\to\GL_d(\C)$ be primitive-unipotent.
\begin{enumerate}
\item For every $n\geq2$, if $\rank((\rho(p)-I)^2)\leq1$ for every primitive $p$, then $\rho$ is unipotent. Equivalently, each primitive image may have blocks of sizes one and two and at most one block of size three.
\item For $n=2$, it suffices that $\rank(\rho(p)-I)\leq2$ for one primitive $p$.
\item For $n\geq3$, if every rank-two free-factor image is nilpotent of class at most two, then $\rho(F_n)$ is unipotent and nilpotent of class at most two.
\end{enumerate}
\end{theorem}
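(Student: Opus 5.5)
The plan is to proceed by induction on $d$ in all three parts, reducing to the irreducible case. The hypotheses pass to invariant subspaces and to quotients. Primitive-unipotence is inherited by any subquotient, since unipotence is. For a nilpotent $N$ preserving a flag $0\subset W\subset V$, the ranks of $N|_W$, of $N|_{V/W}$, and of their squares are bounded by those of $N$ and $N^2$, so the rank conditions in (1) and (2) descend too. Nilpotency class of factor images also descends. A representation whose irreducible constituents are all trivial is unipotent, by choosing a basis adapted to a composition series. It therefore suffices to show that an irreducible $\sigma:F_n\to\GL_m(\C)$ satisfying the hypotheses is trivial. Equivalently, we must produce a nonzero common fixed vector and then apply the induction hypothesis to the fixed space and to the quotient.

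\textbf{Part (3).} Here no irreducibility reduction is needed. Since $n\geq3$, every rank-two free factor is proper, so by Theorem~\ref{thm:primitive-factors}(2) its image is unipotent. By hypothesis that image is also nilpotent of class at most two. Fix a basis $a_1,\ldots,a_n$. We need every commutator $[\rho(a_i),\rho(a_j)]$ to be central in $\rho(F_n)$.
\begin{itemize}
\item Centrality against $a_i$ and $a_j$ themselves is given by the factor $\langle a_i,a_j\rangle$.
\item Against a third generator $a_k$, use the rank-two factors $\langle a_i a_k,a_j\rangle$ and $\langle a_i,a_j a_k\rangle$. In each, the commutator of the two generators is central. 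Expanding these commutators with the class-two identities valid inside each factor should express $[a_i,a_j]$ times correction terms that commute with $a_k$.
\end{itemize}
This bookkeeping is finite, and I expect it to close. It gives $\rho(F_n)$ nilpotent of class at most two and generated by unipotent elements. The Zariski closure of a nilpotent group has connected component $T\times U$, and the unipotent elements form a subgroup, so the image is unipotent.

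\textbf{Part (1).} For $n\geq3$, Theorem~\ref{thm:primitive-constituents} already says a nontrivial irreducible constituent has $b\geq n$. The condition $\rank(\sigma(p)-I)^2\leq1$ forces every Jordan block to have size at most $2$, apart from one block of size $3$. Hence $\rank(\sigma(p)-I)\leq(m+1)/2$, so $b\geq(m-1)/2$. I would combine this with the inequality $m\geq b+2\lfloor b/(n-1)\rfloor+2$ and the fixed-space inequality for pairs $p,q$ in a basis:
\[
\dim\bigl(\ker(\sigma(p)-I)\cap\ker(\sigma(q)-I)\bigr)\geq 2b-m.
\]
Irreducibility forces this intersection to be small: a vector fixed by all basis elements is fixed by the whole image. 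I would then use Nielsen moves $q\mapsto qp^k$ to sharpen this and reach a contradiction.

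For $n=2$ that theorem is unavailable. Instead I would use the closure argument from Corollary~\ref{cor:unipotent-coset}. With $N=\log\sigma(b)$, the element $\sigma(a)\exp(tN)$ is unipotent for all $t\in\C$. Expanding $\tr\bigl((\sigma(a)\exp(tN))^j\bigr)=m$ in $t$ then gives trace identities between $\sigma(a)-I$ and the nilpotent $N$. Since $N^3=0$ and $\rank N^2\leq1$, there are only a few such identities. I would aim to show that the span of $\operatorname{im}N$ together with $\operatorname{im}(\sigma(a)-I)$ is a proper invariant subspace, or else that $\ker N\cap\ker(\sigma(a)-I)\neq0$.

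\textbf{Part (2).} Here $n=2$ and $\rank(\sigma(p)-I)\leq2$ for one primitive $p$, so the fixed space of $\sigma(p)$ has codimension at most $2$. The same one-parameter trick applies to $\sigma(q)\exp(t\log\sigma(p))$ for a complementary basis element $q$, and also to the symmetric family with $p$ and $q$ exchanged. The identity $\chi=(z-1)^m$ along these families is a polynomial condition in $t$. Its low-order coefficients involve only $N=\log\sigma(p)$, of rank at most $2$, together with $\sigma(q)$.

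I expect this to be the main obstacle. The goal is to show that $\ker N$ meets $\ker(\sigma(q)-I)$, or else that $\operatorname{im}N+\sigma(q)\operatorname{im}N$ is invariant, in dimension $m\geq3$. The cases $m\leq2$ are direct: $\SL_2$-type pairs of unipotents with unipotent products generate a unipotent group. I would first treat $\rank N=1$ by an explicit computation, and then handle $\rank N=2$ by splitting according to whether $N^2=0$ or $N^2\neq0$.
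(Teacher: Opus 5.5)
The paper does not prove this theorem; it defers to the companion paper. Judged on its own, your proposal is sound in two places: the reduction to irreducible constituents, and the last step of (3), that a nilpotent group generated by unipotent matrices is unipotent. Each of the three parts still has a genuine gap.

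\textbf{Part (3).} The bookkeeping you sketch cannot close. It uses only the relations that $[u,v]$ commutes with $u$ and $v$ for each rank-two free factor $\langle u,v\rangle$, and these do not force class two. The free Burnside group $B(3,3)$ of exponent $3$ is a counterexample. Every two-generated subgroup is a quotient of $B(2,3)$, which has order $27$ and class two. Hence every rank-two free-factor image of $F_3\to B(3,3)$ has class at most two, yet $[[x,y],z]\neq1$. Modulo $\gamma_4$, the factors $\langle xz,y\rangle$ and $\langle x,yz\rangle$ give $[[x,y],z]\equiv[[y,z],x]\equiv[[z,x],y]$. Hall--Witt then shows only that this element has order dividing $3$. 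So unipotence of the factor images must enter the computation itself, not only the final step.

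One way to do this is to put $X_i=\log\rho(x_i)$. For $w=x_1x_3^k$ or $w=x_3^kx_1$, the factor $\langle w,x_2\rangle$ has unipotent class-two image. Since $\rho(\langle x_1,x_3\rangle)$ is unipotent of class two, $A=\log\rho(w)=X_1+kX_3\pm\frac k2[X_1,X_3]$, and $[[A,X_2],A]=0$. This identity is polynomial in $k$. Adding the coefficients of $k$ for the two signs gives $[[X_1,X_2],X_3]=[[X_2,X_3],X_1]$. Permuting indices gives equality with $[[X_3,X_1],X_2]$. The Jacobi identity needs no nilpotence of the Lie algebra generated by the $X_i$, and it now gives $3[[X_1,X_2],X_3]=0$. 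This bracket therefore vanishes over $\C$. Every $[X_i,X_j]$ is then central, the generated Lie algebra has class two, and by Lie's theorem it consists of nilpotent matrices. Exponentiating gives both conclusions.

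\textbf{Parts (1) and (2).} Here the decisive step is missing. For (1) with $n\geq3$, the inequalities you list give no contradiction. Since $b\geq(m-1)/2$, your fixed-space bound $2b-m\geq-1$ is vacuous. Moreover, for $n=3$ every pair $(m,b)=(2b+1,b)$ with $b\geq3$ odd satisfies all of the following:
\begin{itemize}
\item $b\geq n$;
\item $m\geq b+2\lfloor b/2\rfloor+2$;
\item the sharper bound $m\geq2b+1$;
\item your bound $m\leq2b+1$.
\end{itemize}
In this case a minimizing primitive image has Jordan type $(3,2,\ldots,2)$. So everything rests on the unspecified ``Nielsen moves'', which are the real content of the statement.

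For $n=2$ in (1), and for (2), you name a reasonable tool: trace identities along $\sigma(a)\exp(tN)$. You do not say which identities produce the common fixed vector or the invariant subspace, and in (2) even the rank-one case of $N$ is left as a plan. As written, neither (1) nor (2) is proved.
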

\end{revision}

\subsection{Characteristic representations and primitive inner images}\label{sec:characteristic-applications}

A representation $\rho:F_n\to\GL_d(\C)$ is \emph{characteristic} if for every $\alpha\in\Aut(F_n)$ there exists $S_\alpha\in\GL_d(\C)$ such that
\[
\rho(\alpha(w))=S_\alpha\rho(w)S_\alpha^{-1}
\qquad(w\in F_n).
\]
More generally, $\rho$ has a \emph{finite automorphism orbit up to simultaneous conjugacy} if the representations $\rho\circ\alpha$, $\alpha\in\Aut(F_n)$, belong to finitely many simultaneous conjugacy classes. This concerns conjugacy of the whole representation, with no semisimplicity assumption. A matrix is \emph{quasi-unipotent} if some positive power is unipotent, equivalently if all its eigenvalues are roots of unity.

Write $\iota_w(v)=wvw^{-1}$, with automorphisms composed from right to left. For every primitive $p\in F_n$ and $n\geq3$, the companion paper proves the centralizer identity
\begin{equation}\label{eq:primitive-centralizer}
\iota_p\in
[C_{\Aut(F_n)}(\iota_p),C_{\Aut(F_n)}(\iota_p)].
\end{equation}
Here the brackets denote the abstract commutator subgroup, without taking a closure. This identity gives the following restrictions on the full Jordan form of a primitive inner image; see \rev{\cite[Theorem~1.3 and Section~8.2]{PrimitiveSpectra}}.

\Needspace{13\baselineskip}
\begin{theorem}[Jordan restrictions for primitive inner images]\label{thm:primitive-inner-jordan}
Let $n\geq3$, let $R:\Aut(F_n)\to\GL_d(\C)$, and let $p\in F_n$ be primitive. Put $A=R(\iota_p)$ and let $m_{\lambda,s}$ denote the number of size-$s$ Jordan blocks of $A$ with eigenvalue $\lambda$. Then
\[
A\in[C_{\GL_d(\C)}(A),C_{\GL_d(\C)}(A)],
\]
and the following hold:
\begin{enumerate}
\item $\lambda^{m_{\lambda,s}}=1$ whenever $m_{\lambda,s}>0$;
\item for each eigenvalue $\lambda$, the occurring block sizes are exactly $1,2,\ldots,r_\lambda$, where $r_\lambda$ is the largest such size.
\end{enumerate}
In particular, $A$ is quasi-unipotent. If $o_\lambda$ is the order of $\lambda$ and $E_\lambda$ is its generalized eigenspace, then
\begin{equation}\label{eq:primitive-jordan-bound}
\dim E_\lambda\geq o_\lambda\frac{r_\lambda(r_\lambda+1)}2.
\end{equation}
If $A$ is unipotent with largest block size $r$, then $d\geq r(r+1)/2$.
\end{theorem}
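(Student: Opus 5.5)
The plan is to take the centralizer identity \eqref{eq:primitive-centralizer} as given and push it forward through $R$. Since $R$ maps $C_{\Aut(F_n)}(\iota_p)$ into $C_{\GL_d(\C)}(A)$, it maps the commutator subgroup of the first into that of the second. This gives $A\in[C,C]$ with $C=C_{\GL_d(\C)}(A)$, which is the displayed membership. Everything else should then be linear algebra on the unit group $C=\Lambda^\times$ of the algebra $\Lambda=\{X:XA=AX\}$. The idea is to construct group homomorphisms from $C$ to abelian groups that kill $[C,C]$ but would not kill $A$ if the Jordan data were forbidden.

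\textbf{Eigenvalue condition.} The algebra $\Lambda$ preserves each generalized eigenspace $E_\lambda$, so $C=\prod_\lambda C_\lambda$. Write $A|_{E_\lambda}=\lambda(I+N)$ with $N$ nilpotent. For each block size $s$, let $T_s$ be the space of size-$s$ chains, namely
\[
T_s=\ker N^s\big/\bigl(\ker N^{s-1}+N\ker N^{s+1}\bigr),
\]
which has dimension $m_{\lambda,s}$. Then $C_\lambda$ acts on $T_s$, and this gives the standard surjection $C_\lambda\to\prod_s\GL(T_s)$ whose kernel is unipotent. On $T_s$ the element $A$ acts by the scalar $\lambda$. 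Composing with the determinant on each factor gives a homomorphism to an abelian group, which kills $[C,C]$. Hence $\lambda^{m_{\lambda,s}}=\det(\lambda I_{T_s})=1$ whenever $m_{\lambda,s}>0$, proving item~1. Quasi-unipotence of $A$ follows immediately.

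\textbf{No gaps in block sizes (main obstacle).} I expect item~2 to be the hard step. Suppose $s<r_\lambda$ is a missing size, and write $E_\lambda=M_{<s}\oplus M_{>s}$. By the previous step, $A$ lies in the kernel $C_\lambda^1$ of $C_\lambda\to\prod\GL(T_s)$, together with a scalar factor. I would try to build an additive homomorphism $\phi:\Lambda\to\C$ with three properties:
\begin{itemize}
\item it vanishes on $[\Lambda,\Lambda]$ and on $J^2$, where $J$ is the radical;
\item it satisfies $\phi(N)\neq0$;
\item it produces a group homomorphism $1+X\mapsto\phi(X)$ on $1+J$ that extends to a homomorphism of $C_\lambda$ trivial on the Levi part $\prod\GL(T_s)$.
\end{itemize}
Concretely, I would take $\phi$ to be the trace of the component of $X$ of the form ``$N$ on blocks of size $s+1$'', read off on the graded piece
\[
N^{s}\ker N^{s+1}\big/N^{s+1}\ker N^{s+2},
\]
suitably normalized. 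The key point is that when size $s$ is absent, $N$ on a size-$(s+1)$ block cannot factor through adjacent blocks. Such factorizations are exactly the source of the elements of $J/J^2$ that are commutators. If $s+1$ is also absent, I would use the smallest occurring size above $s$ instead.

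Verifying that $\phi$ kills commutators of $C$, and not merely of $1+J$, is the delicate computation. I would check it by writing $\Lambda$ as a quiver algebra on the block sizes, with arrows only between adjacent sizes modulo $J^2$. With $\phi$ in hand, $A\in[C,C]$ forces $\phi(\lambda^{-1}A-I)=\phi(N)=0$, a contradiction. This shows that the occurring sizes are exactly $1,\dots,r_\lambda$.

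\textbf{Dimension bound.} By item~2 every $m_{\lambda,s}$ with $1\leq s\leq r_\lambda$ is positive. By item~1 each such $m_{\lambda,s}$ is then divisible by the order $o_\lambda$, so $m_{\lambda,s}\geq o_\lambda$. Therefore
\[
\dim E_\lambda=\sum_{s=1}^{r_\lambda}s\,m_{\lambda,s}\geq o_\lambda\frac{r_\lambda(r_\lambda+1)}{2},
\]
which is \eqref{eq:primitive-jordan-bound}. The unipotent case is the special case $\lambda=1$, $o_\lambda=1$, $E_1=\C^d$, which gives $d\geq r(r+1)/2$.
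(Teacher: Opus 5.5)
Your derivation of $A\in[C,C]$ from the centralizer identity is correct, and so are item~1 (via determinants on the spaces $T_s$) and the final dimension count. This is the route the paper indicates: it deduces the statement from \eqref{eq:primitive-centralizer} and defers the details to the companion paper.

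The gap is in item~2, and it is structural, not a matter of checking: a functional with the three properties you list need not exist. Write $M_s=\C[t]/(t^s)$. Take one Jordan block each of sizes $2$ and $3$, so that size $1$ is missing. The maps $f\colon M_2\to M_3$, $1\mapsto t$, and $g\colon M_3\to M_2$, $1\mapsto 1$, lie in $J$. Both $gf$ and $fg$ are multiplication by $t$, on $M_2$ and $M_3$ respectively. Hence $N=gf+fg\in J^2$, so no $\phi$ vanishing on $J^2$ has $\phi(N)\neq0$, even though $I+N\notin[C,C]$.

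Your single-size functional also fails to kill commutators whenever size $s+2$ occurs. The analogous maps $f,g$ between $M_{s+1}$ and $M_{s+2}$ give $fg-gf\in[\Lambda,\Lambda]$, whose $t$-coefficient on the size-$(s+1)$ block is $-1$. Your claim that $N$ on a size-$(s+1)$ block cannot factor through adjacent blocks overlooks the neighbour $s+2$. The quotient that matters is $J/(J\cap[\Lambda,\Lambda])$, not $J/J^2$.

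Here is a repair.
\begin{itemize}
\item \emph{Choice of run.} Let $S$ be the set of sizes occurring for $\lambda$. Let $S_0$ be the maximal run of consecutive integers in $S$ containing $r_\lambda$. A gap means exactly that $1\notin S_0$.
\item \emph{The functional.} Let $\phi(X)$ be the sum, over $s\in S_0$, of the coefficient of $t$ in the trace of the diagonal block of $X$ in $M_{m_{\lambda,s}}(\C[t]/(t^s))$.
\item \emph{$\phi$ kills commutators.} Take summands of sizes $i<j$ and maps $X\colon M_j\to M_i$, $Y\colon M_i\to M_j$. Both $XY$ and $YX$ are multiplication by the same polynomial $t^{j-i}xy$. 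If $j\geq i+2$, both $t$-coefficients vanish. If $j=i+1$, then $i,j$ lie in the same run, so either both lie in $S_0$ (where $i\geq2$ and the coefficients agree) or neither does. For equal sizes the two products commute in $\C[t]/(t^s)$. Hence $\phi$ kills $[\Lambda,\Lambda]$.
\item \emph{A homomorphism on $1+J$.} Since $\phi$ does not kill $J^2$, the map $1+X\mapsto\phi(X)$ is not a homomorphism. Use $u\mapsto\phi(\log u)$ on $1+J$ instead. It is additive by Baker--Campbell--Hausdorff, because the higher terms are commutators. It is conjugation invariant because $\phi$ is a trace.
\item \emph{Extension to $C$.} Extend this map trivially over the Levi factor $\prod_s\GL_{m_{\lambda,s}}$ of block-scalar matrices in a Jordan basis. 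Compose with the projection $C\to C_\lambda$.
\item \emph{Conclusion.} The result is a homomorphism $C\to\C$ sending $A=\lambda I\cdot(I+N)$ to $\phi(\log(I+N))=\sum_{s\in S_0}m_{\lambda,s}>0$. This contradicts $A\in[C,C]$.
\end{itemize}
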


These restrictions use an actual representation of $\Aut(F_n)$. In particular, the unipotent types $(5,1,1)$, $(4,2,1)$ and $(3,3,1)$ cannot occur as primitive inner images. This does not exclude them for arbitrary primitive-unipotent representations of $F_3$, and does not settle the general dimension-seven problem.

The same centralizer argument, applied through the canonical action on the image algebra, yields the following conclusion for characteristic representations and finite automorphism orbits; see \rev{\cite[Proposition~8.7]{PrimitiveSpectra}}.

\Needspace{10\baselineskip}
\begin{theorem}[Primitive quasi-unipotence in every rank at least three]\label{thm:finite-orbit-quasi-unipotence}
Let $n\geq3$ and let $\rho:F_n\to\GL_d(\C)$ have finite $\Aut(F_n)$-orbit up to simultaneous conjugacy. There is an integer $q\geq1$, depending on $\rho$, such that
\[
\rho(p)^q\quad\text{is unipotent for every }p\in P_n.
\]
In particular, the common primitive conjugacy class of every characteristic representation is quasi-unipotent. These assertions hold in every dimension.
\end{theorem}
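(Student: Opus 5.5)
The plan is to turn the finite-orbit hypothesis into an actual linear representation of a finite-index subgroup of $\Aut(F_n)$, induce it up to $\Aut(F_n)$, and apply Theorem~\ref{thm:primitive-inner-jordan}. That theorem controls the \emph{ratios} of eigenvalues of $\rho(p)$. An abelianization argument then controls the remaining scalar factor.

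\emph{Step 1: the canonical action on the image algebra.} Let $B\subseteq M_d(\C)$ be the associative span of $\rho(F_n)$. Let $\Gamma\leq\Aut(F_n)$ be the stabilizer of the simultaneous conjugacy class of $\rho$; by hypothesis it has finite index. Every inner automorphism lies in $\Gamma$, since $\rho\circ\iota_w=\rho(w)\rho\rho(w)^{-1}$. For $\alpha\in\Gamma$ choose $S_\alpha$ with $\rho\circ\alpha=S_\alpha\rho S_\alpha^{-1}$. Conjugation by $S_\alpha$ then preserves $B$, and its restriction $\varphi_\alpha$ is determined by $\varphi_\alpha(\rho(w))=\rho(\alpha(w))$. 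Hence $\varphi_\alpha$ does not depend on the choice of $S_\alpha$, and $\alpha\mapsto\varphi_\alpha$ is an honest homomorphism $R_0:\Gamma\to\GL(B)$. For primitive $p$ we get $R_0(\iota_p)=\operatorname{Ad}\rho(p)|_B$.

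\emph{Step 2: passing to $\Aut(F_n)$.} Let $R=\operatorname{Ind}_\Gamma^{\Aut(F_n)}R_0$, of dimension $N=[\Aut(F_n):\Gamma]\dim B$. Its restriction to $\Gamma$ contains $R_0$ as a summand, so every eigenvalue of $\operatorname{Ad}\rho(p)|_B$ is an eigenvalue of $R(\iota_p)$. By Theorem~\ref{thm:primitive-inner-jordan}(1), each eigenvalue $\lambda$ of $R(\iota_p)$ satisfies $\lambda^{m}=1$ for some $m\leq N$. Thus every eigenvalue of $\operatorname{Ad}\rho(p)|_B$ is an $N!$-th root of unity, uniformly in $p\in P_n$.

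\emph{Step 3: from ratios to eigenvalues.} Take a composition series of $\C^d$ as a $B$-module, with simple constituents $V_1,\dots,V_s$ of dimensions $d_i$ and corresponding representations $\rho_i$. By Burnside's theorem, $B$ surjects onto $\operatorname{End}(V_i)$. Therefore every ratio $\mu/\mu'$ of eigenvalues of $\rho_i(p)$ is an eigenvalue of $\operatorname{Ad}\rho_i(p)$ on a quotient of $B$, hence an $N!$-th root of unity. It follows that all eigenvalues of $\rho_i(p)$ have the form $c_i(p)\zeta$ with $\zeta^{N!}=1$.

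To pin down $c_i(p)$, use $\det\rho_i:F_n\to\C^*$. The finite-orbit hypothesis implies that the $\Aut(F_n)$-orbit of the character tuple $(\det\rho_1,\dots,\det\rho_s)$ is finite up to permutation, since semisimplifications of conjugate representations agree. Apply a transvection $x_1\mapsto x_1x_2^k$, which sends $\chi(x_1)\mapsto\chi(x_1)\chi(x_2)^k$. A finite orbit then forces $\chi(x_2)$ to be a root of unity of order bounded by the orbit size $M$. Since primitive elements form one $\Aut(F_n)$-orbit, this bound applies to every basis element, so $\det\rho_i(p)^{M!}=1$. Combining, $c_i(p)^{d_i N! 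M!}=1$. Taking $q=d\cdot N!\cdot M!$, every eigenvalue of $\rho(p)^q$ equals $1$, as required.

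\emph{Expected obstacle.} The delicate point is Step~2. One must check that the Jordan restriction theorem applies to the induced representation and that the bound on eigenvalue orders is uniform in $p$. I would handle the dependence on $p$ by noting that every bound used depends only on $N$. A secondary point to verify is that Step~1 really gives a homomorphism rather than a projective representation; this works precisely because we act on $B$ rather than on $\C^d$.
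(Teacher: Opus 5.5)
Your proof is correct. It follows the strategy the paper indicates, the canonical action on the image algebra $B$, but organizes it differently. The paper gives no proof here. It only says that the centralizer identity \eqref{eq:primitive-centralizer} is run through the canonical action on the image algebra, with details in \cite{PrimitiveSpectra}.

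You instead use Theorem~\ref{thm:primitive-inner-jordan}(1) as a black box. Inducing $R_0$ from the finite-index stabilizer $\Gamma$ to $\Aut(F_n)$ lets you apply a statement about representations of $\Aut(F_n)$, even though $R_0$ is defined only on $\Gamma$. The elements realizing \eqref{eq:primitive-centralizer} need not lie in $\Gamma$, so this step matters unless $\rho$ is characteristic. The price is that you only learn about $\operatorname{Ad}\rho(p)$, that is, about ratios of eigenvalues. You therefore need the separate Burnside-plus-determinant step to recover the scalar $c_i(p)$. That step is genuinely necessary, since a one-dimensional $\rho$ is invisible to $\operatorname{Ad}$, and your transvection argument supplies it. Your route also makes the dependence of $q$ on $\rho$ explicit, through $[\Aut(F_n):\Gamma]$, $\dim B$ and the orbit of the determinant characters. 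This is consistent with the paper's remark that no representation-independent exponent is asserted.

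Two bookkeeping points need fixing.

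First, finiteness of the orbit of the \emph{multiset} $\{\det\rho_i\}$ does not immediately bound the order of $\chi(x_2)$ by $M$. Pigeonhole only gives a permutation $\pi$ with $\chi_i\circ\tau_k=\chi_{\pi(i)}\circ\tau_{k'}$, which you would then have to iterate. It is cleaner to use the finite $\Aut(F_n)$-invariant set $X=\{\det\rho_i\circ\alpha\}$, which has at most $sM$ elements. For $\psi\in X$, all $\psi\circ\tau_k$ lie in $X$, so $\psi(x_2)^{|X|!}=1$. Then $\det\rho_i(\alpha(x_2))=(\det\rho_i\circ\alpha)(x_2)$ gives the bound for every primitive element.

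Second, $q$ must be divisible by every $d_i\,N!\,|X|!$, which $d\cdot N!\cdot M!$ need not be. Take $q=d!\,N!\,|X|!$ instead.
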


The exponent is uniform over the primitive elements of a given representation. No exponent independent of the representation is asserted. In particular, the theorem applies to $\rho(w)=R(\iota_w)$ for every complex linear representation $R$ of $\Aut(F_n)$. Its rank threshold does not use property~(T).

\rev{For virtual unipotence of whole proper-factor images, the argument in \cite[Section~8.4]{PrimitiveSpectra} uses property~(T) for $\Aut(F_k)$ at every $k\geq4$, including Nitsche\textquotesingle s rank-four theorem. It yields the following stronger statement about the connected solvable radical.}

\Needspace{6\baselineskip}
\begin{proposition}[Virtually solvable characteristic images]\label{prop:characteristic-solvable}
Let \rev{$k\geq4$} and let $\sigma:F_k\to\GL_d(\C)$ be characteristic.
\rev{For $L=\Zcl{\sigma(F_k)}$, the connected solvable and unipotent radicals satisfy $R(L^\circ)=R_u(L^\circ)$.}
If $\sigma(F_k)$ is virtually solvable, then it is virtually unipotent.
\end{proposition}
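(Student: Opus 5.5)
The plan is to reduce both assertions to the vanishing of a torus quotient of $L^\circ$, and to kill that torus using the primitive quasi-unipotence of Theorem~\ref{thm:finite-orbit-quasi-unipotence} together with property~(T).

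\emph{Step 1: the automorphism action.} Since $\sigma\circ\alpha=S_\alpha\sigma S_\alpha^{-1}$, conjugation by $S_\alpha$ preserves $\sigma(F_k)$. It therefore normalizes $L$ and $L^\circ$, and preserves $R(L^\circ)$ and $R_u(L^\circ)$. Let $\Gamma=\sigma^{-1}(L^\circ)$. This is a finite-index subgroup of $F_k$; replacing it by a smaller one, I take it to be characteristic. Put
\[
S=L^\circ/[L^\circ,L^\circ]R_u(L^\circ).
\]
This is a torus, and $R(L^\circ)/R_u(L^\circ)\to S$ is an isogeny. Hence the first assertion, $R(L^\circ)=R_u(L^\circ)$, is equivalent to $S=1$. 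The composite $\psi:\Gamma\to L^\circ\to S$ has Zariski-dense image, because $\sigma(\Gamma)$ is dense in $L^\circ$.

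Conjugation by the $S_\alpha$ induces an action of $\Aut(F_k)$ on $S$, and $\psi$ is equivariant for it. Elements of $\sigma(\Gamma)\subseteq L^\circ$ act trivially on the abelian quotient $S$. The elements of $\sigma(F_k)$ act through the finite group $L/L^\circ$. So after passing to a finite-index subgroup $\Lambda\leq\Aut(F_k)$, the action factors through $\operatorname{Out}$-type data. It is recorded by a homomorphism $\Lambda\to\GL(X^*(S))\simeq\GL_r(\Z)$, and $\Lambda$ still has property~(T) for $k\geq4$.

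\emph{Step 2: characters on primitive powers.} By Theorem~\ref{thm:finite-orbit-quasi-unipotence}, there is $q\geq1$ with $\sigma(p)^q$ unipotent for every $p\in P_k$. Enlarging $q$, I may assume $p^q\in\Gamma$. Then every character $\chi\in X^*(S)$ satisfies $\chi(\psi(p^q))=1$, since unipotent elements die in a torus. Let $\Delta\leq\Gamma$ be the subgroup generated by all $p^q$, $p\in P_k$. This subgroup is $\Aut(F_k)$-invariant, and $\psi(\Delta)=1$. So $\psi$ factors through $\Gamma/\Delta$, and in fact through its abelianization tensored with $\mathbb Q$. If the image of $\Delta$ in $H_1(\Gamma;\mathbb Q)$ had finite codimension, then $\psi(\Gamma)$ would be a finitely generated torsion abelian group, hence finite. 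Since $\psi(\Gamma)$ is dense in the connected group $S$, this would force $S=1$.

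\emph{Step 3: the main obstacle.} The difficulty is that primitive (powers of) elements need not span $H_1(\Gamma;\mathbb Q)$ for a finite-index $\Gamma$. So I must use that $\psi$ is equivariant and lands in a torus with an integral action. I would consider
\[
V=\Hom(\Gamma/\Delta,\mathbb Q)\supseteq\psi^*X^*(S)\otimes\mathbb Q .
\]
The subspace $\psi^*X^*(S)\otimes\mathbb Q$ is a finite-dimensional $\Lambda$-module defined over $\Z$, on which $\Gamma$ acts trivially. On such a module, the Grunewald--Lubotzky theory identifies the $\Lambda$-action with an arithmetic-group action. The redundant-module argument from \cite[Section~8.4]{PrimitiveSpectra} then uses property~(T) of $\Aut(F_j)$ for all $j\geq4$: the vanishing of $H^1$ with unitary coefficients, applied to rank-$j$ free factors via Nitsche's theorem at $j=4$. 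Its role is to show that any such module on which all primitive powers act trivially is zero. I expect this step to carry the real content.

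\emph{Step 4: the final assertion.} With $S=1$, we have $R(L^\circ)=R_u(L^\circ)$. If $\sigma(F_k)$ is virtually solvable, then $L^\circ$ is solvable, so $L^\circ=R(L^\circ)$ is unipotent. Hence $\sigma(\Gamma)\leq L^\circ$ is a finite-index subgroup of $\sigma(F_k)$ that is conjugate into the upper unitriangular group, which is virtual unipotence.
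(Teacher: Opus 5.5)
\textbf{There is a genuine gap: your Step~3 is a placeholder, not an argument.} All of the proposition's content is deferred to an unspecified ``redundant-module argument'' whose role you guess. The setup there also does not type-check: $\chi\circ\psi$ lies in $\Hom(\Gamma,\C^*)$, not in $\Hom(\Gamma/\Delta,\mathbb Q)$. Step~2 has two smaller slips. First, $\psi$ factors through $(\Gamma/\Delta)^{ab}$ but not through its rationalization, since $S(\C)$ has torsion. Second, you need the image of $\Delta$ to \emph{span} $H_1(\Gamma;\mathbb Q)$; finite codimension is automatic in a finite-dimensional space. More importantly, primitive powers are the wrong lever. As you note, they need not span $H_1(\Gamma;\mathbb Q)$, and you give no mechanism by which property~(T) would repair this.

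What you missed in Step~1 is that $\Aut(F_k)\to\GL(X^*(S))$ has \emph{finite} image. The action is induced by conjugation by the algebraic group $N=N_{\GL_d(\C)}(L)$, whose identity component acts trivially on the torus $S$ by rigidity. So the real problem is that a finite-index $\Lambda\leq\Aut(F_k)$ fixes the characters $\chi\circ\psi$ of $\Gamma$. On $F_k$ itself this is harmless: invariance of a character $\theta$ under a power $x_i\mapsto x_ix_j^m$ of a transvection forces $\theta(x_j)^m=1$. On a proper finite-index $\Gamma$ it is not harmless, and this happens exactly when $L/L^\circ$ acts nontrivially on $S$.

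\textbf{How property~(T) can close the gap.} The paper itself only points to the companion paper for a proof based on property~(T) of $\Aut(F_k)$, so what follows is one way that input can enter. It works through an algebraic hull, not through primitive elements.
\begin{enumerate}
\item Put $C=C_{\GL_d(\C)}(L)$, and let $M$ be the Zariski closure in $N/C$ of the image of the homomorphism $\alpha\mapsto S_\alpha C$. This homomorphism sends $\iota_w$ to $\sigma(w)C$.
\item For $k\geq4$, finite-index subgroups of $\Aut(F_k)$ have finite abelianization. Hence $M^\circ$ is perfect, and $R(M^\circ)$ is unipotent.
\item The image $L^\circ C/C$ is normal in $M$, so its radical lies in $R(M^\circ)$. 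Therefore a maximal torus $T_0$ of $R(L^\circ)$ maps trivially, that is, $T_0\leq L\cap C=Z(L)$.
\item Consequently $S$, which is the image of $T_0$, is central in $E=L/[L^\circ,L^\circ]R_u(L^\circ)$. Let $\bar\sigma:F_k\to E$ be the composite. Schur's transfer $\tau(w)=\bar\sigma(w)^{[L:L^\circ]}$ is then a homomorphism $F_k\to S$.
\item The map $\tau$ is Zariski dense, since its image contains $\psi(\Gamma)^{[L:L^\circ]}$. It is also equivariant for an action with finite image.
\item So $\tau$ factors through $\Z^k$ and is fixed by a power of each transvection. Hence $\tau(F_k)$ is finite, and $S=1$.
\end{enumerate}
With this in place your Step~4 is fine, and primitive quasi-unipotence is not needed at all.
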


Theorem~\ref{thm:main} supplies virtual solvability on proper free factors. Combining this input with the preceding proposition gives the following result, including finite automorphism orbits.

\Needspace{10\baselineskip}
\begin{theorem}[Proper-factor virtual unipotence at fixed rank]\label{thm:finite-orbit-applications}
Let \rev{$n\geq5$} and $d\geq1$, and suppose that $\rho:F_n\to\GL_d(\C)$ has finite $\Aut(F_n)$-orbit up to simultaneous conjugacy. Then, for every proper free factor $A<F_n$,
\[
\bigl(\Zcl{\rho(A)}\bigr)^\circ
\quad\text{is unipotent}.
\]
In particular, $\rho(A)$ is virtually unipotent. These conclusions apply to every characteristic representation, with no restriction on $d$.
\end{theorem}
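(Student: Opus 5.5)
The plan is to first show that $\bigl(\Zcl{\rho(A)}\bigr)^\circ$ is solvable for every proper free factor $A<F_n$, using the coset theorem. Then we upgrade solvable to unipotent by applying Proposition~\ref{prop:characteristic-solvable} to a characteristic representation built from $\rho|_A$.

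\emph{Step 1: solvability.} Finite orbit up to simultaneous conjugacy implies that $\mathcal S(\rho)$ is finite. Every primitive $p$ is $\alpha(x_1)$ for some $\alpha\in\Aut(F_n)$, and $\chi_{\rho(\alpha(x_1))}$ depends only on the conjugacy class of $\rho\circ\alpha$, of which there are finitely many. Any proper free factor $A$ lies in a factor $A'$ of rank $n-1$ with $F_n=A'*\langle t\rangle$, and $t a$ is primitive for every $a\in A'$. Hence $\{\chi_{\rho(t)\rho(a)}:a\in A'\}$ is finite. Corollary~\ref{cor:finite-spectrum-coset} then makes $\bigl(\Zcl{\rho(A')}\bigr)^\circ$ solvable, and the same holds for $A\le A'$. (This is item~1 of Theorem~\ref{thm:primitive-factors}.)

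\emph{Step 2: producing a characteristic representation.} It suffices to treat a factor $A$ of rank $k=n-1\ge4$, since smaller factors sit inside such an $A$ and unipotence of the identity component passes to closed subgroups' components. Identify $A=F_k$ via a basis $x_1,\dots,x_k$ extended by $x_n$, and put $\sigma=\rho|_A$. Automorphisms of $F_k$ extend to $F_n$ fixing $x_n$, so the stabilizer $\Gamma\le\Aut(F_k)$ of the conjugacy class of $\sigma$ has finite index. Such a $\sigma$ need not be characteristic, so I would pass to the induced-type representation $\tilde\sigma=\bigoplus_{\beta\in\Aut(F_k)/\Gamma}\sigma\circ\beta$. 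Choosing coset representatives, any $\alpha$ permutes the summands up to conjugacy, so $\tilde\sigma\circ\alpha$ is conjugate to $\tilde\sigma$ by a block-permutation times block-conjugation matrix; thus $\tilde\sigma$ is characteristic. Each summand $\sigma\circ\beta$ has image $\sigma(F_k)$, which is virtually solvable by Step~1. The image of $\tilde\sigma$ embeds in a finite product of these, so it is virtually solvable.

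\emph{Step 3: the upgrade.} Proposition~\ref{prop:characteristic-solvable} gives that $\tilde\sigma(F_k)$ is virtually unipotent. Then $L=\Zcl{\tilde\sigma(F_k)}$ has unipotent identity component. Projecting onto the first summand is a surjective homomorphism $L\to\Zcl{\sigma(F_k)}$, which carries $L^\circ$ onto the identity component. Hence $\bigl(\Zcl{\rho(A)}\bigr)^\circ$ is unipotent, and virtual unipotence of $\rho(A)$ follows from $\rho(A)\cap(\cdot)^\circ$ having finite index. Characteristic $\rho$ has orbit of size one, so it is covered.

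I expect the main obstacle to be Step~2, making the restriction genuinely characteristic. The point is that the finite-orbit hypothesis must yield a finite-index stabilizer in $\Aut(F_k)$, which needs the extension map $\Aut(F_k)\to\Aut(F_n)$. The conjugating matrices for the direct sum must also be checked to exist for every $\alpha$, not just for $\alpha\in\Gamma$. If the direct-sum trick is problematic, the fallback is to assume Proposition~\ref{prop:characteristic-solvable} extends verbatim to finite orbits, as the companion paper suggests.
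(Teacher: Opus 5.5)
Your argument is correct and follows the route the paper indicates, which it carries out in the analogous proof of Corollary~\ref{cor:aut-proper-unipotent}: finiteness of the primitive spectrum plus Corollary~\ref{cor:finite-spectrum-coset} gives solvability on a rank-$(n-1)$ factor, and Proposition~\ref{prop:characteristic-solvable}, which applies since $n-1\geq4$, upgrades this to unipotence of the identity component. Your direct sum over the finite $\Aut(F_{n-1})$-orbit of $\rho|_A$ is a valid way to supply the reduction from finite orbits to characteristic representations, which the paper does not spell out; index the summands by the points of the orbit, i.e.\ by right cosets $\Gamma\beta$ rather than $\Aut(F_k)/\Gamma$, since the conjugacy class of $\sigma\circ\beta$ depends only on $\Gamma\beta$.
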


\rev{Thus $n=5$ suffices in every dimension for virtual unipotence on proper factors; see \cite[Theorem~1.2]{PrimitiveSpectra}.} Theorem~\ref{thm:finite-orbit-quasi-unipotence} gives primitive quasi-unipotence already for $n\geq3$. No finite-index bound independent of the representation is asserted.

For representations of $\Aut(F_n)$, this yields the strengthening of the Formanek--Procesi theorem stated in Corollary~\ref{cor:aut-proper-unipotent} below.

The companion paper identifies Conjectures~5.1, 8.3 and~8.5 of~\cite{Gelander} at every fixed rank $n\geq3$; see \rev{\cite[Proposition~8.7]{PrimitiveSpectra}}.

\Needspace{13\baselineskip}
\begin{proposition}[Equivalence of three conjectures]\label{prop:characteristic-equivalences}
For each fixed $n\geq3$, the following assertions are equivalent, with the dimension unrestricted:
\begin{enumerate}
\item For every complex linear representation $R:\Aut(F_n)\to\GL(W)$, the group $R(\Inn(F_n))$ is virtually solvable.
\item Every characteristic complex linear representation of $F_n$ has virtually solvable image.
\item Every characteristic complex linear representation of $F_n$ whose common primitive conjugacy class is quasi-unipotent has virtually solvable image.
\end{enumerate}
These are equivalent formulations of Conjectures~5.1, 8.3 and~8.5 of \cite{Gelander}, respectively. The extra hypothesis in the third assertion is automatic for every $n\geq3$ by Theorem~\ref{thm:finite-orbit-quasi-unipotence}.
\end{proposition}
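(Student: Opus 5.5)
The plan is to prove the cycle (1)$\Rightarrow$(2)$\Rightarrow$(3)$\Rightarrow$(1). The implication (2)$\Rightarrow$(3) is trivial, since (3) only restricts the class of representations considered.

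For (1)$\Rightarrow$(2), let $\sigma:F_n\to\GL_d(\C)$ be characteristic, with conjugators $S_\alpha$. I will not try to make $\alpha\mapsto S_\alpha$ a homomorphism, because the $S_\alpha$ are determined only modulo the centralizer of $\sigma(F_n)$. Instead I would use the canonical action on the image algebra. Let $B\subseteq M_d(\C)$ be the associative span of $\sigma(F_n)$. Conjugation by $S_\alpha$ restricts to an algebra automorphism $\phi_\alpha$ of $B$ with $\phi_\alpha(\sigma(w))=\sigma(\alpha(w))$, and $\phi_\alpha$ is uniquely determined by this formula. Hence $\alpha\mapsto\phi_\alpha$ is a homomorphism $R:\Aut(F_n)\to\GL(B)$. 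For an inner automorphism, $\phi_{\iota_w}$ is conjugation by $\sigma(w)$ on $B$. So $R(\Inn(F_n))$ is the image of $\sigma(F_n)$ under $b\mapsto\mathrm{Ad}(b)|_B$. The kernel of this map consists of elements of $\sigma(F_n)$ centralizing $B$, which lie in the centre of $B$ and are therefore abelian. By (1), $R(\Inn(F_n))$ is virtually solvable, so $\sigma(F_n)$ is an abelian-by-virtually-solvable group, hence virtually solvable.

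For (3)$\Rightarrow$(1), let $R:\Aut(F_n)\to\GL(W)$ and set $\sigma(w)=R(\iota_w)$. Since $\alpha\iota_w\alpha^{-1}=\iota_{\alpha(w)}$, we get $\sigma(\alpha(w))=R(\alpha)\sigma(w)R(\alpha)^{-1}$, so $\sigma$ is characteristic. Its image is $R(\Inn(F_n))$. Primitive elements form one $\Aut(F_n)$-orbit, so the primitive images are all conjugate. By Theorem~\ref{thm:finite-orbit-quasi-unipotence}, this common class is quasi-unipotent, so (3) applies and yields (1). The same theorem gives the final sentence of the statement: the extra hypothesis in (3) is automatic.

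The identification with Conjectures~5.1, 8.3 and~8.5 is a matter of matching these statements against the formulations in \cite{Gelander}.

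The main obstacle is the well-definedness and functoriality of $\phi_\alpha$ in the step (1)$\Rightarrow$(2). It must be checked that $\phi_\alpha$ is independent of the choice of $S_\alpha$, which holds because it is determined on the generators $\sigma(w)$ of $B$. It must also be checked that $\phi_{\alpha\beta}=\phi_\alpha\phi_\beta$, with composition from right to left as in the paper. Finally, the kernel of $\sigma(F_n)\to R(\Inn(F_n))$ must be identified as central, so that virtual solvability lifts through it.
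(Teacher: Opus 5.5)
Your proof is correct and follows the route the paper indicates; the paper defers the details to \cite[Proposition~8.7]{PrimitiveSpectra} but names the ``canonical action on the image algebra''. That action turns a characteristic representation into a genuine representation of $\Aut(F_n)$ whose inner image is $\sigma(F_n)$ modulo a central, hence abelian, kernel; conversely, $w\mapsto R(\iota_w)$ is characteristic, and Theorem~\ref{thm:finite-orbit-quasi-unipotence} makes the extra hypothesis in (3) automatic, so your checks of well-definedness, functoriality and centrality of the kernel are exactly what is needed.
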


\begin{revision}
The following further reduction holds already for $n\geq4$; it is \cite[Corollary~8.9]{PrimitiveSpectra}. Write $R$ and $R_u$ for the connected solvable and unipotent radicals.

\Needspace{11\baselineskip}
\begin{corollary}[The remaining inner-image obstruction]\label{cor:inner-radical-applications}
Let $n\geq4$, let $f:\Aut(F_n)\to\GL(V)$ be a complex representation, and put $K=\Zcl{f(\Inn(F_n))}$. Then
\[
R(K^\circ)=R_u(K^\circ).
\]
Consequently, virtual solvability, virtual nilpotence and virtual unipotence of $f(\Inn(F_n))$ are equivalent. If $f$ is completely reducible, then $K^\circ$ is semisimple and these conditions are equivalent to finiteness of the inner image.

For each fixed $n\geq4$, Conjecture~5.1 of~\cite{Gelander} is therefore equivalent to the assertion that every irreducible finite-dimensional complex representation of $\Aut(F_n)$ has finite inner image.
\end{corollary}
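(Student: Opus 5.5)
The plan is to reduce the corollary to Proposition~\ref{prop:characteristic-solvable} by way of the characteristic representation $\rho(w)=f(\iota_w)$ of $F_n$.

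\textbf{Step 1: $\rho$ is characteristic.} For $n\geq2$ the center of $F_n$ is trivial, so $w\mapsto\iota_w$ is an isomorphism $F_n\to\Inn(F_n)$, and $\rho:F_n\to\GL(V)$ is a representation. For $\alpha\in\Aut(F_n)$ we have $\alpha\iota_w\alpha^{-1}=\iota_{\alpha(w)}$. Hence
\[
\rho(\alpha(w))=f(\alpha)\rho(w)f(\alpha)^{-1},
\]
so $\rho$ is characteristic with $S_\alpha=f(\alpha)$. Moreover $\Zcl{\rho(F_n)}=K$.

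\textbf{Step 2: the radical identity.} Since $n\geq4$, Proposition~\ref{prop:characteristic-solvable} applies with $k=n$ and gives $R(K^\circ)=R_u(K^\circ)$.

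\textbf{Step 3: the three virtual properties are equivalent.} Virtual unipotence implies virtual nilpotence, which implies virtual solvability. Conversely, suppose $f(\Inn(F_n))$ is virtually solvable. Then the Zariski closure of a finite-index solvable subgroup is solvable and has finite index in $K$, so $K^\circ$ is solvable. Thus $K^\circ=R(K^\circ)=R_u(K^\circ)$ is unipotent. Consequently $f(\Inn(F_n))\cap K^\circ$ is a finite-index subgroup that is conjugate into the upper unitriangular group.

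\textbf{Step 4: the completely reducible case.} Suppose $f$ is completely reducible.
\begin{itemize}
\item $\Inn(F_n)$ is normal in $\Aut(F_n)$, so by Clifford's theorem $V$ is completely reducible under $\Inn(F_n)$.
\item The same subspaces are invariant under $K$, so $V$ is completely reducible under $K$, and also under $K^\circ$, which is normal of finite index in $K$ (Clifford again).
\item A normal unipotent subgroup has a nonzero invariant fixed space in each simple summand, as in the proof of Theorem~\ref{thm:main}. Hence $R_u(K^\circ)$ acts trivially on $V$, and since $V$ is faithful, $R_u(K^\circ)=1$.
\item By Step 2, $R(K^\circ)=1$, so $K^\circ$ is semisimple.
\end{itemize}
If the inner image is virtually solvable, Step 3 shows $K^\circ$ is solvable. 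Being also semisimple, it is trivial, so $K$ and the inner image are finite. The converse implication is trivial.

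\textbf{Step 5: the equivalence with Conjecture~5.1.} By Proposition~\ref{prop:characteristic-equivalences}, Conjecture~5.1 is assertion (1) there.
\begin{itemize}
\item If (1) holds, apply Step 4 to an irreducible $f$: its inner image is finite.
\item Conversely, assume every irreducible representation of $\Aut(F_n)$ has finite inner image. Given arbitrary $f$, take an $\Aut(F_n)$-composition series of $V$ and an adapted basis, so that $f$ is block upper triangular.
\item The block diagonal part $f_{\mathrm{ss}}$ has finite inner image, since each block is irreducible.
\item The kernel of $f(\Inn(F_n))\to f_{\mathrm{ss}}(\Inn(F_n))$ lies in the block unitriangular group, hence is unipotent and in particular solvable.
\item Therefore $f(\Inn(F_n))$ is solvable-by-finite, i.e.\ virtually solvable, which is (1).
\end{itemize}

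\textbf{Expected obstacle.} All the substantive input is in Proposition~\ref{prop:characteristic-solvable}, which uses property~(T), so this argument is formal. The only point needing care is Step 4: passing complete reducibility from $\Aut(F_n)$ to $\Inn(F_n)$, then to its Zariski closure $K$, and then to $K^\circ$. I would handle it by citing Clifford's theorem twice, using normality and finite index, together with the observation that invariant subspaces of a group are invariant under its Zariski closure.
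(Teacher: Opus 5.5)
Your proposal is correct and follows the route the paper indicates. The paper records this corollary from the companion paper without proof, as a consequence of Proposition~\ref{prop:characteristic-solvable} applied to the characteristic representation $w\mapsto f(\iota_w)$; you verify that this representation is characteristic exactly as in the proof of Corollary~\ref{cor:aut-proper-unipotent}, and your Steps~3--5 (including Clifford's argument, which in finite dimension needs only normality, not finite index) correctly supply the remaining formal deductions.
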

\end{revision}

\begin{remark}[Scope of the conclusions]\label{rem:characteristic-scope}
Proposition~\ref{prop:characteristic-equivalences} identifies conjectures; their global conclusion is not proved here. The Jordan restrictions, primitive quasi-unipotence, and equivalence of the three conjectures hold for $n\geq3$. Virtual unipotence on every proper free factor is established for \rev{$n\geq5$}, but does not establish virtual solvability of the full inner image. Conjugacy of individual primitive images alone does not imply that a representation is characteristic, so these results do not settle the unrestricted Platonov--Potapchik or Zelmanov conjectures.
\end{remark}

\subsection{Nielsen subgroups in the semisimple inner quotient}\label{sec:nielsen-inner-quotient}

The following result retains the full automorphism action after removing the connected solvable radical of the inner closure. It constrains a hypothetical counterexample to the Formanek--Zelmanov--Lubotzky conjecture; see \rev{\cite[Proposition~8.12]{PrimitiveSpectra}}.

\Needspace{14\baselineskip}
\begin{proposition}[An isomorphism forced by Nielsen relations]\label{prop:nielsen-inner-isomorphism}
Let $n\geq3$, let $f:\Aut(F_n)\to\GL(V)$ be a complex linear representation, and put
\[
K=\Zcl{f(\Inn(F_n))},\qquad S=K^\circ/R(K^\circ),
\]
where $R(K^\circ)$ is the connected solvable radical. Suppose $S\neq1$, and let
\[
\psi:\Aut(F_n)\longrightarrow\Aut_{\mathrm{alg}}(S)
\]
be the action induced by conjugation. For a free basis $x_1,\ldots,x_n$, put
\[
B_i=\langle x_j:j\neq i\rangle,\qquad
 a_i=\psi(\iota_{x_i}),\qquad H_i=\Zcl{\psi(\iota_{B_i})},
\]
where $\iota_{B_i}=\{\iota_w:w\in B_i\}$. Let $R_{i,w}$ send $x_i$ to $x_iw$ and fix $B_i$ pointwise, and set
\[
T_i=\Zcl{\{\psi(R_{i,w}):w\in B_i\}}.
\]
All these closures are taken in the linear algebraic group $\Aut_{\mathrm{alg}}(S)$. Then $T_i$ centralizes $H_i$, and
\begin{equation}\label{eq:nielsen-inner-isomorphism}
c_i:T_i\longrightarrow H_i,\qquad
c_i(t)=a_i^{-1}t a_i t^{-1}
\end{equation}
is an isomorphism of algebraic groups. Both identity components are solvable for $n\geq3$, and unipotent for \rev{$n\geq5$}. The analogous conclusions hold for the left Nielsen subgroups.
\end{proposition}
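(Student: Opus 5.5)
The plan is to work throughout with the explicit automorphism identities in $\Aut(F_n)$ and then pass to Zariski closures. We use the convention $\alpha\iota_u\alpha^{-1}=\iota_{\alpha(u)}$. Three identities carry the argument.

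\begin{enumerate}
\item The map $w\mapsto R_{i,w}$ is a homomorphism $B_i\to\Aut(F_n)$, because $R_{i,w}R_{i,w'}(x_i)=x_iww'$. Hence $T_i$ is the Zariski closure of a subgroup, and so it is an algebraic subgroup of $\Aut_{\mathrm{alg}}(S)$.
\item For $v\in B_i$ we have $R_{i,w}\iota_vR_{i,w}^{-1}=\iota_{R_{i,w}(v)}=\iota_v$. Centralizers are Zariski closed, so this identity passes to closures and shows that $T_i$ centralizes $H_i$.
\item We have $R_{i,w}\iota_{x_i}R_{i,w}^{-1}=\iota_{x_iw}$, so $\iota_{x_i}^{-1}R_{i,w}\iota_{x_i}R_{i,w}^{-1}=\iota_w$. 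Applying $\psi$ gives $c_i(\psi(R_{i,w}))=\psi(\iota_w)$.
\end{enumerate}

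The map $c_i$ is a morphism of varieties. It sends a dense subset of $T_i$ into $\psi(\iota_{B_i})$, and therefore $c_i(T_i)\subseteq H_i$.

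\textbf{$c_i$ is a surjective homomorphism.} Take $t,s\in T_i$. Since $c_i(s)\in H_i$ commutes with $t$, we get
\[
c_i(ts)=a_i^{-1}ta_i\,c_i(s)\,t^{-1}=a_i^{-1}ta_it^{-1}\,c_i(s)=c_i(t)c_i(s).
\]
The image of $c_i$ is therefore closed. It contains the dense subset $\psi(\iota_{B_i})$, so $c_i$ is surjective.

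\textbf{$c_i$ is injective.} The kernel is $Z=T_i\cap C(a_i)$. Every $z\in Z$ commutes with $a_i$ and with $H_i$. The elements $\iota_{x_i}$ and $\iota_{B_i}$ generate $\Inn(F_n)$, so $z$ centralizes $\Zcl{\psi(\Inn(F_n))}$. This closure is the image of $K$ in $\Aut_{\mathrm{alg}}(S)$, and it contains $\Inn(S)$ because $K^\circ$ maps onto $S$. Now suppose an algebraic automorphism $\varphi$ commutes with every $\mathrm{Int}(s)$. Then $\mathrm{Int}(\varphi(s))=\mathrm{Int}(s)$, so $s\mapsto\varphi(s)s^{-1}$ takes values in the finite center $Z(S)$. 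Since $S$ is connected, this map is constant equal to $1$, and hence $\varphi=\mathrm{id}$. So $Z=1$.

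In characteristic zero a bijective homomorphism of algebraic groups is an isomorphism, which completes this part. I expect this kernel computation to be the only genuinely delicate step. It is the one place where $S$ is used, rather than $K^\circ$: the radical has been removed, so the center is finite.

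\textbf{Solvability for $n\geq3$.} Since $S$ is connected and the characteristic is zero, $\Aut_{\mathrm{alg}}(S)$ embeds in $\GL(\mathrm{Lie}\,S)$. All primitive elements form one $\Aut(F_n)$-orbit, so the elements $\iota_p$ with $p$ primitive are pairwise conjugate in $\Aut(F_n)$. Consequently the elements $\psi(\iota_p)$ share one characteristic polynomial in $\GL(\mathrm{Lie}\,S)$. For $w\in B_i$, the element $x_iw$ is primitive. Thus the coset $a_i\,\psi(\iota_{B_i})$ has constant characteristic polynomial. Theorem~\ref{thm:main} then shows that $H_i^\circ$ is solvable, and so is $T_i^\circ\cong H_i^\circ$.

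\textbf{Unipotence for $n\geq5$.} Consider the representation $\rho=\psi\circ\iota:F_n\to\GL(\mathrm{Lie}\,S)$. It is characteristic, since $\rho\circ\alpha=\psi(\alpha)\rho\,\psi(\alpha)^{-1}$. Theorem~\ref{thm:finite-orbit-applications}, applied to the proper free factor $B_i$, gives that $H_i^\circ$ is unipotent, and hence so is $T_i^\circ$.

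\textbf{Left Nielsen subgroups.} Here the same computation applies with $L_{i,w}:x_i\mapsto wx_i$. In that case $L_{i,w}\iota_{x_i}L_{i,w}^{-1}=\iota_{wx_i}$, and the commutator is taken in the order $tab\,a_it^{-1}a_i^{-1}$, suitably adjusted so that it again equals $\psi(\iota_w)$.
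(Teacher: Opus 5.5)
Your proof is correct. For the solvability and unipotence assertions it follows the route the paper indicates:
\begin{itemize}
\item the coset $a_i\,\psi(\iota_{B_i})=\psi(\iota_{x_iB_i})$ consists of images of primitive inner automorphisms, so it has constant characteristic polynomial in $\GL(\mathrm{Lie}\,S)$ and Theorem~\ref{thm:main} applies;
\item $w\mapsto\psi(\iota_w)$ is characteristic, so Theorem~\ref{thm:finite-orbit-applications} applies to the proper factor $B_i$.
\end{itemize}

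The paper does not prove the isomorphism $c_i$ here; it defers it to the companion paper. Your argument fills this in and is complete. It uses three ingredients: the identity $\iota_{x_i}^{-1}R_{i,w}\iota_{x_i}R_{i,w}^{-1}=\iota_w$, the commutation of $T_i$ with $H_i$ to make $c_i$ a homomorphism, and triviality of the kernel. The kernel is trivial because an algebraic automorphism centralizing $\Inn(S)$ is trivial once $Z(S)$ is finite. You correctly identify this kernel step as the place where passing to the semisimple quotient $S$ is needed.

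One small correction is needed in the left Nielsen case. Put $T_i'=\Zcl{\{\psi(L_{i,w}):w\in B_i\}}$, which commutes with $H_i$. The map $c'(t)=ta_it^{-1}a_i^{-1}$ does send $\psi(L_{i,w})$ to $\psi(\iota_w)$. However, it satisfies $c'(ts)=c'(s)c'(t)$, so it is an anti-isomorphism onto $H_i$. Composing with inversion gives the isomorphism $t\mapsto a_ita_i^{-1}t^{-1}$, which sends $\psi(L_{i,w})$ to $\psi(\iota_{w^{-1}})$.
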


The solvability assertion uses Theorem~\ref{thm:main}; the unipotence assertion uses Theorem~\ref{thm:finite-orbit-applications}. \rev{For $n\geq4$, Corollary~\ref{cor:inner-radical-applications} identifies $S$ with $K^\circ/R_u(K^\circ)$, and virtual unipotence of the full inner image is equivalent to $S=1$.} These constraints do not rule out $S\neq1$: unipotent subgroups can generate a connected semisimple group. The remaining issue is compatibility of the Nielsen relations for all indices and all free bases. \rev{The global Formanek--Zelmanov--Lubotzky conjecture remains unresolved by these results.}

\subsection{Quantitative classification of measures for compact groups}\label{sec:compact-measure-applications}

Let $G$ be a \rev{compact Lie group} and fix a faithful unitary representation $G\leq\UU(m)$. The integer $m$ is the degree of this representation. Identify $\Hom(F_n,G)$ with $G^n$ using a free basis; $\Aut(F_n)$ acts by precomposition. Throughout this subsection, closures and density refer to the compact topology. Write
\[
\Epi(F_n,H)=\{\rho\in\Hom(F_n,H):\overline{\rho(F_n)}=H\}
\]
for any closed subgroup $H\leq G$.

The results below are the refinement of Cantat--Dupont--Martin-Baillon's measure theorem~\cite{CDB} established in \rev{\cite[Theorem~9.2 and Corollary~9.6]{PrimitiveSpectra}}. Their proof uses compression of finite component groups and two redundant generators, independently of Theorem~\ref{thm:main}.

Let $J(m)$ be the least integer such that every finite subgroup of $\GL_m(\C)$ has an abelian normal subgroup of index at most $J(m)$, and set
\begin{equation}\label{eq:measure-thresholds}
r_m=m+\lfloor\log_2J(m)\rfloor+1,
\qquad M_m=2m r_m+1.
\end{equation}
To describe the measures, let $H\leq G$ be closed, put $Q_H=H/H^\circ$, and write $\pi_H:H\to Q_H$ for the quotient map. With $m_H$ denoting normalized Haar measure on $H$, set
\[
E_{H,n}=\{(h_1,\ldots,h_n)\in H^n:
\langle\pi_H(h_1),\ldots,\pi_H(h_n)\rangle=Q_H\}.
\]
Whenever $E_{H,n}\neq\varnothing$, define a probability measure on $G^n$ by
\begin{equation}\label{eq:component-haar-measure}
\nu_{H,n}
=\frac{m_H^{\otimes n}|_{E_{H,n}}}
       {m_H^{\otimes n}(E_{H,n})}.
\end{equation}
Thus $\nu_{H,n}$ is product Haar measure conditioned on generating the component group. For connected $H$ it is ordinary product Haar measure on $H^n$; for finite $H$ it is uniform measure on the generating $n$-tuples of $H$.

\begin{revision}
There is also a classification under a qualitative redundancy hypothesis, without a bound involving $m$. A dense representation into $H$ is \emph{two-redundant} if some free factor of rank at most $n-2$ has dense image. For an $\Aut(F_n)$-orbit $\mathcal O\subset\Epi(F_n,Q_H)$, let $\nu_{H,\mathcal O}$ be product Haar measure on $H^n$ conditioned on $(\pi_H(h_1),\ldots,\pi_H(h_n))\in\mathcal O$.

\Needspace{10\baselineskip}
\begin{theorem}[Two-redundancy, measures and orbits]\label{thm:two-redundant-applications}
Let $H$ be a compact Lie group. If an $\Aut(F_n)$-invariant ergodic Borel probability on $H^n$ gives positive measure to two-redundant dense representations, then it equals $\nu_{H,\mathcal O}$ for one orbit $\mathcal O\subset\Epi(F_n,Q_H)$. For connected $H$, this is product Haar measure.

If $H$ is connected and $n\geq4$, every two-redundant dense representation has dense $\Aut(F_n)$-orbit in $H^n$.
\end{theorem}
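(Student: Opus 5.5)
The plan treats the two assertions of Theorem~\ref{thm:two-redundant-applications} separately, organized around the quotient $\pi_H:H\to Q_H$. The orbit statement for connected $H$ will be reduced to the measure statement, so I take the measure classification first.

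For the measure classification, let $\mu$ be ergodic and give positive mass to two-redundant dense representations. The set of such representations is $\Aut(F_n)$-invariant, since the image of a free factor under an automorphism is again a free factor. By ergodicity, $\mu$ is therefore concentrated on it. Pushing $\mu$ forward by $\pi_H^n$ gives an invariant ergodic measure on the finite set $\Epi(F_n,Q_H)$, which must be uniform on a single orbit $\mathcal O$.

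It remains to show that, conditioned on the component data, $\mu$ is Haar along the fibers $(H^\circ)^n$. I would take $\mu$-almost every $\rho$ and a free basis $x_1,\dots,x_n$ such that $\rho(\langle x_1,\dots,x_{n-2}\rangle)$ is dense in $H$. The Nielsen moves $x_n\mapsto x_n w$ and $x_{n-1}\mapsto x_{n-1}w$, with $w\in\langle x_1,\dots,x_{n-2}\rangle$, then let $\mu$ be invariant under translation of the last two coordinates by a dense subset of $H$. The difficulty is that this subset depends on $\rho$.

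To handle this dependence, I would disintegrate $\mu$ over the first $n-2$ coordinates. Invariance under the moves $x_n\mapsto x_n w$, which fix those coordinates, gives invariance of the conditional measures under right translation by the closure of $\rho(\langle x_1,\dots,x_{n-2}\rangle)$, which is $H$. So the conditional measure on the last coordinate is Haar on the relevant coset. Doing the same for $x_{n-1}$ gives full product Haar on the last two coordinates, subject to the component constraint.

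Next I spread this to all coordinates by permutation automorphisms. The redundant factor changes with the basis, but two redundant generators allow a swap: after replacing $x_n$ by a Haar-generic element, the factor $\langle x_3,\dots,x_n\rangle$ is generically dense as well. Here I use that a Haar-random element together with $\rho(x_3),\dots,\rho(x_{n-2})$ still generates a dense subgroup, because generation is a generic condition once a dense factor is present.

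Measure-theoretic ergodic decomposition then identifies $\mu=\nu_{H,\mathcal O}$. The main obstacle is this bootstrapping of Haar invariance from the last two coordinates to all of them while respecting the component-group constraint. That constraint is exactly why we get $\nu_{H,\mathcal O}$ and not $\nu_{H,n}$. For connected $H$, $Q_H$ is trivial and the result is product Haar measure.

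For dense orbits with $H$ connected and $n\geq4$, fix a two-redundant dense $\rho$ and an open set $U\subset H^n$. I would show directly that the orbit closure $\overline{\Aut(F_n)\rho}$ is invariant under right multiplication of the last coordinate by $H$. Using moves $x_n\mapsto x_nw$ with $w$ in the dense factor, $\rho(x_n)$ can be moved to any element of $\rho(x_n)H=H$ up to $\varepsilon$.

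Repeating with $x_{n-1}$ gives both coordinates arbitrary. Choosing them generic makes a different factor of rank $n-2$ dense; with $n\geq4$ the two free coordinates together with one old coordinate generate densely. This permits cycling through all coordinates, so the orbit closure contains a set of the form $\{(g_1,\ldots,g_{n-2})\}\times H^2$ for every point of it. Transitivity of these moves then yields $H^n$. I expect the needed genericity fact to be a standard compact Lie group result: for connected compact $H$, almost every pair generates a dense subgroup.
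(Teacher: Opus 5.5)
\textbf{The measure-classification part has a genuine gap; the orbit part is essentially correct.} Since the paper only records this theorem from its companion paper, I judge your argument on its own terms.

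\textbf{What works.} Your opening steps are sound. The two-redundant dense representations form an invariant Borel set, so ergodicity gives it full measure. The pushforward to $Q_H^n$ is then uniform on one orbit $\mathcal O\subset\Epi(F_n,Q_H)$. After fixing one basis (by a countable partition and an automorphism), disintegration over the first $n-2$ coordinates gives $\mu|_E=\bar\mu\otimes m_H\otimes m_H$ on $E=\{\overline{\langle g_1,\dots,g_{n-2}\rangle}=H\}$. There is no component constraint on the last two coordinates, because $\pi_H\rho(\langle x_1,\dots,x_{n-2}\rangle)=Q_H$ already.

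\textbf{Where it breaks: the swap.} You claim that a Haar-random element together with $\rho(x_3),\dots,\rho(x_{n-2})$ generates a dense subgroup ``because a dense factor is present''. This is false, for three reasons:
\begin{itemize}
\item Density of $\langle g_1,\dots,g_{n-2}\rangle$ says nothing about $\langle g_3,\dots,g_{n-2}\rangle$, which may be trivial.
\item One element never topologically generates a nonabelian $H$.
\item Even with both $g_{n-1},g_n$ random, if $H$ is disconnected their components are uniform in $Q_H$. Together with $\pi_H(g_3),\dots,\pi_H(g_{n-2})$ they generate $Q_H$ only with probability less than one; for $n=4$ this is the proportion of generating pairs of $Q_H$.
\end{itemize}
So $\langle x_3,\dots,x_n\rangle$ is dense only on a set of positive, not full, measure. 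This happens precisely when $\nu_{H,\mathcal O}$ differs from product Haar measure. Beyond this, ``ergodic decomposition identifies $\mu$'' is not an argument. You have conditional Haar structure only on different non-invariant sets, and no mechanism to combine them. Also, for $n\geq5$ the coordinates $x_3,\dots,x_{n-2}$ are never resampled by a single swap.

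\textbf{What is needed.} Four ingredients close the disconnected case.
\begin{enumerate}
\item A correct genericity lemma: if at least two entries of a tuple are Haar-random in prescribed cosets of $H^\circ$, and the component images of all entries generate $Q_H$, then the tuple generates a dense subgroup almost surely. Sketch: a non-dense closure $L$ has $\dim L<\dim H$; pairs lying in conjugates of $L$ form a set of dimension at most $\dim H+\dim L<2\dim H$; and there are countably many conjugacy classes of closed subgroups.
\item A combination device. Let $E_{ij}$ be the set where the factor omitting $x_i,x_j$ is dense, and let $\mathcal R_{ij}$ resample coordinates $i,j$ by Haar measure. Then $\mu\geq\mathcal R_{ij}(\lambda|_{E_{ij}})$ for every $\lambda\leq\mu$.
\item A finite chain of swaps. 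Each swap uses the positive-probability event that the two freshly resampled coordinates have the components of the two being dropped. The chain resamples every coordinate and gives $\mu\geq c\,m_C$, where $m_C$ is normalized Haar measure on a product $C$ of $H^\circ$-cosets with component data in $\mathcal O$.
\item A closing step. Nielsen moves carry $m_C$ to the corresponding $m_{C'}$, so $\mu\geq c'\nu_{H,\mathcal O}$. Since $\nu_{H,\mathcal O}$ is invariant and ergodic measures are extremal, $\mu=\nu_{H,\mathcal O}$.
\end{enumerate}

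\textbf{Connected $H$.} Here the a.e.-pair fact you quote makes the swap almost sure, and the combination is immediate. With $E'=\{\overline{\langle g_{n-1},g_n\rangle}=H\}$ one gets $\mu|_E=\mu|_{E'}$. This measure is both $\bar\mu\otimes m_H^{\otimes2}$ and $m_H^{\otimes(n-2)}\otimes\mu''$, hence equals $\mu(E)\,m_H^{\otimes n}$. For $n=3$, $H$ is a torus and single elements suffice.

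\textbf{The orbit part.} Your argument is essentially right, but it must be run inside the closed invariant set $\overline{\Aut(F_n)\rho}$. A generic pair need not occur in the countable orbit. So choose $(a,b)$ with $\overline{\langle a,b\rangle}=H$ in $\{\bar g\}\times H^2\subseteq\overline{\Aut(F_n)\rho}$, and use the rank-two factor $\langle x_{n-1},x_n\rangle$. Adding an old coordinate, as you propose, gives rank three, which is not admissible for $n=4$. Also, limit points need not be redundant, so the orbit closure is not invariant under translating a coordinate at every one of its points.
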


The quantitative bounds below guarantee two-redundancy and transitivity on the generating tuples of the component group.
\end{revision}

\Needspace{10\baselineskip}
\begin{theorem}[Classification of invariant measures]\label{thm:compact-measure-classification}
Let $G\leq\UU(m)$ be \rev{compact, with no connectedness assumption}, and suppose
\[
n\geq M_m
=2m\bigl(m+\lfloor\log_2J(m)\rfloor+1\bigr)+1.
\]
Every $\Aut(F_n)$-invariant ergodic Borel probability measure on $\Hom(F_n,G)$ equals $\nu_{H,n}$ for a unique closed subgroup $H\leq G$. Conversely, for every closed $H\leq G$, the measure $\nu_{H,n}$ is defined, is invariant and ergodic, and gives full measure to $\Epi(F_n,H)$.
\end{theorem}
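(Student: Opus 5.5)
The plan is to reduce the classification to Theorem~\ref{thm:two-redundant-applications}. The quantitative threshold $n\geq M_m$ is used only to guarantee two things: almost every point is two-redundant, and $\Aut(F_n)$ acts transitively on the relevant component-group tuples.

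\emph{Step 1: locating the support.} Let $\mu$ be an invariant ergodic probability on $\Hom(F_n,G)=G^n$. The map $\rho\mapsto\overline{\rho(F_n)}$ into the space of closed subgroups of $G$, with the Chabauty topology, is Borel. It is exactly $\Aut(F_n)$-invariant, because precomposing with an automorphism does not change the image. Ergodicity therefore makes it $\mu$-almost surely equal to one closed subgroup $H\leq G$. Hence $\mu$ is carried by $\Epi(F_n,H)\subset H^n$. This also gives uniqueness of $H$, since $H$ is recovered as the almost-sure closure of the image.

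\emph{Step 2: two-redundancy and transitivity.} This is the main obstacle. I first want a uniform bound $r_m$ on the number of elements needed to topologically generate a dense subgroup of any closed $H\leq\UU(m)$. The component group $Q_H$ is a finite subgroup of the normalizer of $H^\circ$. By Jordan's theorem it has an abelian normal subgroup of index at most $J(m)$, which contributes at most $\lfloor\log_2J(m)\rfloor$ generators through a chain of subgroups. The torus and semisimple parts of $H^\circ$, together with the abelian part, should need at most $m+1$ further elements.

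Given $\rho\in\Epi(F_n,H)$, I would then perform a compression by Nielsen moves. I choose $r_m$ words whose images topologically generate $H$. Using $n\geq 2mr_m+1$, I apply Nielsen moves so that these images, or approximations generating the same closure, are carried by a free factor of rank at most $n-2$. The factor $2m$ should account for repeatedly absorbing one spare generator into the others, each time strictly increasing the dimension or the component group of the closure. Chains of closed subgroups of $\UU(m)$ have length bounded in terms of $m$ and $J(m)$.

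The same spare generators should give, by a Gaschütz-type argument, transitivity of $\Aut(F_n)$ on $\Epi(F_n,Q_H)$. That set is then a single orbit $\mathcal O$, and $\nu_{H,\mathcal O}=\nu_{H,n}$. If the compression does not go through directly, I would first try the weaker claim that almost every $\rho$ has a two-redundant point in its orbit closure. That weaker claim suffices for the positivity hypothesis of Theorem~\ref{thm:two-redundant-applications}.

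\emph{Step 3: the forward classification.} By Step 2, $\mu$ gives full, in particular positive, measure to two-redundant dense representations into $H$. Theorem~\ref{thm:two-redundant-applications} then yields $\mu=\nu_{H,\mathcal O}=\nu_{H,n}$.

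\emph{Step 4: the converse.} Since $n\geq r_m$, the set $E_{H,n}$ is nonempty, so $\nu_{H,n}$ is defined. It is invariant because Nielsen moves preserve product Haar measure on $H^n$ and preserve the condition of generating $Q_H$.

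Full measure on $\Epi(F_n,H)$ follows from a standard fact: for a compact connected Lie group, almost every pair of Haar-random elements generates a dense subgroup. Conditioning on generating $Q_H$ then makes the closure of the image meet every component of $H$ and contain $H^\circ$. By Step 2, almost every such tuple is two-redundant.

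For ergodicity, every ergodic component of $\nu_{H,n}$ gives positive mass to two-redundant dense representations into $H$. By Theorem~\ref{thm:two-redundant-applications} each component therefore equals $\nu_{H,\mathcal O}$ with $\mathcal O=\Epi(F_n,Q_H)$. So all components coincide, and $\nu_{H,n}$ is ergodic.
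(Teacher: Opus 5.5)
\textbf{There is a genuine gap: Step~2 is not proved, and the mechanism you sketch for it fails.} Your outline does match the route the paper indicates. The threshold $M_m$ is used only to guarantee two-redundancy and transitivity of $\Aut(F_n)$ on $\Epi(F_n,Q_H)$, and Theorem~\ref{thm:two-redundant-applications} then identifies the measure. Steps~1 and~3 are fine. But Step~2 carries all of the quantitative content. Your sketch rests on a false claim: chains of closed subgroups of $\UU(m)$ are \emph{not} bounded in terms of $m$ and $J(m)$. Already in $\UU(1)$, the groups of $2^k$th roots of unity give strict chains of any length. Only chains of closed \emph{connected} subgroups are bounded, by $\ell(\UU(m))=2m-1$. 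Consequently the closures $\overline{\langle g_1,\ldots,g_k\rangle}$ of initial segments of a basis can grow strictly at every step through their component groups. Then no basis image need lie in the closure of the others, and ``absorbing a spare generator'' need not terminate. Appealing to ``approximations generating the same closure'' does not help. Nielsen moves produce exact images, and topological generation is not an open condition (tuples with finite image are dense in $\UU(1)^n$). Also, $r_m$ arbitrary words generating $H$ need not lie in any free factor of small rank.

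\textbf{The missing idea is compression of the finite component groups.} The identity components along a basis can jump at most $2m-1$ times. So $n\geq M_m$ yields a stretch of more than $r_m$ consecutive basis images with constant identity component. There you must perform Nielsen moves to put two basis images into the closure of the others. The input is the uniform bound $m+\lfloor\log_2J(m)\rfloor$ on the number of generators of the component group of a closed subgroup of $\UU(m)$. That bound comes from Jordan's theorem applied to a finite subgroup of $H$ meeting every component. Note that $Q_H$ itself is a quotient, not a subgroup of the normalizer of $H^\circ$. Transitivity on $\Epi(F_n,Q_H)$ likewise needs a Nielsen argument exploiting this structure; a bare appeal to Gasch\"utz is not a proof. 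General connectivity bounds for finite groups depend on the group, e.g.\ on the maximal size of an irredundant generating set, which is unbounded already for cyclic groups.

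\textbf{The fallback and Step~4.} Your fallback fails. Two-redundancy is $\Aut(F_n)$-invariant, so every orbit either lies in that set or misses it. An orbit closure meeting a non-open invariant set therefore gives that set no positive measure. Step~4 inherits the gap, since its full-measure and ergodicity claims use Step~2. Its claim that the closure contains $H^\circ$ also needs more than the statement about pairs. The $h_i$ lie in nonidentity cosets, and powers may degenerate: every reflection in $\mathrm{O}(2)$ squares to $1$. One needs words in $\ker(F_n\to Q_H)$ whose joint law on $(H^\circ)^2$ is absolutely continuous.
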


In particular, every invariant Borel probability measure is a mixture of the measures in Theorem~\ref{thm:compact-measure-classification}. \rev{The subgroup $H$ need not be connected, even when $G$ is connected.} The conditioning in \eqref{eq:component-haar-measure} is therefore part of the classification.

There is a smaller bound when the measure gives full measure to dense representations into $G$. Let $\ell(G)$ be the maximal length of a strict chain of closed connected subgroups from $1$ to $G$. In particular, $\ell(\UU(m))=2m-1$ and $\ell(\SU(m))=2m-2$ for $m\geq2$; see~\cite{BLScompact}.

\Needspace{11\baselineskip}
\begin{corollary}[\rev{Haar uniqueness and dense orbits}]\label{cor:compact-dense-measures}
Let $G\leq\UU(m)$ be compact, connected and nontrivial. If
\[
n\geq\ell(G)\bigl(m+\lfloor\log_2J(m)\rfloor+1\bigr)+2,
\]
then $m_G^{\otimes n}$ is the unique $\Aut(F_n)$-invariant Borel probability measure on $G^n$ giving full measure to $\Epi(F_n,G)$. \rev{Moreover, every dense representation has dense $\Aut(F_n)$-orbit in $G^n$.} In particular, for $G=\SU(m)$, $m\geq2$, it suffices that
\[
n\geq(2m-2)\bigl(m+\lfloor\log_2J(m)\rfloor+1\bigr)+2.
\]
\end{corollary}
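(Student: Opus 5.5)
The plan is to reduce both assertions to Theorem~\ref{thm:two-redundant-applications}, applied with $H=G$. Since $G$ is connected, $Q_G$ is trivial, so $\nu_{G,\mathcal O}$ is just $m_G^{\otimes n}$. The whole proof then comes down to one combinatorial claim:
\begin{quote}
under the stated bound on $n$, every $\rho\in\Epi(F_n,G)$ is two-redundant.
\end{quote}

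\textbf{Measures, given the claim.} Let $\mu$ be an invariant probability measure giving full measure to $\Epi(F_n,G)$. Decompose $\mu$ into ergodic components. Almost every component still gives full measure to the invariant Borel set $\Epi(F_n,G)$. By the claim, each such component gives positive (indeed full) measure to two-redundant dense representations. Theorem~\ref{thm:two-redundant-applications} then identifies each component with $m_G^{\otimes n}$, so $\mu=m_G^{\otimes n}$.

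\textbf{Dense orbits, given the claim.} Check that the bound forces $n\geq4$. Since $G$ is nontrivial we have $\ell(G)\geq1$. Also $m+\lfloor\log_2J(m)\rfloor+1\geq2$. Hence $n\geq4$, and the second part of Theorem~\ref{thm:two-redundant-applications} gives density of every orbit of a dense representation.

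\textbf{Proof of the claim: a greedy choice of basis.} Fix a free basis $x_1,\dots,x_n$ and a dense $\rho$. Select basis elements one at a time, keeping $x_j$ only if it strictly enlarges the closure $\overline{\rho(\langle\text{selected}\rangle)}$. Continue until the closure is $G$. This must happen, because the closure of all the $\rho(x_i)$ is $G$.

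The selected elements form a sub-basis after permuting the basis, and a permutation is an automorphism. They generate a free factor whose image is dense, with rank at most the length $L$ of a strict chain of closed subgroups
\[
1\le H_1<H_2<\cdots<H_L=G .
\]
It therefore suffices to show
\[
L\leq\ell(G)\bigl(m+\lfloor\log_2J(m)\rfloor+1\bigr),
\]
since the hypothesis then gives $L\leq n-2$.

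To bound $L$, split the chain according to the identity components $H_i^\circ$. These form a chain of closed connected subgroups, so $H_i^\circ$ strictly increases at most $\ell(G)$ times. The remaining task is to bound the length of each run on which $H_i^\circ=C$ is fixed by $r_m=m+\lfloor\log_2J(m)\rfloor+1$. Within such a run, the groups $H_i/C$ form a strict chain of finite subgroups of $N_G(C)/C$.

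\textbf{Main obstacle: bounding a run.} I would attack this in two stages.
\begin{enumerate}
\item Use Jordan's theorem in $\GL_m(\C)$ to pass to abelian normal subgroups of index at most $J(m)$. Strict inclusions that change this index can occur at most $\lfloor\log_2J(m)\rfloor$ times, since each index at least doubles.
\item Treat the abelian steps, which live in a torus-like part of rank at most $m$, as chains of finitely generated abelian groups of rank at most $m$. Each such chain should contribute at most $m+1$ steps.
\end{enumerate}
This last part is the delicate one. A naive chain of finite abelian groups can be arbitrarily long, as with $\Z/2\subset\Z/4\subset\cdots$. So in place of the plain greedy selection, I expect to need a compression step: a Nielsen move $x_j\mapsto x_jw$ that lets one new generator absorb several cyclic enlargements at once. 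Accordingly, the greedy argument should be run on generators that are modified by such Nielsen moves, and the resulting count should give exactly $r_m$ steps per fixed identity component.
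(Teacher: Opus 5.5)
\textbf{Genuine gap: the two-redundancy claim is not proved.} Your reduction is the right one. With $H=G$ connected, Theorem~\ref{thm:two-redundant-applications} gives the measure statement via ergodic decomposition. It also gives the dense-orbit statement, since the bound forces $n\geq4$. For uniqueness you should add that $m_G^{\otimes n}$ is invariant and gives full measure to $\Epi(F_n,G)$, which is standard for connected $G$ and $n\geq2$. So everything rests on the claim that every dense $\rho$ is two-redundant, and that claim is exactly what you have not proved.

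The argument you outline does not work as written. The plain greedy chain fails, as you note yourself. Stage~1 is not well posed: the abelian normal subgroups of the successive finite groups $H_i/C$ are not canonical, so ``the index'' along the chain is not defined. Doubling only becomes available after fixing one abelian normal subgroup of the terminal group of the run and passing to the quotient. Even then, the steps invisible in that quotient are precisely the unbounded abelian steps. Stage~2 is only the expectation that some compression exists. Without an explicit Nielsen procedure there is no bound on a run, since strict chains of closed subgroups with fixed identity component are unbounded.

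\textbf{The missing compression step.} Do the greedy step only modulo a fixed abelian normal subgroup, and push everything else into it.
\begin{enumerate}
\item Suppose the current free factor has closure with identity component $C$. Let $H$ be the closure reached just before the identity component next grows, and put $Q=H/C$.
\item Choose a finite $F\leq H$ with $FC=H$; a finite subgroup of $N_H(T)$ meeting every component works. Jordan's theorem gives an abelian normal $A_F\trianglelefteq F$ of index at most $J(m)$. Its image $A\trianglelefteq Q$ has index at most $J(m)$.
\item Every subgroup of $A$ is generated by at most $m$ elements. Indeed, finite abelian subgroups of $\UU(m)$ are diagonalizable, and subgroups of $A$ are quotients of such groups.
\item Select generators greedily only by their images in $Q/A$, starting from the image of the earlier factor. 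This uses at most $\lfloor\log_2J(m)\rfloor$ generators, since each strict step doubles the order.
\item Right-multiply every other generator of the run by a word in the earlier and selected generators so that its image lies in $A$. This is a Nielsen move.
\item Modulo $C$ these images lie in the abelian group $A$. Nielsen moves among them therefore act through elementary matrices in $\GL_u(\Z)$. Smith normal form leaves at most $m$ of them nontrivial modulo $C$; the rest lie in $C$ and can be dropped from the free factor.
\item Add the one untouched generator that enlarges the identity component.
\end{enumerate}
Each run thus costs at most $\lfloor\log_2J(m)\rfloor+m+1=r_m$ generators. The $+1$ is the jump generator, not an extra abelian step, so your count of $m+1$ abelian steps should be corrected. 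There are at most $\ell(G)$ runs, so some free factor of rank at most $\ell(G)r_m\leq n-2$ has dense image.

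This ``compression of finite component groups'' is what the paper attributes to the companion proof. Without it, your proposal does not establish the corollary.
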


The general classification bound displayed in \cite[Section~5.2.5]{CDB} is
\[
n\geq2m\bigl(1+\tfrac52m+\log_2J(m)\bigr)+3.
\]
The bound $M_m$ lowers this threshold by at least $3m^2+2$ generators after rounding to integers. For $m\geq71$, Collins's formula $J(m)=(m+1)!$~\cite{Collins} makes \eqref{eq:measure-thresholds} explicit; the simpler sufficient bound
\[
n\geq\left\lceil2m^2\log_2m\right\rceil
\]
also yields the full classification, as shown in \cite[Section~9]{PrimitiveSpectra}. These are sufficient quantitative bounds; the general uniqueness conjecture for connected compact targets at every $n\geq3$ is not proved by these results.

\begin{revision}
\subsection{Compact redundancy at rank three}\label{sec:compact-spectral-applications}

Here closures are taken in the compact topology, and redundancy means that some proper free factor has dense image.

For $g\in\UU(d)$ with eigenvalues $\lambda_1,\ldots,\lambda_d$, put $E(g)=\langle\lambda_1,\ldots,\lambda_d\rangle\leq S^1$. The element $g$ is \emph{neat} if $E(g)$ is torsion-free. This is equivalent to connectedness of its cyclic compact closure. The following statements are \cite[Theorem~10.4 and Corollary~10.6]{PrimitiveSpectra}.

\Needspace{12\baselineskip}
\begin{theorem}[A maximal-torus criterion for redundancy]\label{thm:compact-spectral-applications}
Let $G\leq\UU(d)$ be compact and connected, and let $\rho:F_3=\langle x,y,z\rangle\to G$ have dense image. Suppose that
\[
T=\overline{\langle\rho(x)\rangle}\text{ is a maximal torus of }G,
\qquad
H=\overline{\langle\rho(y),\rho(z)\rangle}\text{ is connected}.
\]
Then some primitive $p\in\langle y,z\rangle$ satisfies $\overline{\langle\rho(x),\rho(p)\rangle}=G$. Thus $\rho$ is redundant and Zariski redundant. Sufficient conditions on the three basis images are
\[
E(\rho(y)),E(\rho(z))\text{ torsion-free},
\qquad \rank_\Z E(\rho(x))=\rank G.
\]
There is no bound on the rank or dimension of $G$.
\end{theorem}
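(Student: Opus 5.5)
The plan is to reduce the statement to an avoidance problem inside the compact connected group $H$. Everything depends on one structural fact: only finitely many closed subgroups of $G$ contain the maximal torus $T$.

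First I will establish that finiteness. If $M\leq G$ is closed and contains $T$, then $M^\circ$ contains $T$. Hence $M^\circ$ is the subgroup generated by $T$ and the root subgroups attached to a closed subset of the root system of $(G,T)$, which leaves finitely many choices for $M^\circ$. Moreover $M\leq N_G(M^\circ)$, and $N_G(M^\circ)/M^\circ$ is finite, since $M^\circ$ contains a maximal torus and so this quotient is controlled by a subquotient of the Weyl group. So each $M^\circ$ has only finitely many overgroups $M$ of this kind. Let $M_1,\ldots,M_k$ be the proper closed subgroups of $G$ containing $T$.

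Next I will show that an element $\rho(p)$ fails to generate $G$ together with $\rho(x)$ exactly when $\rho(p)\in M_1\cup\cdots\cup M_k$. Indeed, the closure $\overline{\langle\rho(x),\rho(p)\rangle}$ always contains $T$, so it is either $G$ or one of the $M_j$. Set $L_j=H\cap M_j$. Each $L_j$ is a proper closed subgroup of $H$: otherwise $H\leq M_j$, and then $\langle T,H\rangle$ would lie in $M_j$, contradicting density of $\rho$. Since $H$ is connected, each $L_j$ has Haar measure zero and empty interior in $H$. Therefore the whole statement reduces to the following claim.

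\emph{Claim.} Let $\langle y,z\rangle$ map densely into a compact connected group $H$. Then the images of its primitive elements are not covered by finitely many proper closed subgroups $L_1,\ldots,L_k$ of $H$.

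Proving this Claim is the main obstacle. I would first pull it back to the free group. Put $N_j=\rho^{-1}(L_j)\cap\langle y,z\rangle$. Each $N_j$ has infinite index: if $N_j$ had finite index, then $\overline{\rho(N_j)}$ would be a finite-index closed subgroup of the connected group $H$, hence equal to $H$, forcing $L_j=H$. Note that the primitive set of $\langle y,z\rangle$ contains all conjugates $b^{k}ab^{m}$ of $ab^{m+k}$ for every basis $(a,b)$, and it is invariant under Nielsen moves.

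With this in hand, I would argue as follows. Suppose the primitives are covered by $N_1\cup\cdots\cup N_k$. Applying pigeonhole to the elements $ab^m$ puts some power $b^{e}$ with $e\neq0$ into an $N_j$. Repeating this for all bases, I would aim to show that the $N_j$ absorb enough of $\langle y,z\rangle$ for a B.~H.~Neumann covering lemma to apply: a group covered by finitely many cosets of subgroups has one of those subgroups of finite index. That conclusion contradicts infinite index. If this combinatorial route stalls, the fallback is a Baire-category argument in $H$, using Nielsen moves to make the image pair $(\rho(a),\rho(b))$ generic. The cosets $\rho(a)\overline{\langle\rho(b)\rangle}$ then cannot lie in finitely many nowhere-dense proper subgroups.

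Finally I will verify the stated sufficient conditions. If $\rank_\Z E(\rho(x))=\rank G$, then the closure of $\langle\rho(x)\rangle$ is a torus of full rank, hence a maximal torus. If $E(\rho(y))$ and $E(\rho(z))$ are torsion-free, then the closures of $\langle\rho(y)\rangle$ and $\langle\rho(z)\rangle$ are connected tori, so $H$ is generated by connected subgroups and is itself connected. Zariski redundancy then follows because the compact closure of $\langle\rho(x),\rho(p)\rangle$ equals $G$, and $G$ is Zariski dense in its complexification.
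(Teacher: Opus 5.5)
Your reduction is sound. Only finitely many closed subgroups $M_1,\dots,M_k$ of $G$ contain $T$, and the theorem does come down to your Claim. But the Claim is exactly where the proposal stops being a proof, and neither suggested route closes it.

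The combinatorial route replaces the Claim by a stronger, purely group-theoretic statement: the primitive elements of $F_2$ are not covered by finitely many infinite-index subgroups. You do not prove this. Your pigeonhole step only produces subgroups like $\langle b^e,ab^m\rangle$, which may well have infinite index (e.g.\ $|e|\geq2$), so Neumann's lemma has nothing to act on.

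The Baire fallback fails as stated. Baire on $\rho(a)\overline{\langle\rho(b)\rangle}$ only gives some $L_j$ containing $\rho(a)$ and the identity component of $\overline{\langle\rho(b)\rangle}$. That yields a contradiction only when $\rho(b)$ is neat, and Nielsen moves need not produce a primitive with neat image. For instance, take $H=S^1\times S^1$ with coordinate characters $\chi_1,\chi_2$, and let $\rho(y)=(-e^{2\pi i\sqrt2},-1)$ and $\rho(z)=(-1,e^{2\pi i\sqrt2})$. The image is dense. But for coprime $(a,b)$ the nontrivial character $\chi_1^{b}\chi_2^{-a}$ takes the value $(-1)^{a^2+ab+b^2}=-1$ on $\rho(y)^a\rho(z)^b$. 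So no primitive image has connected cyclic closure. A countable Nielsen orbit also cannot be steered into a residual set. Your Baire argument does work under the sufficient conditions, where $\rho(z)$ is neat, but the theorem assumes only that $H$ is connected.

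The missing idea is to use connectedness of $H$ through conjugation.
\begin{itemize}
\item The primitive set is conjugation invariant and $\rho(\langle y,z\rangle)$ is dense in $H$. Hence the closed set $L_1\cup\cdots\cup L_k$ contains the whole $H$-conjugacy class $C_p$ of each primitive image $\rho(p)$.
\item Some $L_j\cap C_p$ then has interior in $C_p$. So $\{h\in H:h\rho(p)h^{-1}\in L_j\}$ is a real-analytic subset of the connected group $H$ with nonempty interior, hence equal to $H$. Thus $C_p\subseteq L_j$.
\item Consequently $L_j$ contains the closed normal subgroup $N_p$ generated by $\rho(p)$. The quotient $H/N_p$ is abelian, because the image of $\langle y,z\rangle$ in it is dense and is a quotient of $\langle y,z\rangle/\langle\!\langle p\rangle\!\rangle\cong\Z$. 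So every primitive image lies in some $L_j\supseteq[H,H]$, which is already a contradiction if $H$ is semisimple.
\item Otherwise pass to the torus $H/[H,H]$. The image of each such $L_j$ lies in the kernel of a nontrivial character $\chi_j$. Since $\chi_j\circ\rho$ has infinite image, the kernel of the induced map $\Z^2\to S^1$ has rank at most one and contains at most two primitive vectors. Finitely many $j$ therefore cannot cover all primitive vectors.
\end{itemize}

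Finally, for the sufficient conditions you should justify that $\overline{\langle\rho(x)\rangle}$ is connected. It is abelian with identity component a maximal torus $T_0$, hence lies in $C_G(T_0)=T_0$.
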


Connected cyclic closures alone do not imply redundancy; the full-rank condition on $E(\rho(x))$ is a separate hypothesis. The next consequence uses Theorem~\ref{thm:main} through the proper-factor restriction in Theorem~\ref{thm:primitive-factors}.

\Needspace{6\baselineskip}
\begin{corollary}[Finite primitive spectra with neat generators]\label{cor:compact-neat-applications}
Let $n\geq3$ and let $\rho:F_n\to\UU(d)$ have only finitely many characteristic polynomials on primitive elements. If the images of some free basis are neat, then $\rho$ is trivial.
\end{corollary}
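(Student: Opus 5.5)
The plan is to first show that the images of the neat basis commute, and then use the finiteness of the spectrum on a torus coset to force each generator to be trivial.

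\textbf{Step 1: compact closures of proper factors have torus identity components.} Let $x_1,\ldots,x_n$ be a free basis whose images $g_i=\rho(x_i)$ are neat. Then each compact closure $T_i=\overline{\langle g_i\rangle}$ is connected, hence a torus. Fix $i$ and let $B_i=\langle x_j:j\neq i\rangle$ be the corresponding proper free factor. By Theorem~\ref{thm:primitive-factors}(1), or directly by Corollary~\ref{cor:finite-spectrum-coset} applied to the coset $g_i\rho(B_i)$ (all $x_iw$ with $w\in B_i$ are primitive), the group $\bigl(\Zcl{\rho(B_i)}\bigr)^\circ$ is solvable. Let $C_i$ be the compact closure of $\rho(B_i)$. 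Then $C_i^\circ$ is a connected subset of $\Zcl{\rho(B_i)}$ in the Euclidean topology. It therefore lies in a single Zariski component, namely the identity component, so $C_i^\circ$ is solvable. A compact connected solvable Lie group is a torus, so $C_i^\circ$ is abelian.

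\textbf{Step 2: the image is abelian.} For $j\neq i$, the torus $T_j$ is connected and contains $1$, and it lies in $C_i$. Hence $T_j\subseteq C_i^\circ$. Since $n\geq3$, any two indices $j,k$ avoid some third index $i$, so $g_j,g_k\in C_i^\circ$ and they commute. Therefore $\rho(F_n)$ is abelian, and all $g_i$ and $T_i$ lie in a common compact torus. We may simultaneously diagonalize them in $\UU(d)$.

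\textbf{Step 3: continuity forces each generator to be trivial.} The elements $x_1x_2^k$ with $k\in\Z$ are primitive. Hence $\chi_{g_1g_2^k}$ ranges over a finite set $\Sigma$. The set of $t$ with $\chi_{g_1t}\in\Sigma$ is closed, so it contains $T_2$. Since $T_2$ is connected and $\Sigma$ is finite, $\chi_{g_1t}=\chi_{g_1}$ for all $t\in T_2$. In the diagonal basis, write $g_1=\diag(\mu_1,\ldots,\mu_d)$ and let $s_1,\ldots,s_d$ be the coordinate characters of $T_2$. Then the multiset $\{\mu_js_j(t)\}$ is constant along $T_2$. Each map $t\mapsto\mu_js_j(t)$ is continuous on the connected space $T_2$, takes values in the fixed finite multiset, and equals $\mu_j$ at $t=1$. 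Hence $s_j\equiv1$ for every $j$. Since $g_2\in T_2$, this gives $g_2=1$. Permuting the roles of the indices (using $x_ix_j^k$), every $g_j=1$, so $\rho$ is trivial.

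The step I expect to need the most care is Step~1. One must transfer solvability of the Zariski identity component to the compact closure, and the neatness hypothesis is used precisely there: it places each $T_j$ inside $C_i^\circ$ rather than merely in $C_i$. Without it, the example of a permutation matrix times $\diag(t,t^{-1})$ shows that a torus coset with constant characteristic polynomial need not be trivial.
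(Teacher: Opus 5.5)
Your argument is correct, but it takes a different route from the one the paper indicates. The paper attributes the corollary to the proper-factor restriction, Theorem~\ref{thm:primitive-factors}. The short route there is item~3:
\begin{itemize}
\item Each cyclic closure $\overline{\langle\rho(x_j)\rangle}$ is a torus, so it lies in the Zariski identity component.
\item Hence a proper factor such as $\langle x_j:j\neq i\rangle$, of rank $n-1\geq2$, has connected Zariski closure.
\item Item~3 then makes its image unipotent, and a unipotent subgroup of $\UU(d)$ is trivial.
\item Since $n\geq3$, every basis element lies in such a factor.
\end{itemize}

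You use only the weaker input of Corollary~\ref{cor:finite-spectrum-coset}, namely that $\bigl(\Zcl{\rho(B_i)}\bigr)^\circ$ is solvable. You turn this into commutativity of the generators using two facts: a compact connected solvable Lie group is a torus, and the third index available when $n\geq3$. You then replace the unipotence conclusion of item~3 by a direct connectedness argument on the torus coset $g_1T_2$. In effect, your Step~3 is a unitary version of the diagonal-character argument that underlies item~3.

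The paper's route is a two-line reduction, but it rests on item~3, whose proof is deferred to the companion paper. Yours is self-contained given Theorem~\ref{thm:main} and standard compact Lie theory.

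One small correction to your closing remark. Neatness is not used only in Steps~1--2 to place $T_j$ inside $C_i^\circ$. It is also essential in Step~3, where connectedness of $T_2$ forces $\chi_{g_1t}$, and then each $\mu_js_j(t)$, to be constant.
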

\end{revision}

\begin{revision}
\section{Infinite primitive spectra and nonlinearity}\label{sec:aut-nonlinearity}

Theorem~\ref{thm:main} has the following consequence for faithful representations of free groups. It implies the nonlinearity theorem of Formanek and Procesi~\cite{FormanekProcesi}. Throughout this section, $P_n$ denotes the set of primitive elements of $F_n$, and for a representation $\rho:F_n\to\GL_d(k)$ we write
\[
\mathcal S(\rho)=\{\chi_{\rho(p)}:p\in P_n\}
\]
for its primitive characteristic-polynomial spectrum.

\Needspace{10\baselineskip}
\begin{theorem}[Infinite primitive spectrum]\label{thm:infinite-primitive-spectrum}
Let $n\geq3$, let $k$ be any field, and let $\rho:F_n\to\GL_d(k)$ be faithful. Then $\mathcal S(\rho)$ is infinite.

More generally, if a representation $\rho:F_n\to\GL_d(k)$ has finite primitive spectrum, then $\rho(A)$ is virtually solvable for every proper free factor $A<F_n$.
\end{theorem}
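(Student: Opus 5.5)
We first prove the general statement and then deduce the faithful case. Suppose $\mathcal S(\rho)$ is finite and let $A<F_n$ be a proper free factor. Extend a free basis of $A$ to a free basis $a_1,\ldots,a_m,x_1,\ldots,x_{n-m}$ of $F_n$, with $m<n$, so at least one basis element $x=x_1$ lies outside $A$. For every $w\in A$, the tuple $a_1,\ldots,a_m,x_1w,x_2,\ldots,x_{n-m}$ is again a free basis, since it is obtained by a Nielsen move that multiplies $x_1$ on the right by a word in the other basis elements. Hence $x_1w\in P_n$ for all $w\in A$. It follows that the set $\{\chi_{\rho(x_1)h}:h\in\rho(A)\}$ is contained in $\mathcal S(\rho)$, so it is finite. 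Corollary~\ref{cor:finite-spectrum-coset}, applied to $H=\rho(A)$ and the coset of $\rho(x_1)$, then makes $\rho(A)$ virtually solvable. This corollary holds over any field, because Theorem~\ref{thm:main} does.

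For the first assertion, let $\rho$ be faithful and suppose, for contradiction, that $\mathcal S(\rho)$ is finite. Since $n\geq3$, the proper free factor $A=\langle x_1,x_2\rangle$ has rank two and is free of rank two. Faithfulness gives $\rho(A)\cong F_2$. By the general statement, $F_2$ would then be virtually solvable. This is impossible: every finite-index subgroup of $F_2$ is free of rank at least two, by the Nielsen--Schreier theorem and the index formula, and a nonabelian free group is not solvable. This contradiction shows that $\mathcal S(\rho)$ is infinite.

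The only step needing care is the verification that $x_1w$ is primitive for every $w\in A$. This is exactly where properness of $A$ is used. It is also where $n\geq3$ enters: a proper factor must have rank at least two so that its image can be nonabelian free. Everything else follows directly from Corollary~\ref{cor:finite-spectrum-coset}, so I do not expect a genuine obstacle.
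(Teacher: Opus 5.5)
Your proposal is correct and follows the paper's proof essentially verbatim. The Nielsen move $x_1\mapsto x_1w$ places the coset $\rho(x_1)\rho(A)$ inside the primitive spectrum, Corollary~\ref{cor:finite-spectrum-coset} then gives virtual solvability, and faithfulness on a proper factor of rank at least two yields the contradiction. The only difference is cosmetic: you use a rank-two factor where the paper uses one of rank $n-1$.
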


\begin{proof}
Suppose that $\mathcal S(\rho)$ is finite. Let $A<F_n$ be a proper free factor, extend a free basis of $A$ to one of $F_n$, and let $c$ be a complementary basis element. Every element $ca$, $a\in A$, is primitive: the map sending $c$ to $ca$ and fixing the other basis elements is an automorphism. Thus the coset $\rho(c)\rho(A)$ has only finitely many characteristic polynomials. Corollary~\ref{cor:finite-spectrum-coset}, which follows from Theorem~\ref{thm:main}, implies that $\rho(A)$ is virtually solvable.

If $\rho$ is faithful, choose $A$ of rank $n-1\geq2$. Then $\rho(A)\cong F_{n-1}$ is not virtually solvable, a contradiction.
\end{proof}

For $g\in F_n$, write $\iota_g$ for the inner automorphism $v\mapsto gvg^{-1}$. Thus $\iota_g\iota_h=\iota_{gh}$, and for every $\alpha\in\Aut(F_n)$,
\begin{equation}\label{eq:inner-equivariance}
\alpha\iota_g\alpha^{-1}=\iota_{\alpha(g)}.
\end{equation}

\Needspace{9\baselineskip}
\begin{corollary}[Formanek--Procesi]\label{thm:aut-nonlinearity}
Let $n\geq3$ and let $k$ be any field. Then $\Aut(F_n)$ admits no faithful finite-dimensional representation over $k$. Moreover, for every representation $R:\Aut(F_n)\to\GL_d(k)$ and every proper free factor $A<F_n$, the group $R(\iota_A)$ is virtually solvable, where $\iota_A=\{\iota_a:a\in A\}$.
\end{corollary}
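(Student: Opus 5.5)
The plan is to reduce Corollary~\ref{thm:aut-nonlinearity} to Theorem~\ref{thm:infinite-primitive-spectrum} by composing $R$ with the inner-automorphism map. Given $R:\Aut(F_n)\to\GL_d(k)$, define
\[
\rho:F_n\longrightarrow\GL_d(k),\qquad \rho(w)=R(\iota_w).
\]
This is a homomorphism because $\iota_g\iota_h=\iota_{gh}$. The key observation is that $\rho$ has a single primitive characteristic polynomial. Indeed, fix a free basis $x_1,\ldots,x_n$. For every primitive $p\in P_n$ there is $\alpha\in\Aut(F_n)$ with $\alpha(x_1)=p$. By \eqref{eq:inner-equivariance}, $\iota_p=\alpha\iota_{x_1}\alpha^{-1}$, so
\[
\rho(p)=R(\alpha)\rho(x_1)R(\alpha)^{-1}.
\]
Hence $\mathcal S(\rho)=\{\chi_{\rho(x_1)}\}$ is finite.

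The second part of Theorem~\ref{thm:infinite-primitive-spectrum} then gives that $\rho(A)=R(\iota_A)$ is virtually solvable for every proper free factor $A<F_n$. This is the second assertion of the corollary.

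For nonlinearity, suppose $R$ is faithful. The map $w\mapsto\iota_w$ is injective for $n\geq2$, since the center of $F_n$ is trivial. Therefore $\rho$ is a faithful representation of $F_n$. The first part of Theorem~\ref{thm:infinite-primitive-spectrum} then forces $\mathcal S(\rho)$ to be infinite, which contradicts the previous paragraph. Alternatively, one can take $A$ of rank $n-1\geq2$: then $R(\iota_A)\cong F_{n-1}$ would be virtually solvable, which is impossible.

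There is no serious obstacle, since all the work lies in Theorem~\ref{thm:main} through Corollary~\ref{cor:finite-spectrum-coset}. The only points to check are the transitivity of $\Aut(F_n)$ on primitive elements and the triviality of the center of $F_n$. Both are standard, and the first is immediate from the definition of a primitive element as a member of some free basis.
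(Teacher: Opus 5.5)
Your proposal is correct and follows the paper's proof essentially step for step. You set $\rho(w)=R(\iota_w)$ and use transitivity of $\Aut(F_n)$ on primitive elements with \eqref{eq:inner-equivariance} to make $\mathcal S(\rho)$ a singleton; Theorem~\ref{thm:infinite-primitive-spectrum} then gives both conclusions, and faithfulness of $R$ is ruled out because $w\mapsto\iota_w$ is injective.
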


\begin{proof}
Set $\rho(w)=R(\iota_w)$. Since $\Aut(F_n)$ acts transitively on $P_n$, identity~\eqref{eq:inner-equivariance} shows that all $\rho(p)$, $p\in P_n$, are conjugate. Hence $\mathcal S(\rho)$ is a singleton. By Theorem~\ref{thm:infinite-primitive-spectrum}, $\rho$ is not faithful and $\rho(A)=R(\iota_A)$ is virtually solvable for every proper free factor $A$. Since $F_n$ has trivial center, the map $w\mapsto\iota_w$ is injective. Therefore $R$ is not faithful either.
\end{proof}
\end{revision}

\begin{remark}[Free products with an infinite cyclic factor]\label{rem:free-product-aut}
\rev{The underlying coset argument also applies to any nontrivial group $G$.} Put $Q=G*\langle c\rangle$, let $H_G=\{\iota_g:g\in G\}\leq\Aut(Q)$, and set $x=\iota_c$. The automorphism fixing $G$ pointwise and sending $c$ to $cg$ satisfies
\[
\tau_g x\tau_g^{-1}=x\iota_g\qquad(g\in G).
\]
Thus every finite-dimensional linear representation of $\Aut(Q)$ over any field maps $H_G$ to a virtually solvable group. Free-product normal form shows that $g\mapsto\iota_g$ is injective on $G$: a nonidentity element of $G$ does not commute with $c$. In particular, if $G$ is not virtually solvable, then $\Aut(G*\Z)$ has no faithful finite-dimensional representation over any field.
\end{remark}

The classical Formanek--Procesi theorem~\cite{FormanekProcesi} gives a nilpotent-by-abelian-by-finite restriction on proper-free-factor images. The following consequence strengthens that restriction for complex representations of $\Aut(F_n)$ when \rev{$n\geq5$}: the entire identity component of each proper-factor closure is unipotent. In particular, it eliminates the torus quotient permitted by the classical conclusion. The additional input is Proposition~\ref{prop:characteristic-solvable}, whose proof in~\cite{PrimitiveSpectra} uses property~(T).

\Needspace{12\baselineskip}
\begin{corollary}[A strengthening of Formanek--Procesi]\label{cor:aut-proper-unipotent}
Let \rev{$n\geq5$}, let $R:\Aut(F_n)\to\GL_d(\C)$ be a representation, and let $A<F_n$ be a proper free factor. Put
\[
H_A=\{R(\iota_a):a\in A\},\qquad K_A=\Zcl{H_A}.
\]
Then $K_A^\circ$ is unipotent. Thus $H_A$ is virtually unipotent and, in particular, virtually nilpotent. If $K_A$ is connected, then $H_A$ itself is unipotent. There is no restriction on $d$.
\end{corollary}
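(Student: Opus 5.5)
The plan is to reduce the statement to Theorem~\ref{thm:finite-orbit-applications}, applied to the representation $\rho(w)=R(\iota_w)$ of $F_n$. First I will check that $\rho$ is characteristic, hence has a finite (indeed trivial) $\Aut(F_n)$-orbit up to simultaneous conjugacy. By \eqref{eq:inner-equivariance}, for every $\alpha\in\Aut(F_n)$ and $w\in F_n$,
\[
\rho(\alpha(w))=R(\iota_{\alpha(w)})=R(\alpha)R(\iota_w)R(\alpha)^{-1}=S_\alpha\rho(w)S_\alpha^{-1},
\qquad S_\alpha=R(\alpha).
\]
So $\rho\circ\alpha$ is simultaneously conjugate to $\rho$, and $\rho$ is characteristic in the sense of Subsection~\ref{sec:characteristic-applications}. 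Since $w\mapsto\iota_w$ is a homomorphism, $\rho$ is a representation of $F_n$, and $\rho(A)=H_A$ for every free factor $A$.

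Since $n\geq5$, Theorem~\ref{thm:finite-orbit-applications} applies directly. It gives that $K_A^\circ=(\Zcl{\rho(A)})^\circ$ is unipotent for every proper free factor $A<F_n$, with no restriction on $d$. This is the step that carries all the real content. It rests on Theorem~\ref{thm:main}, through Theorem~\ref{thm:infinite-primitive-spectrum} or Theorem~\ref{thm:primitive-factors}(1), for virtual solvability of $\rho(A)$. It also rests on Proposition~\ref{prop:characteristic-solvable}, which upgrades virtual solvability to virtual unipotence using property~(T). That step is the main obstacle, but here it is quoted from \cite{PrimitiveSpectra}. If I had to say where it enters, the natural route is to apply Proposition~\ref{prop:characteristic-solvable} to a characteristic representation of a free group of rank $k\geq4$ built from the proper factor. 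This is why $n\geq5$ is needed.

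The remaining assertions are formal. The group $H_A\cap K_A^\circ$ has finite index in $H_A$ and lies in a connected unipotent algebraic group. By Lie--Kolchin (or Kolchin's theorem for unipotent groups) it is conjugate into the upper unitriangular group. Hence $H_A$ is virtually unipotent, and therefore virtually nilpotent, since the unitriangular group is nilpotent. If $K_A$ is connected, then $K_A=K_A^\circ$ is unipotent, so $H_A\subseteq K_A$ is itself unipotent.
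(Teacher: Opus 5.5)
Your proposal is correct, but it takes a more direct, more black-box route than the paper. You apply Theorem~\ref{thm:finite-orbit-applications} to the characteristic representation $w\mapsto R(\iota_w)$ of all of $F_n$. That gives unipotence of $K_A^\circ$ in one step, and it is legitimate: the paper itself announces the corollary as a consequence of that theorem.

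The paper's proof unpacks this instead.
\begin{itemize}
\item It enlarges $A$ to a free factor $B$ of rank $n-1$, with $F_n=B*\langle c\rangle$.
\item It gets virtual solvability of $R(\iota_B)$ from Corollary~\ref{thm:aut-nonlinearity}, hence from Theorem~\ref{thm:main}, which is proved in this paper.
\item It observes that $b\mapsto R(\iota_b)$ is characteristic as a representation of $B$. Every $\alpha\in\Aut(B)$ extends to some $\widehat\alpha\in\Aut(F_n)$ fixing $c$, and one takes $S_\alpha=R(\widehat\alpha)$.
\item Proposition~\ref{prop:characteristic-solvable}, applied at rank $n-1\geq4$, then makes $R(\iota_B)$ virtually unipotent, and hence also its subgroup $H_A$.
\end{itemize}

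This is exactly the step you left as speculation (``a characteristic representation of a free group of rank $k\geq4$ built from the proper factor''). The extension-of-automorphisms observation is the detail that makes it work.

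Your version is shorter, but it rests on the stronger external theorem covering finite orbits and arbitrary proper factors. The paper's version needs only one input from the companion paper, namely the property~(T) input of Proposition~\ref{prop:characteristic-solvable}. It also makes the threshold $n\geq5$ transparent as $\rank(B)\geq4$.

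Your final paragraph on the formal consequences is fine. The paper argues in the opposite direction, from virtual unipotence of $H_A$ to unipotence of $K_A^\circ$; both directions are standard.
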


\begin{proof}
Enlarge $A$ to a free factor $B$ of rank $n-1$, and write $F_n=B*\langle c\rangle$. \rev{Corollary~\ref{thm:aut-nonlinearity} shows that the representation $\rho:B\to\GL_d(\C)$, $\rho(b)=R(\iota_b)$, has virtually solvable image.} Every automorphism $\alpha$ of $B$ extends to an automorphism $\widehat\alpha$ of $F_n$ fixing $c$, and
\[
\rho(\alpha(b))=R(\widehat\alpha)\rho(b)R(\widehat\alpha)^{-1}.
\]
Thus $\rho$ is characteristic. Since \rev{$\rank(B)=n-1\geq4$}, Proposition~\ref{prop:characteristic-solvable} makes $\rho(B)$ virtually unipotent. The same holds for its subgroup $H_A$, giving the stated conclusion on $K_A^\circ$.
\end{proof}

\begin{remark}[The finite-index qualification is necessary]\label{rem:finite-image-obstruction}
Actual unipotence of $H_A$ cannot be asserted without an additional hypothesis, even for arbitrarily large $n$. Let $n\geq3$ and let $\Aut(F_n)$ act on the finite set $X=\Hom(F_n,S_3)$ by $\alpha\cdot\rho=\rho\circ\alpha^{-1}$. Let $R$ be the resulting permutation representation on $\C[X]$. For a basis $x_1,\ldots,x_n$, the inner automorphism $\iota_{x_1}$ interchanges the two points
\[
\bigl((12),(23),1,\ldots,1\bigr),\qquad
\bigl((12),(13),1,\ldots,1\bigr)
\]
of $X\simeq S_3^n$. The difference of their basis vectors is therefore an eigenvector of $R(\iota_{x_1})$ with eigenvalue $-1$. In particular, $R(\iota_{x_1})$ is not unipotent, and neither is the image of the proper free factor $\langle x_1,\ldots,x_{n-1}\rangle$ under inner automorphisms. This finite-image example is compatible with virtual unipotence.
\end{remark}

\section*{Acknowledgments}
We acknowledge the assistance of ChatGPT-6 Astra in mathematical discussions and in the preparation and revision of this manuscript.

\clearpage
\begin{appendices}
\section{Examples and sharpness}\label{sec:examples}

All examples in this appendix are over $\C$. They exhibit constant-spectrum cosets with unipotent, toral, and nonnilpotent solvable subgroups. In particular, the virtual-solvability conclusion of Theorem~\ref{thm:main} cannot be strengthened to virtual nilpotence.

\begin{example}[Unitriangular subgroups and varying Jordan type]\label{ex:unitriangular}
Let $U_d$ be the upper unitriangular subgroup of $\GL_d(\C)$, and let
\[
D=\diag(\lambda_1,\ldots,\lambda_d),\qquad \lambda_i\in\C^*.
\]
Every $Du$, with $u\in U_d$, is upper triangular with diagonal entries $\lambda_1,\ldots,\lambda_d$. Hence
\[
\chi_{Du}(z)=\prod_{i=1}^d(z-\lambda_i).
\]
If the $\lambda_i$ are pairwise distinct, every element of $DU_d$ is diagonalizable and conjugate to $D$. For $d\geq3$, the subgroup $U_d$ is nonabelian; its nilpotency class is $d-1$. Thus even a coset in a single regular semisimple conjugacy class can have a subgroup of arbitrarily large nilpotency class.

At the other extreme, take $D=I$. The coset $U_d$ still has constant characteristic polynomial, but it meets every unipotent conjugacy class: the Jordan matrix associated with any partition of $d$ lies in $U_d$. This illustrates the distinction between a characteristic-polynomial fiber and a single conjugacy class.
\end{example}

\begin{example}[A torus of dimension $d-1$]\label{ex:cyclic-torus}
Let $d\geq2$, let
\[
T=\{\diag(t_1,\ldots,t_d):t_i\in\C^*,\ t_1\cdots t_d=1\},
\]
and let $P$ be the permutation matrix defined by $Pe_i=e_{i+1}$, with indices taken modulo $d$. For $t=\diag(t_1,\ldots,t_d)\in T$, the matrix $Pt$ acts by $e_i\mapsto t_i e_{i+1}$. Thus $(Pt)^d=I$, and $e_1$ is a cyclic vector. It follows that
\[
\chi_{Pt}(z)=z^d-1.
\]
The roots are distinct, so the entire coset $PT$ is conjugate to $P$. In particular, the subgroup in Theorem~\ref{thm:main} need not be unipotent.

The torus dimension $d-1$ is maximal: constant characteristic polynomial on $xH$ implies $\det(h)=1$ for every $h\in H$, and every torus in $\SL_d(\C)$ has dimension at most $d-1$.
\end{example}

\begin{example}[A torus coset of regular unipotent matrices]\label{ex:unipotent-torus}
This is a version of Larsen's example in \cite[Example~9.11]{Gelander}. Set
\[
J=\begin{pmatrix}0&1&0\\-3&0&1\\-8&0&3\end{pmatrix},
\qquad
T=\{D(t)=\diag(t,t^{-1},1):t\in\C^*\}.
\]
A direct determinant calculation gives
\[
\chi_{JD(t)}(z)
=\det\begin{pmatrix}z&-t^{-1}&0\\3t&z&-1\\8t&0&z-3\end{pmatrix}
=z^3-3z^2+3z-1=(z-1)^3.
\]
In fact, with $S(t)=\diag(1,t,t)$ we have the explicit identity
\begin{equation}\label{eq:unipotent-torus-conjugacy}
JD(t)=S(t)JS(t)^{-1}.
\end{equation}
Since $(J-I)^2\neq0$, the matrix $J$ is regular unipotent. Thus $JT$ lies in a single regular unipotent conjugacy class in $\SL_3(\C)$, although $T$ is a nontrivial torus. The conjugating matrix in \eqref{eq:unipotent-torus-conjugacy} can be multiplied by a scalar to have determinant one, without changing the conjugation.

Dimension three is minimal for this phenomenon. Indeed, if a nontrivial torus $T'\leq\GL_2(\C)$ had a unipotent coset $xT'$, then $\det(T')=1$. After simultaneous conjugation, $T'=\{\diag(t,t^{-1}):t\in\C^*\}$. Writing $x=(x_{ij})$, unipotence would give
\[
x_{11}t+x_{22}t^{-1}=2\qquad\text{for every }t\in\C^*,
\]
which is impossible. The one-dimensional case is immediate.
\end{example}

\begin{example}[Solvable but not virtually nilpotent]\label{ex:nonnilpotent}
Write $D_2(t)=\diag(t,t^{-1})$ and put
\[
H=\left\{
\begin{pmatrix}D_2(t)&B\\0&D_2(s)\end{pmatrix}:
t,s\in\C^*,\ B\in M_2(\C)
\right\}\leq\SL_4(\C),
\qquad
w=\begin{pmatrix}0&-1\\1&0\end{pmatrix}.
\]
This is a connected solvable algebraic group, with an abelian normal unipotent subgroup parametrized by $B$ and quotient $(\C^*)^2$. Take $x=\diag(w,2w)$. Since $\chi_{wD_2(t)}(z)=z^2+1$, block triangularity gives
\[
\chi_{xh}(z)=(z^2+1)(z^2+4)\qquad(h\in H).
\]
The four roots are distinct, so $xH$ is contained in the conjugacy class of $x$.

Nevertheless, $H$ is not virtually nilpotent. To see this directly, let
\[
a=\diag(2,1/2,1,1),\qquad n(v)=I_4+vE_{13}.
\]
Both lie in $H$, and $a n(v)a^{-1}=n(2v)$. If $L\leq H$ has finite index, some positive powers $a^k$ and $n(1)^\ell=n(\ell)$ lie in $L$. With the convention $[b,c]=bcb^{-1}c^{-1}$, successive commutators satisfy
\[
\underbrace{[a^k,[a^k,\ldots,[a^k,n(\ell)]]\ldots]}_{r\text{ occurrences of }a^k}
=n\bigl((2^k-1)^r\ell\bigr)\neq I_4
\qquad(r\geq1).
\]
Thus $L$ is not nilpotent. This shows that virtual solvability in Theorem~\ref{thm:main} cannot be strengthened to virtual nilpotence, even for connected $H$ and a coset in one regular semisimple conjugacy class.
\end{example}

\end{appendices}

\end{document}